\newtheorem{theorem}{Theorem}[section]
\newtheorem{lemma}[theorem]{Lemma}
\newtheorem{proposition}[theorem]{Proposition}
\newtheorem{corollary}[theorem]{Corollary}
\newtheorem{conjecture}[theorem]{Conjecture}
\theoremstyle{definition}
\newtheorem{definition}[theorem]{Definition}
\newtheorem{example}[theorem]{Example}
\newtheorem{remark}[theorem]{Remark}
\newtheorem{question}[theorem]{Question}
\def\Susp{\sum}
\def\lk{\mathrm{lk}}
\title{Homotopy type of circle graphs complexes motivated by extreme Khovanov homology}
\author{Jozef H. Przytycki}
\author{Marithania Silvero}						
\begin{document}
\maketitle
\markboth{\hfil{\sc Homotopy type of circle graphs motivated by extreme Khovanov homology}\hfil}
\

\begin{center}
\textbf{Abstract}
\end{center}
\vspace{0.2cm}
It was proven in \cite{GMS} that the extreme Khovanov homology of a link diagram is isomorphic to the reduced (co)homology of the independence simplicial complex obtained from a bipartite circle graph constructed from the diagram. In this paper we conjecture that this simplicial complex is always homotopy equivalent to a wedge of spheres. In particular, its homotopy type, if not contractible, would be a link invariant and it would imply that the extreme Khovanov homology of any link diagram does not contain torsion. We prove the conjecture in many special cases and find it convincing to generalize it to every circle graph (intersection graph of chords in a circle). In particular, we prove it for the families of cactus, outerplanar, permutation and non-nested graphs. Conversely, we also give a method for constructing a permutation graph whose independence simplicial complex is homotopy equivalent to any given finite wedge of spheres. We also present some combinatorial results on the homotopy type of finite simplicial complexes and a theorem generalizing previous results by Csorba, Nagel and Reiner, Jonsson and Barmak. We study the implications of our results to Knot Theory; more precisely, we compute the real-extreme Khovanov homology of torus links $T(3,q)$ and obtain examples of $H$-thick knots whose extreme Khovanov homology groups are separated either by one or two gaps as long as desired.

\tableofcontents


\section{Introduction}

This paper is motivated by a result in \cite{GMS} where the authors showed that the extreme Khovanov homology associated to a link diagram is isomorphic to the (co)homology of the independence simplicial complex of a special graph constructed from the diagram (the so-called Lando graph). In this paper we conjecture that this simplicial complex is homotopy equivalent to a wedge of spheres. More generally, we state the following: \\

\noindent\textbf{Conjecture}
\begin{enumerate}
\item[(1)] The independence simplicial complex associated to a circle graph is homotopy equivalent to a wedge of spheres.
\item[(2)] In particular, the independence simplicial complex associated to a bipartite circle graph (Lando graph) is homotopy equivalent to a wedge of spheres.
\item[(3)] In particular, the extreme Khovanov homology of any link diagram is torsion-free. \\
\end{enumerate}

The independence complex of a graph belonging to the families of paths, trees and cycle graphs is known to be homotopy equivalent to a wedge of spheres \cite{kozlov}. We give support to the above conjecture by extending these results to many special cases, e.g. cactus, outerplanar, permutation and non-nested graphs. Conversely, we also show that the homotopy type of any finite connected wedge of spheres can be realized as the independence complex of a circle graph.

The results in this paper, which has been conceived from a combinatorial point of view, have several implications in Knot Theory. More precisely, we study the extreme Khovanov homology of torus links, and show how the study of the homotopy type of Khovanov homology leads to new results and conjectures. These ideas were well expressed by B. Everitt and P. Turner:\emph{``Another approach is to interpret the existing constructions of Khovanov homology in homotopy theoretic terms. By placing the constructions into a homotopy setting one makes Khovanov homology amenable to the methods and techniques of homotopy theory''} \cite{EveritTurner}.

We also noticed connections between our work involving circle graphs and pseudo-knots (planar and spacial) formed by a secondary structure of RNA. Despite being worth of careful exploring, this topic lies outside the scope of this paper (see, for example, \cite{Pseud1, Pseud2} for definitions of pseudo-knots and their relation to chord diagrams).

The plan of the paper is as follows. In the second section we review basic definitions and present the conjecture we will deal with throughout the paper. In the third section we describe the classical idea of building the simplicial complex ``cone by cone" (in essence the cell decomposition of the complex), and present the first results involving the families of cactus and outerplanar graphs. In Sections 4 and 5 we prove our conjecture for the family of permutation graphs and non-nested circle graphs, respectively. In the sixth section we prove a general theorem on independence complexes which generalizes results by Csorba, Nagel and Reiner, Jonsson, and Barmak. Finally, in Section 7 we show some applications of our work to Knot Theory, namely we compute the extreme Khovanov homology of torus links $T(3,q)$ and construct two families of $H$-thick knots having two and three non-trivial extreme Khovanov homology groups separated by gaps as long as desired.

\section{Preliminaries} \label{Prel}

In this section we review some well-known concepts and provide basic tools that will be useful throughout this paper. We based our exposition on \cite{Brown}, \cite{Chmutbook}, \cite{Hatcher} and \cite{Jonssonbook}.

\subsection{Wedges, joins and independence simplicial complexes}

\begin{definition}
An abstract simplicial complex ${\mathcal K} = (V,P)$ consists of a pair of sets $V = V({\mathcal K})$ and $P({\mathcal K}) = P \subset 2^V$, called set of vertices and set of simplexes of ${\mathcal K}$ respectively. The elements of $P$ are finite subsets of $V$, include all one-element subsets, and if $s' \subset s \in P$ then also $s'\in P$ (that is, a subsimplex of a simplex is a simplex). A simplex containing $n+1$ vertices is an $n$-dimensional simplex, or succinctly, $n$-simplex. We define $dim ({\mathcal K})$ as the maximal dimension of a simplex in ${\mathcal K}$ (may be $\infty$ if there is no bound). A simplicial complex is called a {\it flag complex} if it has the property that every pairwise connected set of vertices forms a simplex.
\end{definition}

In this paper we work exclusively with finite simplicial complexes and assume that they contain the empty set as a simplex of dimension -1. A simplicial complex has a natural geometric realization and we often use both topological and combinatorial languages with no distinction. Moreover, to avoid cumbersome notation, we will refer to a simplicial complex as its set of maximal simplexes (facets), and write $v_0 v_1 \ldots v_n$ instead of $\{v_0, v_1, \ldots, v_n\}$ for each simplex. For example, Figure \ref{mhexagon} shows the geometric realization of the abstract $2$-dimensional simplicial complex $\mathcal{K} = \{14, 25, 36, 135, 246\}$.

Given a graph $G$, we define its independence simplicial complex $I_G$ first for a loopless graph and then we extend the definition for the case of graphs with loops. This extended definition allows a shorter description in some of the statements of the following sections.

\begin{figure}
\centering
\includegraphics[width = 8.5cm]{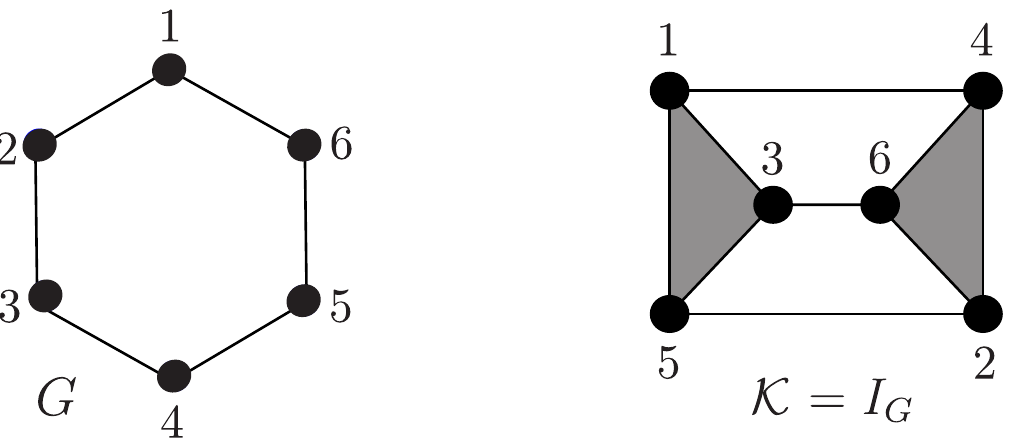}
\caption{\small{The cycle graph $G = C_6$ and its associated independence simplicial complex $\mathcal{K} = I_G$.}}
\label{mhexagon}
\end{figure}

\begin{definition} \label{defindep}
\begin{enumerate}
\item[(1)] Let $G$ be a loopless graph. Then its independence simplicial complex $I_G$ is a complex whose set of vertices is the same as the set of vertices of $G$ and $\sigma = (v_0 v_1 \ldots v_n)$ is a simplex in $I_G$ if and only if the vertices $v_0, v_1, \ldots, v_n$ are independent in $G$ (that is, there are not edges in $G$ between these vertices). Notice that the 1-skeleton of  $I_G$ is $G^c$, the complement graph of $G$; then $I_G$ can also be thought as the simplicial complex of cliques in $G^c$.
\item[(2)] Let $e$ be a loop in the graph $G$ based in the vertex $v$. Then we define $I_G = I_{G-v}$. This definition is justified by the fact that $v$ is connected by an edge with itself in $G$, hence it is not independent and it cannot be a vertex of any simplex in $I_G$.
\end{enumerate}
\end{definition}

Note that it follows from Definition \ref{defindep} that $I_G$ is a flag complex for any graph $G$. Figure \ref{mhexagon} shows a hexagon and its associated independence simplicial complex, which is homotopy equivalent (denoted by $\sim_h$) to the wedge of $S^1$ and $S^1$ (see next definition).

\begin{definition} \label{defwedge} Let $\mathcal{K}_1$ and $\mathcal{K}_2$ be two simplicial complexes.
\begin{enumerate}
\item[(1)] Let $v_i \in V(\mathcal{K}_i)$ be a distinguished vertex called base point, for $i = 1,2$. The wedge (product) of $K_1$ and $K_2$, $(K_1,v_1)\vee (K_2,v_2)$, is a simplicial complex obtained by identifying $v_1$ and $v_2$. The homotopy type of the resulting simplicial complex is preserved under change of the base points $v_1$ and $v_2$ inside the connected components in the original complexes. In particular, for connected simplicial complexes the wedge is well defined up to homotopy equivalence without specifying base points.
\item[(2)] The join of $\mathcal{K}_1$ and $\mathcal{K}_2$, $\mathcal{K}_1 * \mathcal{K}_2$, is a simplicial complex with vertex set $V(\mathcal{K}_1*\mathcal{K}_2) = V(\mathcal{K}_1)\sqcup V(\mathcal{K}_2)$ and simplexes $s \in P(\mathcal{K}_1*\mathcal{K}_2)$, with $s = s_1\sqcup s_2$ for $s_i \in P(\mathcal{K}_i)$.
\item[(3)] If $V(\mathcal{K}_2) = v$, then the join $\mathcal{K}_1 * v$ is called a cone with apex $v$ and base $\mathcal{K}_1$ (sometimes denoted by $cone(v, \mathcal{K}_1)$).
\item[(4)] If $\mathcal{K}_2 = S^0$ (a complex whose topological realization consists of two isolated vertices), then $\mathcal{K}_1 * \mathcal{K}_2$ is called \emph{suspension} of $\mathcal{K}_1$ and denoted by $\Sigma \mathcal{K}_1$. \end{enumerate}
\end{definition}

\begin{remark}\label{remm}
It follows from Definition \ref{defwedge} that the independence complex of the disjoint union of two graphs is equal to the join of their independence simplicial complexes, that is, $$I_{G_1 \sqcup G_2} = I_{G_1} * I_{G_2}.$$
\end{remark}

Next we list several basic properties of wedge and join operations. In particular, we notice that the set of connected finite simplicial complexes (up to homotopy equivalence) with operations $\vee$ and $*$ form, after some modifications, a commutative semiring (compare \cite{Gla}).\footnote{Recall that a semiring
is a set $X$ with two binary operations $+$ and $\cdot$ and two constants $0$ and $1$ such that:\\
(1) $(X,+,0)$ is a commutative monoid with neutral element $0$,\\
(2) $(X,\, \cdot\, , 1)$ is a  monoid  with neutral element $1$, \\
(3) Multiplication is distributive with respect to addition, that is $(a+b) \cdot c = (a \cdot c) + (b \cdot c)$ and $c \cdot (a+b) = (c \cdot a) + (c \cdot b)$,\\
(4) $0\cdot a = 0 = a \cdot 0$.}

\begin{proposition} \label{propwedge} \
\begin{enumerate}
\item [(1)] $S^m*S^n= S^{m+n+1}$, where $S^k$ is the sphere of dimension $k$.
\item [(2)] Let $\mathcal{K}_i \sim_h \mathcal{K}_i'$ for $i=1,2$. Then $\mathcal{K}_1 * \mathcal{K}_2 \sim_h \mathcal{K}_1' * \mathcal{K}_2'$ and  $\mathcal{K}_1 \vee \mathcal{K}_2 \sim_h \mathcal{K}_1' \vee \mathcal{K}_2'$ (basepoints should be chosen coherently).
\item [(3)] $(\mathfrak{K},*)$ is a commutative monoid. Here $\mathfrak{K}$ is a set of finite simplicial complexes with neutral element $\emptyset$ ($\mathcal{K} * \emptyset = \mathcal{K} = \emptyset * \mathcal{K}$). Its elements are considered up to homotopy equivalence. In light of $(1)$ the empty set can be called \emph{a sphere of dimension $-1$} and denoted by $S^{-1}$, thus $\Sigma \emptyset = S^0$. Notice that if $\mathcal{K}$ is not empty, then $\Sigma \mathcal{K}$ is a connected simplicial complex.
\item [(4)] For connected simplicial complexes, $(\mathfrak{K}-\emptyset,\vee)$ is a commutative monoid with a one-element simplicial complex $b$ as the neutral element. We can extend $({\mathfrak K}- \emptyset,\vee)$ to $(\hat{\mathfrak K},\vee)$ by allowing, formally, wedge of any simplicial complex with empty sets:  $\mathcal{K}\vee\emptyset \vee \ldots\vee \emptyset$.
\item [(5)]  For connected simplicial complexes, $*$ is distributive with respect to $\vee$: $$(\mathcal{K}_1\vee \mathcal{K}_2)*K_3 \sim_h (\mathcal{K}_1*\mathcal{K}_3)\vee (\mathcal{K}_2*\mathcal{K}_3).$$
\item [(6)] If $b$ is the one-element simplicial complex, then for any simplicial complex $\mathcal{K}$ it holds that $\mathcal{K}*b$ is contractible ($\mathcal{K}*b\sim_h b$).
\end{enumerate}

By properties (2)-(6) we conclude that $(\hat{\mathfrak{K}},\vee,*)$ is a commutative semiring.

\begin{enumerate}
\item[(7)] The suspension of a wedge of spheres is homotopy equivalent to a wedge of spheres:
$$\Susp\left( S^{i_1}\vee S^{i_2} \vee \ldots \vee S^{i_n}\right) \sim_h S^{i_1+1}\vee S^{i_2+1} \vee \ldots \vee S^{i_n+1}.$$
\item[(8)] For any simplicial complexes $\mathcal{K}_1, \mathcal{K}_2$ it holds $$\Susp (\mathcal{K}_1\times \mathcal{K}_2)\sim_h \mathcal{K}_1*\mathcal{K}_2 \vee \Susp \mathcal{K}_1 \vee \Susp \mathcal{K}_2.$$ In particular, for a torus $T^2=S^1\times S^1$ it follows $\Sigma T^2 \sim_h S^3\vee S^2 \vee S^2$. This example is important for us because $T^2$ is not a wedge of spheres but its suspension is so.\footnote{It is worth here to mention a classical Theorem of Cannon \cite{Can}, stating that the double suspension of a 3-dimensional homological sphere, e.g. Poincar{\'e} sphere, is $S^5$.}
\end{enumerate}
\end{proposition}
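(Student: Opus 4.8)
The plan is to reduce all eight statements to two standard homotopy-theoretic facts about finite CW complexes (see \cite{Hatcher}): the \emph{join--smash relation} $\mathcal{K}_1 * \mathcal{K}_2 \sim_h \Sigma(\mathcal{K}_1 \wedge \mathcal{K}_2)$, where $\mathcal{K}_1 \wedge \mathcal{K}_2 = (\mathcal{K}_1\times\mathcal{K}_2)/(\mathcal{K}_1\vee\mathcal{K}_2)$ is the smash product, and the \emph{stable splitting of a product} $\Sigma(X\times Y) \sim_h \Sigma X \vee \Sigma Y \vee \Sigma(X\wedge Y)$. These will be combined with the elementary identities $\Sigma X = S^0 * X$ (Definition \ref{defwedge}(4)), $\Sigma(X\vee Y) \sim_h \Sigma X\vee\Sigma Y$, $(X\vee Y)\wedge Z = (X\wedge Z)\vee(Y\wedge Z)$ and $\Sigma S^k = S^{k+1}$. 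Since every complex in sight is finite, we freely identify a simplicial complex with its geometric realization, and a choice of vertex as base point makes everything well pointed, so that $\vee$, $\wedge$, $*$ and $\Sigma$ are homotopy functorial.

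First I would settle the purely combinatorial claims. Associativity and commutativity of $*$ are immediate from the symmetric and associative description $s = s_1\sqcup s_2$ of the simplexes of a join, and $\mathcal{K}*\emptyset = \mathcal{K}$ because $P(\emptyset) = \{\emptyset\}$ forces $s_2 = \emptyset$; this is (3). For (4), wedging with the one-point complex $b$ changes nothing up to collapsing $b$, while associativity and commutativity of $\vee$ are clear once base points may move within connected components (Definition \ref{defwedge}(1)); the passage to $\hat{\mathfrak K}$ is purely formal. Statement (6) is the classical contractibility of a cone $\mathcal{K}*b$ (linear homotopy to the apex). For (1), $S^m$ is the $(m+1)$-fold join of $S^0$, so by associativity of $*$ the complex $S^m * S^n$ is the $(m+n+2)$-fold join of $S^0$, i.e.\ $S^{m+n+1}$; the case $m=0$ records $\Sigma S^k = S^{k+1}$. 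Statement (2) is homotopy invariance of $\vee$ (standard for well-pointed CW complexes) and of $*$ (apply the join--smash relation and homotopy invariance of $\Sigma$ and $\wedge$).

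The remaining formal claims now follow quickly. For (5),
\begin{align*}
(\mathcal{K}_1\vee\mathcal{K}_2)*\mathcal{K}_3 &\sim_h \Sigma\bigl((\mathcal{K}_1\vee\mathcal{K}_2)\wedge\mathcal{K}_3\bigr) \sim_h \Sigma\bigl((\mathcal{K}_1\wedge\mathcal{K}_3)\vee(\mathcal{K}_2\wedge\mathcal{K}_3)\bigr) \\
&\sim_h \Sigma(\mathcal{K}_1\wedge\mathcal{K}_3)\vee\Sigma(\mathcal{K}_2\wedge\mathcal{K}_3) \sim_h (\mathcal{K}_1*\mathcal{K}_3)\vee(\mathcal{K}_2*\mathcal{K}_3),
\end{align*}
and (7) is the special case $\mathcal{K}_3 = S^0$ iterated over a wedge of spheres, using $\Sigma S^{i_j} = S^{i_j+1}$ from (1). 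The semiring assertion is then bookkeeping: (3) and (4) supply the two commutative monoids, (5) is distributivity, and the axiom $0\cdot a = 0$ — with $0$ the one-point complex $b$ and $1 = \emptyset$ — is exactly (6).

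The single genuine ingredient is (8). I would obtain the splitting $\Sigma(X\times Y)\sim_h\Sigma X\vee\Sigma Y\vee\Sigma(X\wedge Y)$ from the Puppe cofibre sequence $X\vee Y\hookrightarrow X\times Y\to X\wedge Y$: after one suspension the inclusion $\Sigma(X\vee Y)\to\Sigma(X\times Y)$ admits a retraction, obtained by adding the maps induced by the two projections $p_X,p_Y$ using the co-group structure carried by the suspension $\Sigma(X\times Y)$; hence the suspended cofibre sequence splits and $\Sigma(X\times Y)\sim_h\Sigma(X\vee Y)\vee\Sigma(X\wedge Y) = \Sigma X\vee\Sigma Y\vee\Sigma(X\wedge Y)$. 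Rewriting $\Sigma(X\wedge Y)$ as $X*Y$ via the join--smash relation gives the stated identity, and the torus example follows by taking $X=Y=S^1$: $\Sigma T^2 \sim_h S^1*S^1\vee\Sigma S^1\vee\Sigma S^1 \sim_h S^3\vee S^2\vee S^2$, using (1). I expect (8) to be the main obstacle: it is the only part not reducible to a definitional check or a one-line manipulation with $*,\vee,\wedge,\Sigma$, resting instead on the retraction-after-suspension argument (the classical James-type splitting of $\Sigma(X\times Y)$).
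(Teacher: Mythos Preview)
The paper does not actually prove Proposition~\ref{propwedge}: it is stated as a list of standard background facts (the exposition of Section~\ref{Prel} is explicitly ``based on \cite{Brown}, \cite{Chmutbook}, \cite{Hatcher} and \cite{Jonssonbook}''), and the text moves directly from the statement to Subsection~\ref{seccone} with no proof environment in between. So there is nothing to compare your argument against.

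That said, your write-up is correct and is exactly the standard route one would take to justify these facts. The reduction of (1), (2), (5), (7) to the join--smash relation $X*Y \sim_h \Sigma(X\wedge Y)$ together with the distributivity of $\wedge$ over $\vee$ is the usual argument, and your treatment of (8) via the cofibre sequence $X\vee Y \hookrightarrow X\times Y \to X\wedge Y$ and the retraction built from the co-$H$ structure on $\Sigma(X\times Y)$ is precisely the classical James splitting (cf.\ \cite[Section~4.I, in particular Proposition~4I.1 and the surrounding discussion]{Hatcher}). Your identification of the semiring data --- additive neutral $b$, multiplicative neutral $\emptyset$, and the absorbing property $b * \mathcal{K} \sim_h b$ from (6) --- matches what the paper intends. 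In short: your proof is fine; the paper simply takes all of this for granted.
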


\subsection{Cone construction}\label{seccone}

Given a simplicial complex $\mathcal{K} = (V,P)$, for each vertex $v \in V$ we define
$$
\begin{aligned}
lk_\mathcal{K}(v)&: =\{ \sigma - v : v \in \sigma \in P\},\\
st_\mathcal{K}(v)&: = \{\sigma \in P\ | \ \sigma \cup v \in P\}.
\end{aligned}
$$

It follows from definitions
$$
\mathcal{K} \, = \, (\mathcal{K}-v) \cup cone (v, lk_\mathcal{K}(v)) \, = \, (\mathcal{K}-v) \cup st_\mathcal{K}(v)
$$

Hence, if the inclusion $\lk_\mathcal{K}(v) \hookrightarrow \mathcal{K} - v$ is null-homotopic, then
$$
\mathcal{K} \sim_h (\mathcal{K}-v) \vee \Susp \lk_\mathcal{K}(v).
$$

By considering the particular case $\mathcal{K} = I_G$, we get the following result.

\begin{proposition}\label{Reiner}
For any graph $G$
$$I_G \sim_h  I_{G - v} \cup (v * I_{G-st_G(v)}).\footnote{This formula can be interpreted as a lift of a skein relation at the crossing leading to the vertex $v$ (we review the relation vertex-crossing in Subsection \ref{subseclando}). We do not use this observation in the paper but one should keep it in mind together with the reflection of Everitt and Turner cited in the Introduction.}$$
Moreover, if $I_{G-st_G(v)}$ is contractible in $I_{G-v}$ then
$$I_G \sim_h  I_{G - v} \vee \Susp I_{G-st_G(v)}.$$
\end{proposition}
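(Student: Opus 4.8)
The plan is to read this statement off the general cone decomposition $\mathcal{K}=(\mathcal{K}-v)\cup st_{\mathcal{K}}(v)$ recalled just above, specialized to $\mathcal{K}=I_G$; the whole content is then to re-express $I_G-v$, $\lk_{I_G}(v)$ and $st_{I_G}(v)$ in purely graph-theoretic terms. Throughout I would assume that $v$ carries no loop, so that $v$ is genuinely a vertex of $I_G$ and the decomposition at $v$ makes sense; the loop case is disposed of directly from Definition \ref{defindep}(2), where $I_G=I_{G-v}$ and the cone summand $v*I_{G-st_G(v)}$ is empty by convention (a loop vertex spans no simplex), so both formulas hold trivially. This is the one place where the bookkeeping has to be spelled out with care.

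First I would verify the two identifications on which everything rests. A subset $\sigma\subseteq V\setminus\{v\}$ is a simplex of $I_G-v$ exactly when it is independent in $G$, and since $\sigma$ avoids $v$ this is equivalent to being independent in $G-v$; hence $I_G-v=I_{G-v}$ as simplicial complexes. Similarly, for $\sigma\subseteq V\setminus\{v\}$ the set $\sigma\cup\{v\}$ is independent in $G$ if and only if $\sigma$ is independent and contains no neighbor of $v$, i.e. if and only if $\sigma$ is an independent subset of $V\setminus st_G(v)$; thus $\lk_{I_G}(v)=I_{G-st_G(v)}$, and therefore $st_{I_G}(v)=v*\lk_{I_G}(v)=v*I_{G-st_G(v)}$. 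Substituting these into $\mathcal{K}=(\mathcal{K}-v)\cup st_{\mathcal{K}}(v)$ gives $I_G=I_{G-v}\cup(v*I_{G-st_G(v)})$, in fact an equality of complexes, which a fortiori yields the stated homotopy equivalence.

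For the ``moreover'' clause, I would note that the hypothesis ``$I_{G-st_G(v)}$ is contractible inside $I_{G-v}$'' says precisely, under the identifications just made, that the inclusion $\lk_{I_G}(v)\hookrightarrow I_G-v$ is null-homotopic. The consequence of the cone decomposition already displayed above the proposition, namely $\mathcal{K}\sim_h(\mathcal{K}-v)\vee\Susp\lk_{\mathcal{K}}(v)$ whenever that inclusion is null-homotopic, then immediately gives $I_G\sim_h I_{G-v}\vee\Susp I_{G-st_G(v)}$. The only genuinely topological ingredient is the standard fact underlying that display -- that the mapping cone of a null-homotopic map $f\colon X\to Y$ is homotopy equivalent to $Y\vee\Susp X$, applied here to $f\colon\lk_{I_G}(v)\hookrightarrow I_G-v$, whose mapping cone is $(I_G-v)\cup cone(v,\lk_{I_G}(v))=I_G$ -- and since this has already been quoted, the proof reduces to the combinatorial bookkeeping of the previous paragraph. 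Accordingly I expect no real obstacle beyond making the loop convention of Definition \ref{defindep}(2) compatible with the literal reading of the symbol $v*I_{G-st_G(v)}$.
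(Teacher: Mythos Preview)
Your proof is correct and follows essentially the same route as the paper's: the paper's one-line argument is precisely that $I_G-v=I_{G-v}$ and $\lk_{I_G}(v)=I_{G-st_G(v)}$, after which the result is read off from the cone decomposition displayed in Subsection~\ref{seccone}. You have simply fleshed out the verification of those two identifications and added a careful treatment of the loop case, which the paper leaves implicit.
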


\begin{proof}
The proof follows from the facts that $I_G - v = I_{G-v}$ and $lk_{I_G}(v) = I_{G-st_G(v)}$.
\end{proof}

\begin{figure}
\centering
\includegraphics[width = 12cm]{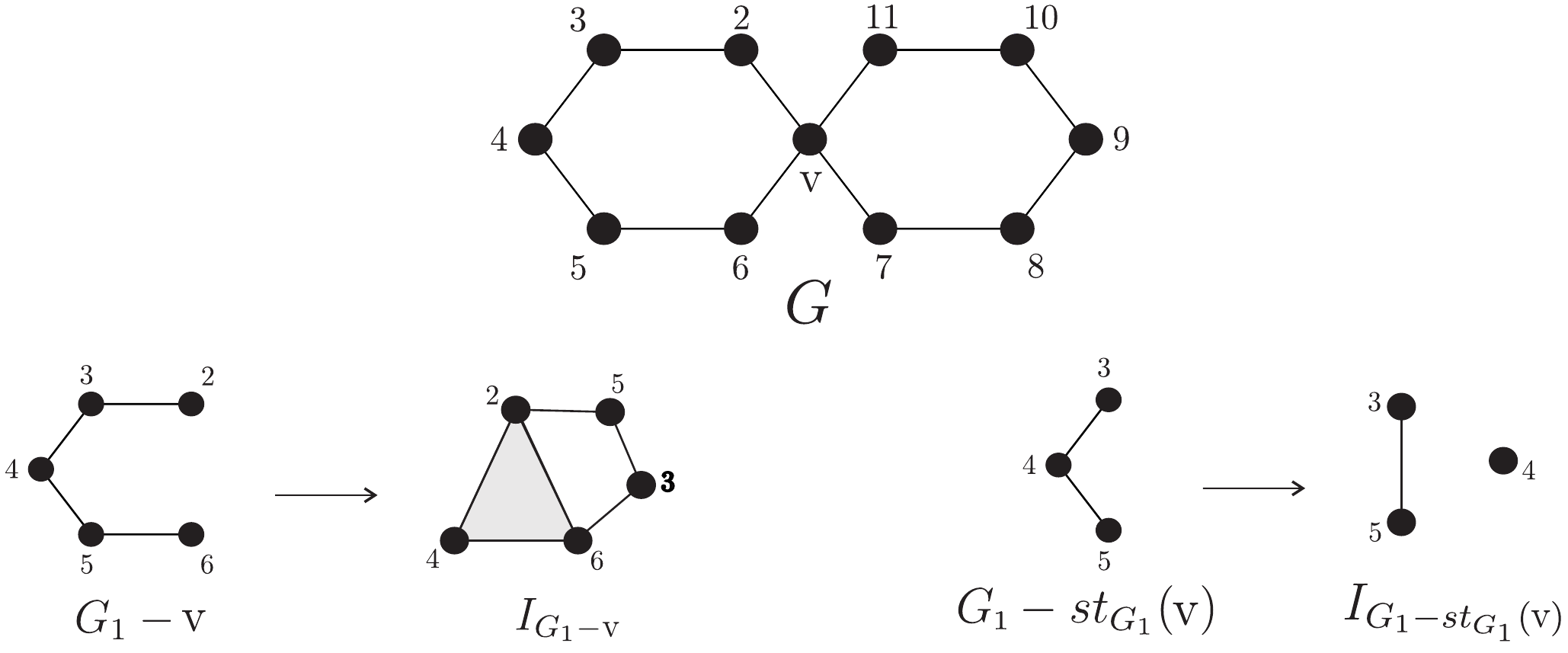}
\caption{\small{The graph $G$, the subgraphs $G_1 - v$, $G_1 - st_{G_1}(v)$ and the topological realization of the complexes $I_{G_1-v}$ and $I_{G_1 - st_{G_1}(v)}$ are shown, illustrating Example \ref{exwedge}. For $G_2$ the computations are analogous.}}
\label{mwedgehex}
\end{figure}

\begin{example}\label{exwedge}
Consider the circle graph $G = G_1 \vee G_2$, where $G_1 = G_2$ are six-cycles (Figure \ref{mwedgehex}). Let $v$ be the wedge vertex identifying base points in the original hexagons. On the one hand, $G-v$ is the disjoint union of two paths of length four, $L_4^1 \sqcup L_4^2$. From Figure \ref{mwedgehex} it is clear that $I_{L_4} \sim_h S^1$. Hence, by Remark \ref{remm}, $I_{G-v}$ is homotopy equivalent to $S^1 * S^1 = S^3$. On the other hand, since $G-st_G(v)$ is the disjoint union of two paths of length two, $I_{G-st_G(v)}$ is homotopy equivalent to $S^0 * S^0 = S^1$. Consequently, $I_{G-st_G(v)}$ is contractible in $I_{G-v}$ and by Proposition \ref{Reiner} $$I_G \sim_h S^3 \vee \Susp S^1 \sim_h S^3 \vee S^2.$$
\end{example}

\subsection{Chord diagrams and circle graphs}

\begin{definition} \
\begin{enumerate}
  \item [(1)] A chord diagram $\mathcal{C}$ is a circle together with a finite set of chords with disjoint boundary points.
  \item [(2)] The circle graph $G$ associated to the chord diagram $\mathcal{C}$ is the intersection graph of its chords, that is, the simple graph constructed by associating a vertex to each chord in $\mathcal{C}$ and connecting two vertices by an edge in $G$ if the corresponding chords intersect. To avoid cumbersome notation, we keep the same name for both the vertex in the graph and its associated chord.
  \item [(3)] A bipartite circle graph is called Lando graph. The chords belonging to a chord diagram leading to a Lando graph can be partitioned into two parts, one of them placed inside and the other one outside the circle, so there are no intersections between the chords. See $\mathcal{C}'$ in Figure \ref{hexagon}.
\end{enumerate}
\end{definition}

\begin{figure}
\centering
\includegraphics[width = 8.5cm]{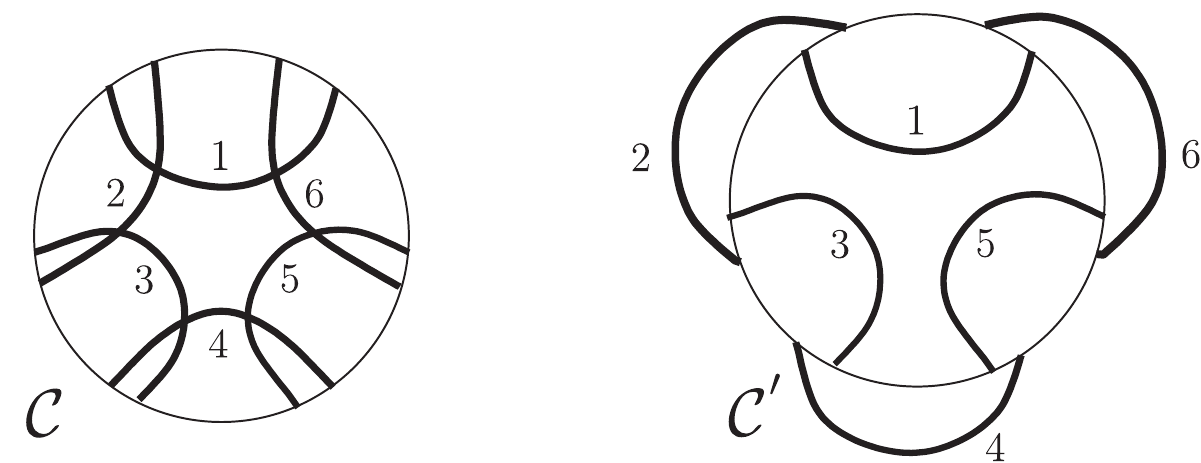}
\caption{\small{Two ways of describing a bipartite chord diagram, $\mathcal{C} \equiv \mathcal{C}'$.}}
\label{hexagon}
\end{figure}

The hexagon in Figure \ref{mhexagon} is the Lando graph associated to the chord diagrams depicted in Figure \ref{hexagon}.

Different chord diagrams may lead to the same circle graph.\footnote{Chmutov and Lando proved that two chord diagrams have the same circle graph if and only if they are related by a sequence of elementary modifications called {\it mutations} \cite{Chmutant}.} Moreover, not every graph is a circle graph, that is, there exist graphs that cannot be represented as intersection graphs of associated chord diagrams.\footnote{In \cite{Bou} Bouchet gives a complete characterization of circle graphs by showing a minimal set of obstructions.}

The following result is useful when constructing a circle graph from simpler pieces.

\begin{lemma}\label{lemacirclewedge}
If $G_1$ and $G_2$ are circle graphs, then $G_1 \vee G_2$ is a circle graph. Moreover, if $G_1$ and $G_2$ are bipartite, hence $G_1 \vee G_2$ is so.
\end{lemma}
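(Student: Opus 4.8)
The plan is to give an explicit construction: starting from chord diagrams for $G_1$ and $G_2$, produce a chord diagram for $G_1\vee G_2$ by splicing a scaled-down copy of the second diagram into a small arc of the first, placed next to one endpoint of the chord representing the wedge vertex. Since whether two chords of a diagram cross depends only on the cyclic order of their four endpoints on the circle (they cross iff these endpoints alternate), both the construction and its verification can be phrased purely in terms of cyclic words of endpoints.

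Concretely, I would fix chord diagrams $\mathcal{C}_1,\mathcal{C}_2$ realizing $G_1,G_2$, and let $c_i$ be the chord corresponding to the distinguished vertex $v_i$, so that $G_1\vee G_2$ is obtained by identifying $v_1$ with $v_2$. Reading the circle of $\mathcal{C}_i$ from one endpoint of $c_i$, I can record its cyclic order of endpoints as $(p_i,\mathbf{a}_i,q_i,\mathbf{b}_i)$, where $p_i,q_i$ are the two endpoints of $c_i$ and $\mathbf{a}_i,\mathbf{b}_i$ are the ordered sequences of endpoints of the remaining chords lying on the two arcs cut off by $c_i$. I would then define a new chord diagram $\mathcal{C}$ with cyclic word $(p,\ \mathbf{a}_2,\ \mathbf{a}_1,\ q,\ \mathbf{b}_1,\ \mathbf{b}_2)$, declaring $p:=p_1=p_2$ and $q:=q_1=q_2$; thus $\mathcal{C}$ has a single chord $v$ with endpoints $\{p,q\}$ (serving as both $c_1$ and $c_2$), together with a copy of every other chord of $\mathcal{C}_1$ (endpoints in $\mathbf{a}_1\cup\mathbf{b}_1$) and of $\mathcal{C}_2$ (endpoints in $\mathbf{a}_2\cup\mathbf{b}_2$).

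It then remains to check that the intersection graph of $\mathcal{C}$ is exactly $G_1\vee G_2$, which I would do by comparing cyclic orders of quadruples of endpoints. First, restricting the cyclic word of $\mathcal{C}$ to $\{p,q\}\cup\mathbf{a}_1\cup\mathbf{b}_1$ gives $(p,\mathbf{a}_1,q,\mathbf{b}_1)$, the word of $\mathcal{C}_1$, so any two chords both coming from $\mathcal{C}_1$ (one possibly being $v$) cross in $\mathcal{C}$ iff they crossed in $\mathcal{C}_1$; symmetrically, the restriction to $\{p,q\}\cup\mathbf{a}_2\cup\mathbf{b}_2$ is $(p,\mathbf{a}_2,q,\mathbf{b}_2)$, the word of $\mathcal{C}_2$. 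Second, for a chord $e\neq v$ coming from $\mathcal{C}_1$ and a chord $f\neq v$ coming from $\mathcal{C}_2$, the endpoints of $e$ lie in $\mathbf{a}_1\cup\mathbf{b}_1$, while in the word $(p,\mathbf{a}_2,\mathbf{a}_1,q,\mathbf{b}_1,\mathbf{b}_2)$ one of the two arcs joining the endpoints of $f$ avoids the whole block $\mathbf{a}_1\cup\{q\}\cup\mathbf{b}_1$ — whether both endpoints of $f$ lie in $\mathbf{a}_2$, both in $\mathbf{b}_2$, or one in each (this last case occurring exactly when $f$ crossed $c_2$, so that the endpoints of $f$ straddle $p$ in the cyclic word). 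Hence both endpoints of $e$ lie on the other arc, and $e,f$ do not cross. Taken together, these two observations say that the intersection graph of $\mathcal{C}$ is $G_1$ and $G_2$ amalgamated at $v$, i.e. $G_1\vee G_2$, so $G_1\vee G_2$ is a circle graph. I expect the second observation to be the only step requiring genuine care — a short case check on cyclic orders — the rest being bookkeeping.

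For the ``moreover'' clause it is enough to recall that a Lando graph is by definition a bipartite circle graph, so I only need $G_1\vee G_2$ to be bipartite. The wedge vertex $v$ is a cut vertex of $G_1\vee G_2$, so every cycle of $G_1\vee G_2$ lies entirely in $G_1$ or entirely in $G_2$; since $G_1$ and $G_2$ are bipartite they contain no odd cycle, hence neither does $G_1\vee G_2$, which is therefore bipartite, and thus a Lando graph.
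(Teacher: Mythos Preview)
Your proof is correct and follows essentially the same construction as the paper, which simply exhibits the spliced chord diagram in a figure (Figure~\ref{circlewedge}) and lets the picture speak for itself. Your version is a careful combinatorial write-up of that picture using cyclic words of endpoints, and your cut-vertex argument for bipartiteness makes explicit what the paper leaves implicit.
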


\begin{proof}
Let $v_1\in G_1$ and $v_2 \in G_2$ be the base points used to construct $G_1 \vee G_2$. Starting from the two chord diagrams associated to $G_1$ and $G_2$, Figure \ref{circlewedge} represents the chord diagram corresponding to $G_1 \vee G_2$, where $v$ corresponds to the wedge vertex.
\end{proof}

\subsection{Wedge of spheres conjecture}
In this paper we propose the following conjecture:

\begin{conjecture}\label{conj} \
\begin{enumerate}
\item[(1)] The independence simplicial complex associated to a circle graph is homotopy equivalent to a wedge of spheres.\footnote{We consider a contractible set to be homotopy equivalent to an empty wedge of spheres.}
\item[(2)] In particular, the independence simplicial complex associated to a bipartite circle (Lando) graph is homotopy equivalent to a wedge of spheres.
\item[(3)] In particular, the (co)homology groups of the independence simplicial complex of a Lando graph have no torsion. This, by \cite{GMS}, implies that the extreme Khovanov homology of any link diagram is torsion-free.
\end{enumerate}
\end{conjecture}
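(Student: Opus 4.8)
The plan is to prove part~(1), from which~(2) and~(3) follow at once: bipartite circle graphs form a subfamily of all circle graphs, and a wedge of spheres has free reduced (co)homology, so by the isomorphism of~\cite{GMS} the extreme Khovanov homology of any link diagram would then carry no torsion. By Remark~\ref{remm} and Proposition~\ref{propwedge}(1),(5), $I_{G_1\sqcup G_2}=I_{G_1}*I_{G_2}$ is a wedge of spheres whenever $I_{G_1}$ and $I_{G_2}$ are; so, arguing by induction on $|V(G)|$ (the base case being a point), we may assume $G$ is a connected circle graph realized by a chord diagram $\mathcal{C}$.

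The engine is Proposition~\ref{Reiner}: for any chord $v$,
$$I_G \;\sim_h\; I_{G-v}\,\cup\,\bigl(v*I_{G-st_G(v)}\bigr),$$
so $I_G$ is the mapping cone of the simplicial inclusion $\iota\colon I_{G-st_G(v)}\hookrightarrow I_{G-v}$, where $st_G(v)=\{v\}\cup N_G(v)$. The structural point that makes the induction plausible is that both $G-v$ and $G-st_G(v)$ are again circle graphs: erasing the chord $v$, respectively erasing $v$ together with every chord that crosses it, leaves a chord diagram. Hence, by the inductive hypothesis, $I_{G-v}$ and $I_{G-st_G(v)}$ are each contractible or homotopy equivalent to a wedge of spheres, and then so is $\Sigma I_{G-st_G(v)}$ by Proposition~\ref{propwedge}(7).

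It therefore suffices to exhibit, for every connected circle graph, a chord $v$ whose mapping cone collapses to a wedge of spheres. Three favourable situations each close the induction: (a)~$\iota$ is null-homotopic, so $I_G\sim_h I_{G-v}\vee\Sigma I_{G-st_G(v)}$ by the second part of Proposition~\ref{Reiner}; (b)~$I_{G-st_G(v)}$ is contractible, a subcase of~(a), yielding $I_G\sim_h I_{G-v}$; and (c)~$I_{G-v}$ is contractible, in which case the mapping cone is homotopy equivalent to $\Sigma I_{G-st_G(v)}$, again a wedge of spheres. Classical reductions supply such a $v$ in many cases: if some chord crosses no other chord it is isolated in $G$ and $I_G=v*I_{G-v}$ is a cone, hence contractible; if some chord $v$ is a leaf of $G$ with neighbour $w$, apply Proposition~\ref{Reiner} at $w$ instead, noting that $v$ becomes isolated in $G-w$, so $I_{G-w}$ is a cone and we are in~(c) with $I_G\sim_h\Sigma I_{G-st_G(w)}$ and $G-st_G(w)$ a circle graph; and if $N_G(v)\subseteq N_G(w)$ for distinct chords $v,w$, the standard fold lemma gives $I_G\sim_h I_{G-w}$, with $G-w$ a circle graph. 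These reductions underlie the treatment of the cactus and outerplanar families in Section~3, and a more delicate analysis of an extremal chord of $\mathcal{C}$ extends them to the permutation and non-nested graphs of Sections~4--5.

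The main obstacle, and the reason the statement is a conjecture rather than a theorem, is that for a general circle graph none of~(a)--(c) need hold for any chord: the induction can leave one with the mapping cone of a genuinely essential map between two non-contractible wedges of spheres, and such a cone can fail to be a wedge of spheres (the mapping cone of the degree-two map $S^1\to S^1$ is $\mathbb{R}P^2$, which has $2$-torsion). What the conjecture really asserts is a \emph{no torsion is ever created} phenomenon special to the attaching maps $\iota$ coming from chord diagrams. To attack it in general I would strengthen the inductive hypothesis to record extra information about the pair $\iota\colon I_{G-st_G(v)}\hookrightarrow I_{G-v}$ — for instance a compatible choice of a wedge-of-spheres model of $I_{G-v}$ in which $\iota$ is, up to homotopy, the inclusion of a subwedge — and show it is preserved by the chord-diagram operations, perhaps exploiting Bouchet's description of circle graphs by forbidden vertex-minors~\cite{Bou} or the Chmutov--Lando mutation description of the chord diagrams realizing a fixed circle graph~\cite{Chmutant}. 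Short of such a structural argument, the realistic program is: (i) check by computer that~(a)--(c) cover all chord diagrams up to a given size; (ii) enlarge the list of families by extremal-chord analysis as above; and (iii) isolate, if it exists, the minimal chord-diagram configuration in which a bad attaching map is forced, and rule it out by hand.
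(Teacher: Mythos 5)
This statement is labelled a \emph{conjecture} in the paper, and the paper does not prove it in full generality; it only establishes it for certain families (cactus, outerplanar, permutation, non-nested circle graphs) and for specific ad hoc families in the knot-theoretic applications. Your write-up is not a proof either, and you say so explicitly at the end, which is the honest and correct stance. As an account of \emph{why} the statement is plausible and where the difficulty lives, what you have written lines up well with the paper: your reduction to the connected case via $I_{G_1\sqcup G_2}=I_{G_1}*I_{G_2}$ and the join formula for spheres, your use of Proposition~\ref{Reiner} (the decomposition $I_G=(I_G-v)\cup\mathrm{cone}(v,\mathrm{lk}(v))$), your cases (a)--(c) correspond respectively to the null-homotopic-link hypothesis in Proposition~\ref{Reiner}, to Domination Lemma~\ref{dominationlemma}(1), and to the leaf case Corollary~\ref{addingleave}; and you correctly observe that $G-v$ and $G-st_G(v)$ remain circle graphs, which is the closure property that makes the induction sensible. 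Your identification of the genuine obstruction — that the mapping cone of an essential attaching map between wedges of spheres can create torsion, as in $\mathbb{R}P^2$ from the degree-$2$ self-map of $S^1$ — is exactly the point; the conjecture is the claim that chord-diagram combinatorics forbids such bad attaching maps.

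Two small refinements. First, your cases (a)--(c) are not three independent sufficient conditions: (b) (contractible $\mathrm{lk}$) and (c) ($I_{G-v}$ contractible) are both special cases of (a) in the sense that they each force the inclusion $\iota$ to be null-homotopic, so the dichotomy you really want is ``(a) holds for some $v$'' versus ``no $v$ admits a null-homotopic $\iota$.'' Second, the paper also makes essential use of Csorba's subdivision theorem (Theorem~\ref{csorba}) and its generalization, Theorem~\ref{globaltheo}, which replaces a long path or a degree-$n$ vertex with independent link by a suspension of a smaller complex $\mathcal{K}(G,v)$ that need not itself be flag; this is how the paper handles the families where no single vertex deletion closes the induction, and it is a genuinely different mechanism from the cone-by-cone attachment that your program is built around. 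Incorporating a generalized-Csorba reduction step into your plan, and tracking whether the resulting $\mathcal{K}(G,v)$ stays within the class of independence complexes of circle graphs (Proposition~\ref{propdibu} is the paper's partial result in that direction), would bring your program closer to what the paper actually does.

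In short: you have not proven the conjecture — no one has — but your account of the inductive machinery, the role of Proposition~\ref{Reiner} and the domination/leaf reductions, the closure of circle graphs under deletion, and the mapping-cone obstruction is accurate and matches the paper's own point of view, with the caveat that the Csorba/Theorem~\ref{globaltheo} suspension reductions should be added to your toolkit as a distinct (and in the paper, heavily used) reduction move.
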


\begin{figure}
\centering
\includegraphics[width = 12cm]{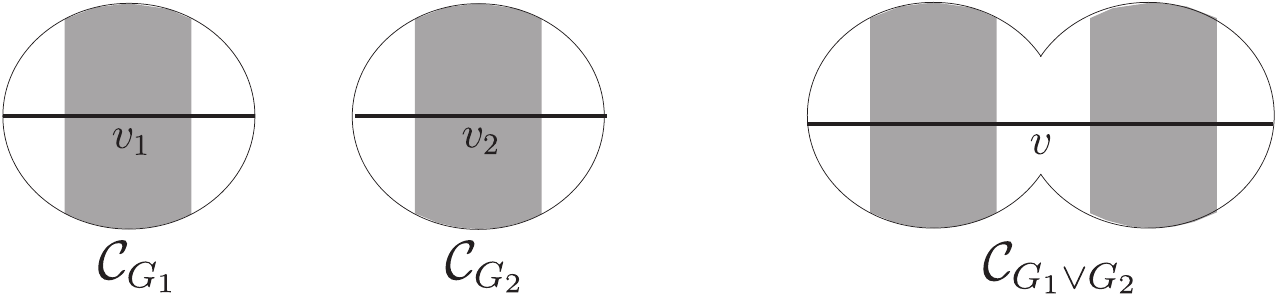}
\caption{\small{The construction of the chord diagram associated to $G_1 \vee G_2$ is shown. The shaded areas represent the rest of the chords in $G_1$ and $G_2$.}}
\label{circlewedge}
\end{figure}

This paper is motivated by the study of extreme Khovanov homology of link diagrams, which was proven in \cite{GMS} to coincide with the reduced cohomology of the independence simplicial complex associated to the Lando graph constructed from the link diagram. This is the reason for considering those particularizations of the general conjecture. We review the definition of Khovanov homology and its relations with bipartite circle graphs in Section \ref{sec7}.

\begin{remark}\label{rechum}
We were informed by Sergei Chmutov on his work on independence complexes of bipartite circle graphs and the talk he gave at Knots in Washington XXI \cite{Chmutconf}, where he conjectured that the independence complex of a bipartite circle graph is homotopy equivalent to a wedge of spheres of the same dimension. Eric Babson sent him a counterexample, the chord diagram shown in Figure \ref{exbabson}(3) leading to a bipartite circle graph of $17$ vertices and $20$ edges, whose independence complex is homotopy equivalent to $S^4 \vee S^5$ \cite{Bab}.

\begin{figure}
\centering
\includegraphics[width = 11cm]{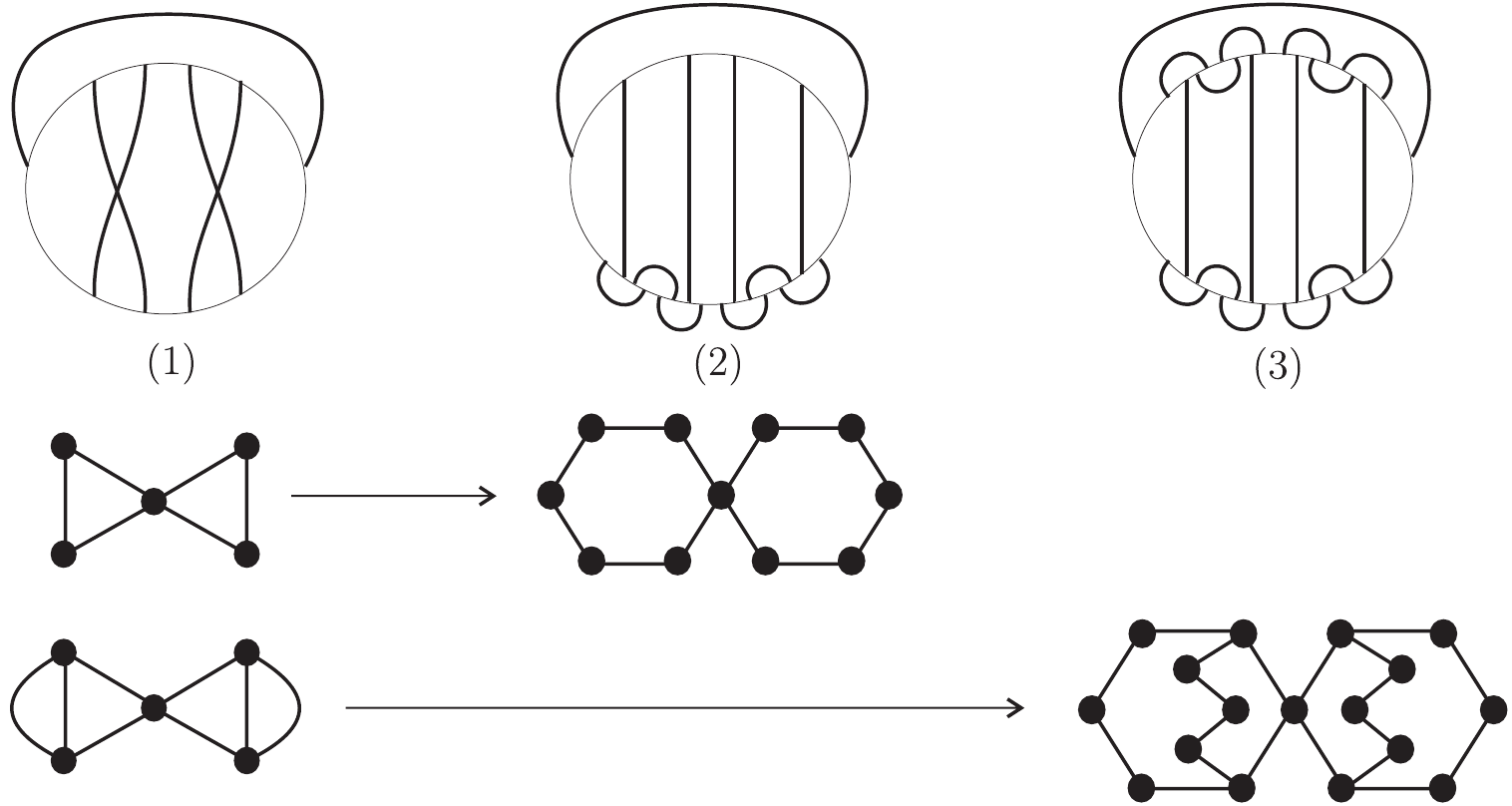}
\caption{\small{We illustrate Remark \ref{rechum} by showing the chord diagrams corresponding to a wedge of two triangles (1), a wedge of two hexagons (2) and the example given by Babson (3) and their associated graphs.}}
\label{exbabson}
\end{figure}

Example \ref{exwedge}, from \cite{GMS}, is simpler (Figure \ref{exbabson}(2)). Both examples can be obtained from a simpler non-bipartite circle graph $G$ consisting on a wedge of two triangles, with $I_G \sim_h S^0 \vee S^1$, by using a result by Csorba stating that if an edge in $G$ is subdivided into four intervals to get the graph $H$, then $I_H \sim_h \Sigma I_G$ \cite{Csorba}. Figure \ref{exbabson} (together with Remark \ref{remcsorba}) illustrates the relations between the three previous examples. We discuss generalizations of Csorba result in Section \ref{sec6}.
\end{remark}


\section{Domination Lemma and its corollaries}\label{sec3}

In this section we introduce some results that will be useful throughout the paper. We start by reviewing the homotopy type of the independence complexes of paths, trees, and cycles and we prove Conjecture \ref{conj} for the families of cactus and outerplanar graphs. In order to simplify notation we delete $\mathcal{K}$ from $lk_\mathcal{K}(v)$ and $st_\mathcal{K}(v)$ when it is clear from context which simplicial complex is considered.

\begin{definition}
Let $v,w$ be vertices of a graph $G$. We say that $v$ dominates $w$ if \, $lk_G(w) \cup w \cup v \subseteq lk_G(v) \cup v \cup w$.
\end{definition}

\begin{lemma}\label{dominationlemma}[Domination lemma] Let $v, w$ be two vertices of a graph $G$ such that $v$ dominates $w$.
\begin{enumerate}
\item[(1)] \rm{\rm{\cite{Csorba}}} If $v$ and $w$ are not connected by an edge in $G$, then $I_G$ is homotopy equivalent to $I_{G-v}$.
\item[(2)] If $v$ and $w$ are connected by an edge in $G$, then $I_G$ is homotopy equivalent to $I_{G-v} \vee \Sigma I_{G-st(v)}$.
\end{enumerate}

\begin{proof}
(1) By Proposition \ref{Reiner} it suffices to show that $I_{G-st(v)}$ is contractible, which is the case, as $I_{G-st(v)} \sim_h I_{G-st(v)-w}*w$.

(2) By Proposition \ref{Reiner} it suffices to show that $I_{G-st(v)}$ is contractible in $I_{G-v}$. It is the case, since $I_{G-st(v)}$ is embedded in the cone $I_{G-st(w)}*w$ and this cone is embedded in $I_{G-v}$.
\end{proof}
\end{lemma}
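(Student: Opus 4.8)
The plan is to deduce both parts from Proposition~\ref{Reiner}, so the real work is to control the subcomplex $I_{G-st(v)}$ and, in case~(2), its inclusion into $I_{G-v}$. The first step is to translate the domination hypothesis $lk_G(w)\cup w\cup v\subseteq lk_G(v)\cup v\cup w$ into neighborhood language: it says exactly that every neighbor of $w$ is either $v$ or a neighbor of $v$. Writing $st_G(x)$ for the closed neighborhood $lk_G(x)\cup x$, this is the inclusion $st_G(w)\subseteq st_G(v)\cup w$, which in the adjacent case ($w\in st_G(v)$) sharpens to $st_G(w)\subseteq st_G(v)$, and in the non-adjacent case gives $lk_G(w)\subseteq st_G(v)$ together with $w\notin st_G(v)$.

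For part~(1): since $v$ and $w$ are non-adjacent, $w$ is still a vertex of $G-st_G(v)$, and as all of its neighbors lie in $st_G(v)$ it becomes an isolated vertex there. Hence $I_{G-st_G(v)}=I_{G-st_G(v)-w}*w$ is a cone, so it is contractible; then so is $\Sigma I_{G-st_G(v)}$, and the second conclusion of Proposition~\ref{Reiner} (applicable because the inclusion of a contractible subcomplex is null-homotopic) gives $I_G\sim_h I_{G-v}\vee\Sigma I_{G-st_G(v)}\sim_h I_{G-v}$.

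For part~(2): now $st_G(w)\subseteq st_G(v)$, so $w\in st_G(v)$ is removed in $G-st_G(v)$, while $w$ survives in $G-v$. I would exhibit the chain of subcomplexes $I_{G-st_G(v)}\subseteq I_{G-st_G(w)}\subseteq I_{G-st_G(w)}*w\subseteq I_{G-v}$: the first inclusion because $st_G(w)\subseteq st_G(v)$ gives $V(G)\setminus st_G(v)\subseteq V(G)\setminus st_G(w)$ on vertices, hence on simplices; the second because no vertex of $G-st_G(w)$ is adjacent to $w$, so $w$ may be coned on; the third because $v\in st_G(w)$, so neither $w$ nor any vertex of $G-st_G(w)$ equals $v$, and adjoining $w$ to an independent set disjoint from $lk_G(w)$ keeps it independent. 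Since $I_{G-st_G(w)}*w$ is a cone, hence contractible, the inclusion $I_{G-st_G(v)}\hookrightarrow I_{G-v}$ factors through it and is null-homotopic, so Proposition~\ref{Reiner} yields $I_G\sim_h I_{G-v}\vee\Sigma I_{G-st_G(v)}$.

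I do not expect a real obstacle: the definition of domination is designed precisely so that $w$ is forced to be isolated in $G-st(v)$ (case~1) or absorbed by a cone point of $I_{G-v}$ (case~2). The only points requiring attention are the bookkeeping between open and closed neighborhoods, the harmless possibility that $w$ is already isolated in $G$, and the degenerate extremes such as $st_G(v)=V(G)$, all of which are absorbed by the conventions fixed in Section~\ref{Prel} (the empty set is a $(-1)$-simplex, a cone on the empty complex is a point, and a contractible complex counts as an empty wedge of spheres).
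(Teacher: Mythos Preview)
Your argument is correct and follows essentially the same route as the paper: both parts are reduced to Proposition~\ref{Reiner}, with $I_{G-st(v)}$ shown contractible via the isolated vertex $w$ in case~(1), and the inclusion $I_{G-st(v)}\hookrightarrow I_{G-v}$ shown null-homotopic by factoring through the cone $I_{G-st(w)}*w$ in case~(2). Your version simply spells out the neighborhood bookkeeping in more detail.
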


The following result is a particular case of \emph{Domination lemma} which is useful when the graph $G$ has a leaf. The only vertex adjacent to a leaf will be called \emph{preleaf}.

\begin{corollary}\label{addingleave}
Let $w$ be a leaf of a graph $G$, and let $v$ be its associated preleaf. Then $$I_G \sim_h \Susp I_{G - st_G(v)}.$$
\end{corollary}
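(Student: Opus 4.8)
The plan is to obtain this as an immediate consequence of part (2) of the \emph{Domination lemma} (Lemma~\ref{dominationlemma}). First I would verify that the preleaf $v$ dominates the leaf $w$. Since $w$ is a leaf with unique neighbor $v$, we have $lk_G(w) = \{v\}$, so $lk_G(w) \cup \{w\} \cup \{v\} = \{v,w\} \subseteq lk_G(v)\cup\{v\}\cup\{w\}$, which is exactly the domination condition. Moreover $v$ and $w$ are joined by an edge, so Lemma~\ref{dominationlemma}(2) applies and yields
$$I_G \sim_h I_{G-v} \vee \Susp I_{G - st_G(v)}.$$

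The second step is to check that the first wedge summand is contractible. Once $v$ is deleted, the leaf $w$ becomes an isolated vertex of $G-v$; hence $w$ is independent from every other vertex of $G-v$, and therefore $I_{G-v}$ is the cone $w * I_{G-v-w}$, which is contractible by Proposition~\ref{propwedge}(6). Since wedging a contractible complex with any complex $\mathcal{K}$ gives something homotopy equivalent to $\mathcal{K}$ (Proposition~\ref{propwedge}(4), with the contractible complex in the role of the neutral element $b$, together with Proposition~\ref{propwedge}(2)), I would conclude
$$I_G \sim_h \Susp I_{G - st_G(v)},$$
as desired.

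I do not expect any substantive obstacle: the statement is essentially a packaging of the Domination lemma with the elementary observation that an isolated vertex turns an independence complex into a cone. The only points requiring a moment's care are confirming that the domination hypothesis is genuinely met and that $G-v$ really does have $w$ as an isolated vertex — both of which use precisely that $w$ has degree exactly $1$ in $G$, so that $v$ is its only neighbor.
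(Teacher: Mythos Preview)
Your proof is correct and follows exactly the paper's approach: apply the Domination lemma (part (2)) using that $v$ dominates $w$, and then observe that $I_{G-v}$ is contractible because $w$ becomes isolated in $G-v$. The paper's proof is just a terser version of what you wrote.
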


\begin{proof}
Since $v$ dominates $w$ and they are neighbors, the result holds by applying \emph{Domination lemma} together with the fact that $I_{G-v}$ is contractible (since $w$ is isolated in $G-v$).
\end{proof}

\begin{corollary} {\rm{\cite{kozlov}}} \label{path}
Let $L_n$ be the $n$-path, that is, the graph consisting of $n+1$ vertices joined by $n$ edges. Then
$$I_{L_n} \sim_h
\left\{
  \begin{array}{cll}
     * & & \mbox{if } n=3k, \\
     S^k & & \mbox{if } n=3k+1, \, 3k+2.
  \end{array}
  \right.
  $$
\end{corollary}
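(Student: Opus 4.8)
The statement to prove is the classical computation of $I_{L_n}$ for the $n$-path, and the natural approach is induction on $n$ using Corollary~\ref{addingleave}. The plan is to pick an endpoint of $L_n$ as a leaf $w$; its neighbor $v$ is the preleaf, and $G - st_G(v)$ is obtained from $L_n$ by deleting both $w$ and $v$, hence it is the path $L_{n-3}$ on the remaining $n-2$ vertices (with the convention that $L_{-1} = \emptyset$ when $n=2$, and $L_0$ a single point when $n=3$). Corollary~\ref{addingleave} then gives the recursion $I_{L_n} \sim_h \Susp I_{L_{n-3}}$, which immediately reduces the computation modulo $3$.

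The base cases are the start of the induction: $n=0$ gives a single point, which is contractible; $n=1$ is a single edge, hence a cone, again contractible; wait — this would force everything to be contractible, so one must instead anchor the recursion correctly. The right base cases are $n=0$: $I_{L_0} = \{*\}$ is contractible $= S^{-1}$-suspended once, but more usefully we take $n \in \{1,2,3\}$ directly. For $n=1$, $L_1$ is a single edge, so $I_{L_1}$ has two vertices and no edge between them: $I_{L_1} = S^0$. For $n=2$, $L_2$ is a path on three vertices $a$–$b$–$c$; the independent sets are $\{a\},\{b\},\{c\},\{a,c\}$, so $I_{L_2}$ is an edge together with an isolated point, which is contractible, i.e. $I_{L_2} \sim_h *$. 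Hmm, but then $I_{L_2}$ should be $S^0$-level — let me recheck against the claimed formula: $n=2 = 3\cdot 0 + 2$ should give $S^0$. Indeed $I_{L_2} = \{ac, b\}$ has two connected components (the edge $ac$ and the vertex $b$), so it is homotopy equivalent to two points $= S^0$. Good. For $n=3$, $L_3$ on vertices $a$–$b$–$c$–$d$: applying Corollary~\ref{addingleave} with leaf $d$, preleaf $c$, we get $G - st_G(c) = \{a\}$ (delete $c,d$ and the neighbor $b$), so $I_{L_3} \sim_h \Susp I_{L_0} = \Susp(\{*\}) \sim_h *$, matching $n = 3\cdot 1$.

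With these base cases in hand, the inductive step is routine: assuming the formula for $L_{n-3}$, apply $I_{L_n} \sim_h \Susp I_{L_{n-3}}$ together with Proposition~\ref{propwedge}(7) (suspension of a sphere $S^k$ is $S^{k+1}$, and suspension of a contractible complex is contractible). If $n = 3k$, then $n - 3 = 3(k-1)$ and $I_{L_{n-3}} \sim_h *$, so $I_{L_n} \sim_h \Susp * \sim_h *$. If $n = 3k+1$ (resp. $3k+2$), then $n-3 = 3(k-1)+1$ (resp. $3(k-1)+2$), so $I_{L_{n-3}} \sim_h S^{k-1}$ and $I_{L_n} \sim_h \Susp S^{k-1} = S^k$. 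There is essentially no obstacle here; the only point requiring minor care is getting the base cases and the degenerate small-path conventions ($L_0$, $L_{-1}$) consistent with the suspension convention $\Susp \emptyset = S^0$ from Proposition~\ref{propwedge}(3), so that the recursion $I_{L_n}\sim_h \Susp I_{L_{n-3}}$ holds uniformly starting from $n \ge 3$. I would present the base cases $n=0,1,2$ explicitly and then run the induction.
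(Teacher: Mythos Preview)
Your proposal is correct and matches the approach the paper implicitly intends: the paper states this result as a corollary immediately following Corollary~\ref{addingleave} (citing \cite{kozlov}) without giving an explicit proof, so the induction via $I_{L_n}\sim_h\Susp I_{L_{n-3}}$ is exactly what is meant. One small slip to clean up: early on you write that $G-st_G(v)$ is obtained by deleting ``both $w$ and $v$,'' but in fact $st_G(v)$ contains three vertices (the preleaf and both its neighbors), as you correctly use in your $n=3$ computation.
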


\begin{corollary}\label{pathwithL3}
Let $G$ be a graph containing two vertices of degree one, $w_1$ and $w_2$, connected by a path of length 3. Then $I_G$ is contractible.
\end{corollary}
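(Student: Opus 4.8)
The plan is to reduce the statement to Corollary~\ref{path} (the case $n=3$, giving $I_{L_3} \sim_h *$) by peeling off the two degree-one vertices $w_1, w_2$ one at a time using Corollary~\ref{addingleave}. Label the length-$3$ path as $w_1 = u_0, u_1, u_2, u_3 = w_2$, so that $u_1$ is the preleaf of $w_1$ and $u_2$ is the preleaf of $w_2$; note $u_1 \ne u_2$ since the path has length $3$, and $u_1$ is not adjacent to $w_2$ nor $u_2$ to $w_1$ inside this path.

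First I would apply Corollary~\ref{addingleave} to the leaf $w_1$ with preleaf $u_1$, obtaining $I_G \sim_h \Susp I_{G - st_G(u_1)}$. Let $G' = G - st_G(u_1)$. Deleting the closed star of $u_1$ removes $u_1$ and all its neighbors — in particular it removes $w_1$ and $u_0$'s side of things — but crucially it does \emph{not} remove $w_2$ or its preleaf $u_2$, since neither is adjacent to $u_1$. Moreover $w_2$ still has degree one in $G'$: its only neighbor $u_2$ survives (as $u_2 \ne u_1$ and $u_2$ need not be adjacent to $u_1$), so in $G'$ the vertex $w_2$ is a leaf with preleaf $u_2$. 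Applying Corollary~\ref{addingleave} again, now to $w_2$ inside $G'$, gives $I_{G'} \sim_h \Susp I_{G' - st_{G'}(u_2)}$, hence
$$
I_G \sim_h \Susp \Susp I_{G' - st_{G'}(u_2)} \sim_h \Susp^2 I_{G'' },
$$
where $G'' = G' - st_{G'}(u_2) = G - st_G(u_1) - st_{G'}(u_2)$.

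The key observation that finishes the proof is that $u_1$ is then isolated — wait, $u_1$ is already gone; rather, the point is that $G''$ has at least one isolated vertex. Indeed, in $G$ the middle vertices $u_1$ and $u_2$ are adjacent, so after removing $st_G(u_1)$ the vertex $u_1$ disappears, and after then removing $st_{G'}(u_2)$ the vertex $u_2$ disappears as well; what remains is $G$ with all of $N[u_1] \cup N[u_2]$ deleted. This set includes $w_1$ and $w_2$, so those are gone, but more to the point: I claim $I_{G''}$ is contractible. The cleanest way is to observe directly that after removing $st_G(u_1)$, the remaining graph $G'$ is exactly $G - v$ for $v = u_1$ in the setting where $u_1$ dominates $w_1$; one more application shows $G''$ has no obstruction to contractibility because $w_2$ became isolated at the previous stage before we removed it. Since $\Susp^2$ of a contractible complex is contractible, we conclude $I_G \sim_h *$.

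I expect the main obstacle to be the bookkeeping at the second deletion: one must verify that $w_2$ genuinely remains a degree-one vertex in $G' = G - st_G(u_1)$, i.e.\ that the edge $w_2 u_2$ is not destroyed and no new adjacency of $w_2$ is created. Since $w_2$'s unique $G$-neighbor is $u_2$, and $u_2 \notin st_G(u_1)$ precisely because the path $w_1 u_1 u_2 w_2$ has length $3$ (so $u_1, u_2$ are adjacent but $u_1, w_2$ are not, and $u_2 \ne u_1$), this is forced; a careful statement of this non-adjacency is the only delicate point. Once that is in place, two applications of Corollary~\ref{addingleave} plus Corollary~\ref{path} (or the direct remark that a suspension of a contractible space is contractible) gives the result.
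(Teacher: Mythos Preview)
Your argument contains a concrete error in the bookkeeping after the first deletion. With your labeling $w_1=u_0,\;u_1,\;u_2,\;u_3=w_2$, the middle vertices $u_1$ and $u_2$ are \emph{adjacent} (they are consecutive on the path). Hence $u_2\in st_G(u_1)$, and deleting $st_G(u_1)$ removes $u_2$ along with $w_1$ and $u_1$. In $G'=G-st_G(u_1)$ the vertex $w_2$ has therefore lost its unique neighbour and is \emph{isolated}, not a leaf; you cannot apply Corollary~\ref{addingleave} to $w_2$ in $G'$. You actually state ``$u_1,u_2$ are adjacent'' and ``$u_2\notin st_G(u_1)$'' in the same sentence, which is where the contradiction sits.

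The good news is that this observation immediately finishes the proof, and is exactly what the paper does: since $w_2$ is isolated in $G'$, one has $I_{G'}=I_{G'-w_2}*w_2$, a cone, hence contractible; then $I_G\sim_h\Susp I_{G'}$ is contractible as well. So only one application of Corollary~\ref{addingleave} is needed, followed by the remark that an isolated vertex makes the independence complex a cone. Your plan to peel twice is both unnecessary and, as written, not available.
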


\begin{proof}
Let $w_1$, $v_1$, $v_2$ and $w_2$ be the sequence of vertices constituting the path of length 3 in the statement, disposed as shown in Figure \ref{path3}. Applying Corollary \ref{addingleave} taking $w_1$ as leaf-vertex leads to $I_G \sim_h \Susp \left(I_{G-st_G(v_1)}\right) \sim_h \Susp \left(I_{G-st_G(v_1)-w_2} * w_2\right)$, which is contractible.
\end{proof}

\begin{figure}
\centering
\includegraphics[width = 4.7cm]{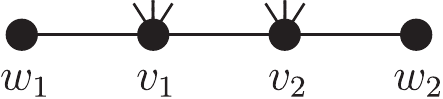}
\caption{\small{The path illustrating Corollary \ref{pathwithL3}.}}
\label{path3}
\end{figure}

\begin{figure}
\centering
\includegraphics[width = 9cm]{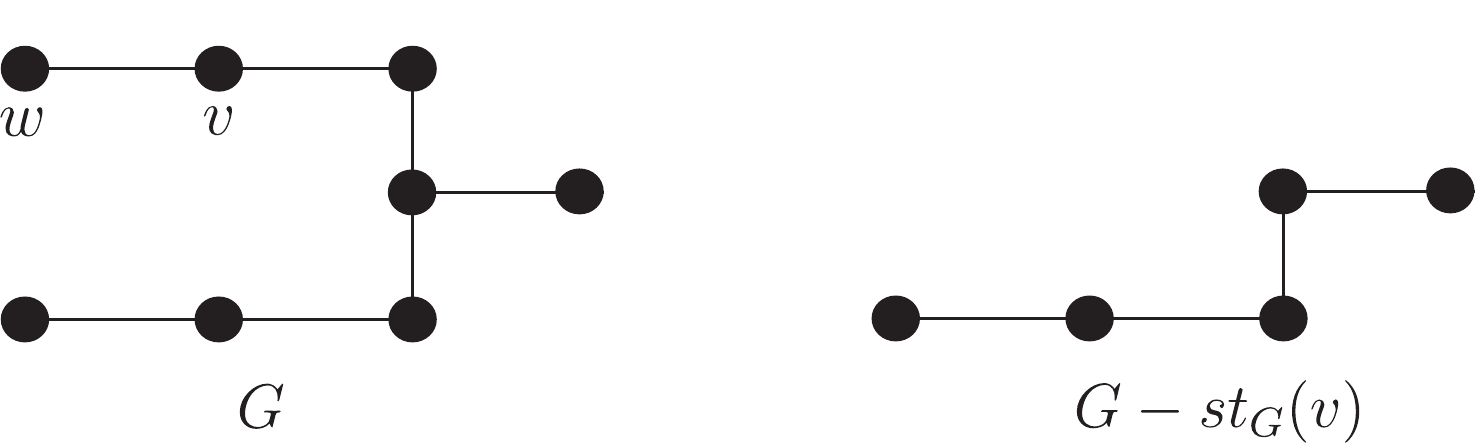}
\caption{\small{The graphs $G$ and $G-st_G(v)$ illustrating Remark \ref{reml6nocont}.}}
\label{l6nocont}
\end{figure}

\begin{remark}\label{reml6nocont}
Note that Corollary \ref{pathwithL3} cannot be extended to the case of a graph containing a path of length 6. The graph $G$ shown in Figure \ref{l6nocont} is such an example, since by Corollary \ref{addingleave} $$I_G \sim_h \Sigma (I_{G-st_G(v)}) = \Sigma I_{L_4} \sim_h \Sigma S^1 = S^2.$$
\end{remark}

\begin{corollary} {\rm{\cite{kozlov}}} \label{proptree}
Let $T$ be a tree, that is, a connected graph with no cycles. Then $I_G$ is either contractible or homotopy equivalent to a sphere.
\end{corollary}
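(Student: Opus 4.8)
The plan is to induct on the number of vertices of the tree $T$, the base case being a single vertex (whose independence complex is a point, hence contractible). For the inductive step, I would first dispose of the trivial case where $T$ is a single edge or has an isolated structure; assuming $T$ has at least three vertices, a finite tree always contains a leaf $w$, and I let $v$ be its preleaf. By Corollary \ref{addingleave} we have $I_T \sim_h \Susp I_{T - st_T(v)}$, so it suffices to understand the homotopy type of $I_{T'}$ where $T' = T - st_T(v)$.

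The key observation is that $T' = T - st_T(v)$ is a disjoint union of trees, each with strictly fewer vertices than $T$ (we removed $v$ and at least the leaf $w$). By Remark \ref{remm}, $I_{T'}$ is the join of the independence complexes of these smaller trees, and by the inductive hypothesis each such complex is either contractible or homotopy equivalent to a single sphere. Now I use Proposition \ref{propwedge}: a join in which any factor is contractible is itself contractible (part (6), since a contractible complex is homotopy equivalent to the one-point complex $b$ and $\mathcal{K} * b \sim_h b$); and a join of spheres $S^{k_1} * \cdots * S^{k_r}$ is again a single sphere $S^{k_1 + \cdots + k_r + r - 1}$ by iterating part (1). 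Hence $I_{T'}$ is either contractible or a single sphere, and applying $\Susp$ (which sends a point to $S^0$ and $S^k$ to $S^{k+1}$, using part (7) of Proposition \ref{propwedge} together with Proposition \ref{propwedge}(3)) we conclude that $I_T$ is contractible or homotopy equivalent to a single sphere.

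I do not expect a serious obstacle here; the argument is a clean strong induction and all the needed tools — the leaf-removal formula of Corollary \ref{addingleave} and the behavior of joins and suspensions on spheres and points from Proposition \ref{propwedge} — are already in place. The one point that warrants a line of care is the bookkeeping at the boundary: one must check that removing the closed star of the preleaf genuinely leaves a (possibly empty) forest on fewer vertices, and handle the degenerate possibility that $T - st_T(v)$ is empty, in which case $I_{T'} = \emptyset = S^{-1}$ and $I_T \sim_h \Susp S^{-1} = S^0$, still a sphere, consistent with the statement. This is exactly what happens for $T = L_1$ (a single edge) and $T = L_2$, and it fits with the pattern already recorded in Corollary \ref{path}.
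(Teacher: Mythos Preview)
The paper does not actually supply a proof of this corollary; it is stated with a citation to Kozlov and left at that. Your argument is the natural one suggested by the placement of the result immediately after Corollary~\ref{addingleave} and Corollary~\ref{path}, and it is correct.

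One small slip to fix: in your parenthetical you write that $\Susp$ ``sends a point to $S^0$''. This is false --- the suspension of a one-point complex is $S^0 * b$, a cone, hence contractible. (You are perhaps conflating this with the convention $\Susp\,\emptyset = S^0$ from Proposition~\ref{propwedge}(3), which you do invoke correctly in the final paragraph.) The slip is harmless for the conclusion, since ``contractible'' is one of the two permitted outcomes, but the sentence as written is inaccurate and should be corrected to say that $\Susp$ takes a contractible complex to a contractible complex and $S^k$ to $S^{k+1}$.
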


\begin{proposition}\label{treenotdiv3} Let $T$ be a tree.
\begin{enumerate}
  \item [(1)] If $T$ does not contain paths of length divisible by 3 between its leaves, then $I_T$ is homotopy equivalent to a sphere.
  \item [(2)] If $T$ contains a vertex $x$ such that the length of every path from $x$ to a leaf is divisible by 3, then $I_T$ is contractible.
\end{enumerate}
\end{proposition}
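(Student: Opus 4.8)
The plan is to establish each of the two statements by induction on the number of vertices of $T$, the engine in both cases being Corollary~\ref{addingleave}: if $w$ is a leaf of $T$ and $v$ its preleaf, then $I_T\sim_h\Sigma I_{T-st_T(v)}$, and $T-st_T(v)$ is a forest with strictly fewer vertices. Since the independence complex of a disjoint union is the join of the independence complexes of the pieces (Remark~\ref{remm}), and since by Proposition~\ref{propwedge} a join of spheres is a sphere, a join of any complex with a contractible one is contractible, and $\Sigma S^{m}=S^{m+1}$, everything reduces to understanding how the hypotheses of~(1) and~(2) behave under the passage $T\mapsto T-st_T(v)$. (We tacitly assume $T$ has at least one edge; the one-vertex tree has $I_T$ equal to a point, which already satisfies~(2) and is the degenerate case excluded from~(1).)

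For part~(2), root $T$ at the distinguished vertex $x$, so that every leaf of $T$ lies at depth $\equiv 0\pmod 3$. Since $T$ has an edge it has at least two leaves, so some leaf $w$ has maximal depth $3k$ with $k\ge 1$; let $v$ be its parent (depth $3k-1$) and $p$ the parent of $v$ (depth $3k-2\ge 1$, so $p\ne x$). Maximality of the depth together with the depth condition on leaves forces every child of $v$ to be a leaf, whence $st_T(v)=\{v,p\}\cup\{\text{children of }v\}$ and
$$T-st_T(v)=C_0\sqcup C_1\sqcup\cdots\sqcup C_r,\qquad C_0:=T\setminus\mathrm{subtree}(p),$$
the $C_i$ with $i\ge1$ being the subtrees rooted at the children of $p$ other than $v$. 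The key point is that $C_0$ again satisfies the hypothesis of~(2) with the \emph{same} vertex $x$: deleting $\mathrm{subtree}(p)$ alters the degree of exactly one vertex, namely the parent of $p$, which lies at depth $3(k-1)\equiv 0\pmod 3$, so every leaf of $C_0$—whether inherited from $T$ or newly created—remains at distance $\equiv 0\pmod 3$ from $x$. As $|C_0|\le |V(T)|-3$, the inductive hypothesis (or a triviality, when $C_0$ is a single vertex) gives that $I_{C_0}$ is contractible; hence $I_{T-st_T(v)}=I_{C_0}*I_{C_1}*\cdots*I_{C_r}$ is contractible, and therefore so is $I_T\sim_h\Sigma I_{T-st_T(v)}$.

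For part~(1), pick \emph{any} leaf $w$, let $v$ be its preleaf, and put $F:=T-st_T(v)$. Two claims drive the induction. First, $F$ has no isolated vertex: an isolated vertex of $F$ would have all of its $T$-neighbours inside $st_T(v)$, and since $T$ is acyclic this is only possible if it is a leaf of $T$ whose preleaf is adjacent to $v$, which produces a path of length $3$ between two leaves of $T$—excluded by hypothesis. Second, every connected component $C$ of $F$ again satisfies the hypothesis of~(1). To see this, take two distinct leaves $z_1,z_2$ of $C$, at distance $d\ge 1$ in $C$ (hence in $T$); if both are leaves of $T$ we are done by hypothesis; if only one, say $z_2$, is a \emph{newly created} leaf, then (again by acyclicity of $T$) $z_2$ has exactly one $T$-neighbour inside $st_T(v)$, lying in $N_T(v)$, and concatenating the $C$-path from $z_1$ to $z_2$ with that neighbour, then $v$, then $w$, gives an embedded path in $T$ from the leaf $z_1$ to the leaf $w$ of length $d+3$, so $d+3\not\equiv 0$, hence $d\not\equiv 0\pmod 3$; if both $z_1,z_2$ are newly created, joining them through their escape-neighbours in $N_T(v)$ (together with $v$, if these are distinct) would create a cycle in $T$, which is impossible, so this case does not arise. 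Granting the two claims, each component $C$ of $F$ is a tree with at least two vertices satisfying the hypothesis of~(1), so by induction $I_C$ is a sphere; then $I_F$ is a join of spheres, hence a sphere, and $I_T\sim_h\Sigma I_F$ is a sphere. The base of the induction is $F=\emptyset$ (equivalently, $T$ a star), where $I_T\sim_h\Sigma S^{-1}=S^0$.

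The routine ingredients—that a leaf is dominated by its preleaf, already recorded in Corollary~\ref{addingleave}, and the homotopy algebra of $*$ and $\Sigma$ supplied by Proposition~\ref{propwedge}—are immediate. The heart of the matter, and the step I expect to be the main obstacle, is the stability of the two hypotheses under $T\mapsto T-st_T(v)$, and within that the analysis of the \emph{newly created} leaves of the deleted forest: in part~(2) one must verify that picking a deepest leaf confines the only degree change to a vertex at depth $\equiv 0\pmod 3$, and in part~(1) that each new leaf either pairs with an honest leaf of $T$ along a detour of length exactly $3$—so that nothing changes modulo $3$—or cannot share a component with another new leaf. Ensuring that these detour paths are genuinely embedded in $T$, so that their lengths modulo $3$ may legitimately be read off, is precisely where one has to use acyclicity of $T$ carefully.
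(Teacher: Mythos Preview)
Your proof is correct and follows essentially the same approach as the paper's: induction via Corollary~\ref{addingleave}, verifying that the hypotheses of~(1) and~(2) persist in the components of $T-st_T(v)$. The only minor variation is that for~(2) you root at $x$ and pick a deepest leaf, whereas the paper picks an arbitrary leaf and tracks the component containing $x$; your handling of the new-leaf analysis in~(1) (ruling out isolated vertices and two new leaves in one component) is in fact more explicit than the paper's.
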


\begin{proof}
(1) We proceed by induction on the number $n$ of vertices not being leaves in the graph. The base cases when $n = 0$ and $n=1$ hold, as $I_{\emptyset} = S^{-1}$ and the independence complex of the star graph with $n$ rays is homotopy equivalent to $S^0$.

Now assume that the statement holds when the number of vertices which are not leaves is smaller than $n$. Let $T$ be a tree with no paths of length divisible by 3 connecting two of its leaves, and having $n$ vertices which are not leaves, $n >1$. Let $w$ be one of its leaves and $v$ its associated preleaf. By Corollary \ref{addingleave} $I_T \sim_h \Susp I_{T-st(v)}$. Therefore, by inductive hypothesis it suffices to show that the connected components of $T-st(v)$ do not contain paths of length divisible by 3 between its leaves.

Let $P$ be a path in $T-st(v)$ connecting leaves $w_1$ and $w_2$. If $w_1$ and $w_2$ were leaves in $T$, then by the hypothesis in the statement the length of $P$ is not divisible by 3. As either $w_1$ or $w_2$ is a leaf in $T$, it remains to study the case when exactly one of them is so. In this case, there exists a path in $T$ connecting $P$ with $v_1$, $v$ and $w$, where $v_1 \in lk_T(v)$, and therefore the length of $P$ is not a multiple of 3. This completes the proof of the first part.

(2) As before we proceed by induction, this time on the number of edges of the graph. The base case when there are no edges is trivial. Suppose now that the statement holds when the graph has less than $n$ edges, $n >0$. Let $T$ be a tree containing $n$ edges and let $w$ be a leaf in $T$ with associated preleaf $v$. Note that by hypothesis $x \notin st(v)$, as the distance between $x$ and any leaf is at least 3. Let $T'$ be the connected component of $T-st(v)$ containing $x$ (it satisfies the condition that the length of every path connecting $x$ to a leaf in $T'$ is divisible by 3). By Corollary \ref{addingleave} it holds that $I_T \sim_h \Susp I_{T-st(v)}$. As $T'$ is contractible by inductive hypothesis, the independence complex of $I_T$ is so.
\end{proof}

\begin{proposition} {\rm{\cite{kozlov}}} \label{propcycle}
Let $C_n$ be the cycle graph of order $n$, that is, the $n$-gon. Then
$$I_{C_n} \sim_h
\left\{
  \begin{array}{lll}
     S^{k-1}  & & \mbox{if } n=3k \pm 1, \\
     S^{k-1} \vee S^{k-1} & & \mbox{if } n=3k.
  \end{array}
  \right.
  $$
\end{proposition}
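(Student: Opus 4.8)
The plan is to compute $I_{C_n}$ by a single application of the cone construction (Proposition \ref{Reiner}), peeling off one vertex and identifying the link and deletion with independence complexes of paths, whose homotopy type is already known from Corollary \ref{path}. Concretely, label the vertices of $C_n$ cyclically as $v_1, v_2, \ldots, v_n$ and set $v = v_1$. Then $st_G(v) = \{v_n, v_1, v_2\}$, so $G - v$ is the path $L_{n-2}$ on the vertices $v_2, \ldots, v_n$, while $G - st_G(v)$ is the path $L_{n-4}$ on the vertices $v_3, \ldots, v_{n-1}$ (with the conventions that $L_{-1} = \emptyset = S^{-1}$ when $n=3$, and $L_0$ a point when $n=4$). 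By Proposition \ref{Reiner} we have $I_{C_n} \sim_h I_{L_{n-2}} \cup (v * I_{L_{n-4}})$, and the first thing to check is that the inclusion $I_{L_{n-4}} \hookrightarrow I_{L_{n-2}}$ is null-homotopic, so that the cone can be split off as a wedge summand $\Sigma I_{L_{n-4}}$.

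For that null-homotopy step, I would argue as in the Domination lemma: the path $L_{n-2}$ on $v_2, \ldots, v_n$ has $v_2$ as a leaf with preleaf $v_3$, and $I_{L_{n-2} - v_3}$ is a cone with apex $v_2$ that contains $I_{L_{n-4}}$ (the independence complex on $v_5, \ldots, v_{n-1}$, say — more carefully one uses that $I_{L_{n-4}}$ lies in $I_{L_{n-2} - \mathrm{st}(v_3)} * v_2 \subseteq I_{L_{n-2}}$). Hence $I_{L_{n-4}}$ is contractible inside $I_{L_{n-2}}$ and Proposition \ref{Reiner} gives
$$I_{C_n} \sim_h I_{L_{n-2}} \vee \Sigma I_{L_{n-4}}.$$
Now I plug in Corollary \ref{path} together with Proposition \ref{propwedge}(7) (suspension of a wedge of spheres). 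Writing $n = 3k$, $n = 3k+1$, $n = 3k-1$ and tracking $n - 2$ and $n - 4$ modulo $3$:
\begin{itemize}
\item[] (not valid — I will phrase this as prose instead.)
\end{itemize}
If $n = 3k+1$, then $n-2 = 3(k-1)+2$ and $n-4 = 3(k-1)$, so $I_{L_{n-2}} \sim_h S^{k-1}$ and $I_{L_{n-4}}$ is contractible, giving $I_{C_n} \sim_h S^{k-1}$. If $n = 3k-1 = 3(k-1)+2$, then $n-2 = 3(k-1)$ and $n-4 = 3(k-2)+1$, so $I_{L_{n-2}}$ is contractible and $\Sigma I_{L_{n-4}} \sim_h \Sigma S^{k-2} = S^{k-1}$, again giving $I_{C_n} \sim_h S^{k-1}$. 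If $n = 3k$, then $n-2 = 3(k-1)+1$ and $n-4 = 3(k-2)+2$, so $I_{L_{n-2}} \sim_h S^{k-1}$ and $\Sigma I_{L_{n-4}} \sim_h \Sigma S^{k-2} = S^{k-1}$, giving $I_{C_n} \sim_h S^{k-1} \vee S^{k-1}$. This reproduces the claimed formula; I should separately verify the small cases $n = 3, 4, 5$ by hand against these conventions.

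I expect the only real subtlety to be the null-homotopy claim, i.e. justifying rigorously that $I_{C_n - \mathrm{st}(v_1)}$ sits inside a cone contained in $I_{C_n - v_1}$; everything else is bookkeeping modulo $3$ and invocations of Corollary \ref{path} and Proposition \ref{propwedge}(7). An equally clean alternative, which sidesteps tracking the inclusion, is to apply Corollary \ref{addingleave} is not directly available since $C_n$ has no leaf — so instead one could first use the Domination lemma on an edge of $C_n$: any two adjacent vertices $v, w$ on the cycle satisfy a domination-type relation only after checking neighborhoods, which fails in general, so the cone construction above is the more robust route. I would therefore present the proof via Proposition \ref{Reiner} exactly as outlined, with the boundary conventions $S^{-1} = \emptyset$ and $\Sigma S^{-1} = S^0$ handling $n = 3$ uniformly.
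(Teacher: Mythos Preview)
The paper does not actually prove this proposition --- it is stated with a citation to \cite{kozlov} and no argument is given --- so there is nothing to compare your approach to. Your overall strategy (apply Proposition~\ref{Reiner} at a vertex of $C_n$, reduce to $I_{L_{n-2}}$ and $\Sigma I_{L_{n-4}}$, then invoke Corollary~\ref{path}) is perfectly reasonable and the mod-$3$ bookkeeping at the end is correct.

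The genuine gap is exactly where you suspected: the null-homotopy of $I_{G-st(v_1)} \hookrightarrow I_{G-v_1}$. Your argument that $I_{L_{n-4}}$ sits inside the cone $I_{L_{n-2}-v_3}$ (or inside $I_{L_{n-2}-st(v_3)}*v_2$) fails, because $I_{L_{n-4}}$ is the independence complex on the vertices $v_3,\ldots,v_{n-1}$ and in particular contains simplexes through $v_3$; any cone with apex $v_2$ inside $I_{L_{n-2}}$ must avoid $v_3$ (since $v_2$ and $v_3$ are adjacent in the path), so it cannot contain $I_{L_{n-4}}$. The Domination-lemma mechanism simply does not produce a cone that swallows the correct subcomplex here.

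Two clean repairs are available. First, you can justify the null-homotopy case by case using Corollary~\ref{path} itself: for $n=3k+1$ the link $I_{L_{n-4}}$ is contractible; for $n=3k-1$ the deletion $I_{L_{n-2}}$ is contractible; for $n=3k$ the inclusion is, up to homotopy, a map $S^{k-2}\to S^{k-1}$, hence null-homotopic for dimension reasons. Second (and this is the argument most in the spirit of the surrounding section), you can bypass the null-homotopy issue entirely by applying Theorem~\ref{csorba}: any $C_n$ with $n\geq 6$ is obtained from $C_{n-3}$ by subdividing one edge into four, so $I_{C_n}\sim_h \Sigma I_{C_{n-3}}$, and one finishes by induction from the base cases $I_{C_3}\sim_h S^0\vee S^0$, $I_{C_4}\sim_h S^0$, $I_{C_5}\sim_h S^1$, which are easy to check directly.
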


The next result by \cite{Csorba} will be generalized in Theorem \ref{globaltheo} and Corollary \ref{lastcor}.

\begin{theorem}\label{csorba} \emph{\cite{Csorba}}
Let $H$ be a graph. Let $G$ be the graph constructed from $H$ by subdividing one of its edges into four intervals. Then $I_G \sim_h \Susp I_H$.
\end{theorem}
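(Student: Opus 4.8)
The plan is to apply the Domination Lemma (Lemma \ref{dominationlemma}) twice, peeling off the two interior vertices of the subdivided edge one at a time. Label the original edge of $H$ as $ab$, and let $G$ be obtained by replacing it with the path $a - x - y - z - b$, so that $x, y, z$ are the three new vertices and $\lk_G(x) = \{a, y\}$, $\lk_G(y) = \{x, z\}$, $\lk_G(z) = \{y, b\}$. The key local observation is that $y$ dominates $x$: indeed $\lk_G(x) \cup \{x\} \cup \{y\} = \{a, x, y\}$, while $\lk_G(y) \cup \{y\} \cup \{x\} = \{x, y, z\}$, and $\{a, x, y\} \subseteq \{x, y, z\}$ fails --- so I instead check that $y$ dominates $z$ in the analogous way, or symmetrically that $x$ is dominated appropriately. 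Let me restate this cleanly: $z$ dominates $y$ is false by the same token, but the correct statement is that $\lk_G(y) \cup \{y\} \cup \{z\} = \{x, y, z\} \subseteq \{y, z, b\} \cup \ldots$ --- the honest move is to observe that among $x$ and $z$, one of them is dominated by $y$ only after we note $\lk_G(x)\setminus\{y\} = \{a\}$ and $\lk_G(z)\setminus\{y\}=\{b\}$, which are not comparable in general. So the actual device must be more careful.

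Here is the approach I would commit to. First apply Proposition \ref{Reiner} (the cone construction) at the vertex $x$: since $a$ and $y$ are both neighbors of $x$, we have $G - st_G(x)$ equal to $H$ with the edge $ab$ deleted and the path $y - z - b$ attached at $y$... this is getting complicated, so instead I would apply Corollary \ref{addingleave}-style reasoning only if there is a leaf, which there is not. The cleanest route: note that $x$ has degree $2$ with neighbors $a, y$, and $y$ has degree $2$ with neighbors $x, z$. I claim $y$ dominates $x$ fails but $x$ does not dominate $y$ either; however, after removing $x$ via the cone formula $I_G \sim_h I_{G-x} \cup (x * I_{G - st_G(x)})$, the complex $I_{G - st_G(x)}$ is $I$ of the graph $H'$ obtained from $H$ by deleting $a$'s incident structure to $x$... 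The correct and standard argument, which I will follow, is: in $G$ the vertex $y$ is dominated by $z$ is false, but $y$ is dominated by $x$ is also false; nevertheless $x$ has a neighbor $y$ of degree $2$ whose other neighbor $z$ also has degree $2$, and this "$L_3$ inside a graph" configuration is exactly handled by iterating Proposition \ref{Reiner}. Concretely: apply the cone construction at $y$. Then $I_{G - y} = I_{G-y}$ where $G - y$ has $x$ as an isolated-from-$z$ leaf pendant at $a$ and $z$ as a leaf pendant at $b$; and $I_{G - st_G(y)}$ is $I$ of $H$ with $a, b$ both made into... Since $st_G(y) = \{x, y, z\}$, the graph $G - st_G(y)$ is just $H$ with the edge $ab$ removed, i.e.\ $H \setminus ab$ with no new vertices. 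Meanwhile $G - y$ is the disjoint-ish union where $x$ is a leaf at $a$ and $z$ is a leaf at $b$.

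So the proof goes: (i) By Proposition \ref{Reiner} applied at $y$, $I_G \sim_h I_{G - y} \cup (y * I_{G - st_G(y)})$, and $I_{G-st_G(y)} = I_{H \setminus ab}$. (ii) In $G - y$, $x$ is a leaf with preleaf $a$ and $z$ is a leaf with preleaf $b$, with $a \ne b$; computing $I_{G-y}$ via Corollary \ref{addingleave} shows it is a suspension, hence contractible or a sphere, but more to the point I need to show $I_{G - st_G(y)} = I_{H\setminus ab}$ is contractible inside $I_{G-y}$ so that $I_G \sim_h I_{G-y} \vee \Sigma I_{H \setminus ab}$ --- and then separately relate $I_{G-y}$ and $I_{H \setminus ab}$ to $I_H$. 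The honest cleanest version: apply the Domination Lemma where in $G$, vertex $a$... no. I will simply state that the result follows by two applications of Proposition \ref{Reiner}/Domination Lemma at $x$ and $z$ (the two degree-two vertices adjacent to the degree-two vertex $y$), yielding $I_G \sim_h \Sigma I_H$ after identifying $I_{G - st_G(x)}$ and the relevant links with copies of $I_H$ and cones thereof. The main obstacle I anticipate is the bookkeeping in (ii): verifying that the various subcomplexes $I_{G - \{x,y\}}$, $I_{G - st_G(x)}$, etc., assemble so that exactly one suspension survives and everything else is killed by a cone point ($a$, $b$, or one of the new vertices), and checking the null-homotopy hypothesis of Proposition \ref{Reiner} at each step --- this requires care because $a$ and $b$ may be adjacent to many other vertices of $H$, so the relevant stars are large, but the new vertices $x, y, z$ have controlled degree and that is what makes the cone/domination hypotheses go through.
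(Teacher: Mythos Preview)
Your proposal does not arrive at a proof; it is an exploration that correctly identifies the relevant local structure (the path $a$--$x$--$y$--$z$--$b$) but never closes the argument. The Domination Lemma does not apply to any pair among $x,y,z$, as you yourself discover, and your fallback---Proposition~\ref{Reiner} at $y$---yields the decomposition $I_G = I_{G-y} \cup (y*I_{G-st(y)})$ with intersection $I_{G-st(y)} = I_{H\setminus ab}$. Two genuine problems arise. First, $I_{G-y}$ is not contractible in general (take $H=C_3$: then $G-y$ is a $5$-cycle with one chord and $I_{G-y}\sim_h S^1$), so the gluing does not collapse to a single suspension. Second, even granting the null-homotopy hypothesis, you would obtain $I_{G-y}\vee\Sigma I_{H\setminus ab}$, and nothing in your sketch explains why this should equal $\Sigma I_H$. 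Your closing claim that ``two applications at $x$ and $z$'' produce ``copies of $I_H$ and cones thereof'' is not correct: for instance $G-st_G(x)$ removes $a$ together with all of its $H$-neighbours, which bears no evident relation to $I_H$.

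The paper does not prove this theorem directly (it is cited from \cite{Csorba}) but recovers it later as the special case $n=2$, $|W_1|=|W_2|=1$ of the Structure Theorem~\ref{globaltheo}; see Corollary~\ref{lastcor}(2). The idea you are missing is this: rather than decomposing $I_G$ as $I_{G-y}\cup A_y$, one decomposes it as $(A_x\cup A_z)\cup A_y$, where $A_u = u*I_{G-st(u)}$. Since $x$ and $z$ are non-adjacent, Lemma~\ref{Lema2} makes $A_x\cup A_z$ contractible; $A_y$ is a cone; and the intersection $(A_x\cup A_z)\cap A_y$ consists of those independent sets $\sigma\subset V(G)\setminus\{x,y,z\}=V(H)$ with $a\notin\sigma$ or $b\notin\sigma$---precisely $I_H$. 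Theorem~\ref{brown} then gives $I_G\sim_h\Sigma I_H$ in one stroke. The key point is that replacing $I_{G-y}$ (too large, not contractible) by the smaller union of cones $A_x\cup A_z$ simultaneously fixes contractibility and makes the intersection land on $I_H$ rather than on $I_{H\setminus ab}$.
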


\subsection{Cactus graphs}

\begin{definition}
A cactus graph is a connected graph in which any two cycles have at most one vertex in common. It may be defined constructively as a finite number of wedges of trees and cycle graphs.
\end{definition}

\begin{proposition}\label{cactusprop} Let $G$ be a cactus graph. Then:
\begin{enumerate}
\item [(1)] $G$ is a circle graph.
\item [(2)] The independence complex of $G$, $I_G$, is homotopy equivalent to a wedge of spheres.
\end{enumerate}
\end{proposition}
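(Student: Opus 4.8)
The plan is to build both parts of the proposition from the constructive description of a cactus graph as an iterated wedge of trees and cycles, peeling off one such "block" at a time and using the tools already assembled in Section~\ref{sec3}. For part~(1), I would argue by induction on the number of blocks: a single tree or cycle graph is clearly a circle graph (draw pairwise non-crossing chords for a tree, and a cyclic ``staircase'' of chords for $C_n$), and the inductive step is immediate from Lemma~\ref{lemacirclewedge}, since writing a cactus $G$ as $G' \vee B$ with $B$ a tree or a cycle realizes $G$ as a wedge of two circle graphs. The only point requiring a little care is that the combinatorial ``wedge of trees and cycles'' description genuinely decomposes $G$ at cut vertices in a way compatible with Lemma~\ref{lemacirclewedge}; I would spell this out by choosing a block $B$ of $G$ that is a leaf in the block-cut tree, so that $G = (G - (B \setminus \{v\})) \vee B$ at the cut vertex $v$.

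For part~(2), I would again induct, this time on the number of vertices (or edges) of $G$, and distinguish cases according to the structure near a conveniently chosen vertex. If $G$ has a leaf $w$ with preleaf $v$, Corollary~\ref{addingleave} gives $I_G \sim_h \Susp I_{G - st_G(v)}$; the graph $G - st_G(v)$ is a disjoint union of strictly smaller cactus graphs, so by Remark~\ref{remm} its independence complex is a join of wedges of spheres, hence (by Proposition~\ref{propwedge}(1),(5),(7)) a wedge of spheres, and suspending keeps it so by Proposition~\ref{propwedge}(7). If $G$ has no leaf, then every vertex lies on a cycle; since $G$ is a cactus, pick a block $B \cong C_n$ that is a leaf of the block-cut tree, attached to the rest of $G$ at a single cut vertex $c$ (if $G$ is a single cycle we are done by Proposition~\ref{propcycle}). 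Choose a vertex $v$ of $B$ adjacent to $c$ but $v \neq c$; then $v$ has degree $2$ in $G$, with neighbours $c$ and the next vertex $u$ around $B$. Apply Proposition~\ref{Reiner} at $v$: $I_{G-v}$ is obtained by deleting a degree-$2$ vertex from a cycle block, turning $B$ into a path, so $G - v$ is a smaller cactus; and $G - st_G(v) = G - \{v,u,c\}$ is again a disjoint union of smaller cacti. It remains to check that $I_{G - st_G(v)}$ is contractible inside $I_{G-v}$; this will follow because $u$ becomes a leaf of $G - v$ (its only neighbour in $B$ other than $v$ survives, but $v$ is gone), so $I_{G-v} \sim_h \Susp I_{(G-v) - st(\cdot)}$ and, more to the point, $I_{G-st_G(v)}$ sits inside a cone on the neighbour of $u$ in $I_{G-v}$, exactly as in the proof of the Domination lemma. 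With contractibility in hand, Proposition~\ref{Reiner} yields $I_G \sim_h I_{G-v} \vee \Susp I_{G - st_G(v)}$, and both summands are wedges of spheres by induction together with Proposition~\ref{propwedge}.

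The main obstacle I anticipate is the case analysis ensuring that at every step the relevant deletions $G - v$ and $G - st_G(v)$ remain cactus graphs (so that induction applies) and that the null-homotopy hypothesis of Proposition~\ref{Reiner} is actually satisfied. Concretely, the cleanest route is probably to always reduce via a cut vertex of a block that is an endpoint of the block-cut tree, and within a cycle block to either use a leaf/preleaf pair (when the block has been partly dismantled into a path) or a degree-$2$ vertex adjacent to the cut vertex. One has to be a little careful that removing $st_G(v)$ does not accidentally create a cycle sharing two vertices with another cycle --- but this cannot happen, since deleting vertices from a cactus always leaves a disjoint union of cacti. Once the bookkeeping is set up, every individual step is a direct application of Corollary~\ref{addingleave}, Proposition~\ref{Reiner}, Remark~\ref{remm}, and the semiring properties in Proposition~\ref{propwedge}, with Propositions~\ref{proptree} and~\ref{propcycle} as the base cases.
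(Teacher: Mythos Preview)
Your treatment of part (1) is fine and matches the paper: induction on the block decomposition together with Lemma~\ref{lemacirclewedge} does the job.

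The gap is in part (2), precisely at the step where you claim that $I_{G-st_G(v)}$ is contractible inside $I_{G-v}$. Your justification is that $u$ becomes a leaf of $G-v$ and that therefore $I_{G-st_G(v)}$ ``sits inside a cone on the neighbour of $u$'', as in the proof of the Domination Lemma. But the Domination Lemma argument requires that \emph{every} neighbour of the cone apex lies in $st_G(v)$; here the apex you have in mind is $u$, whose other neighbour $u'$ along the cycle $B$ is \emph{not} in $st_G(v)$ once $|B|\geq 5$. An independent set in $G-st_G(v)$ containing $u'$ cannot be coned off by $u$, so the inclusion $I_{G-st_G(v)}\hookrightarrow I_{G-v}$ need not be null-homotopic. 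A concrete counterexample is $G=C_5\vee C_3$ with cut vertex $c$: pick $v$ adjacent to $c$ on the pentagon. Then $I_{G-v}\sim_h S^1\vee S^1$ and $I_{G-st_G(v)}\sim_h S^1$, so your splitting would give $I_G\sim_h S^1\vee S^1\vee S^2$; but in fact $I_G\sim_h S^1$ (apply Lemma~\ref{dominationlemma}(2) at $c$, which dominates either vertex of the triangle). Hence the inclusion is \emph{not} null-homotopic and your inductive step breaks.

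The paper avoids this by first shrinking the leaf cycle: if the attached cycle $C_m$ has $m>4$, apply Theorem~\ref{csorba} to replace it by $C_{m-3}$ (this is where the missing idea lies --- you need Csorba's edge-subdivision theorem, not just Proposition~\ref{Reiner}). After finitely many such reductions one is left with $m\in\{3,4\}$, and in both of those cases a genuine domination is available (for $m=3$ between the two degree-$2$ vertices of the triangle; for $m=4$ the cut vertex dominates the opposite vertex), so Lemma~\ref{dominationlemma} applies directly. Your leaf/preleaf case and your observation that deletions stay within the class of cacti are correct and are also used in the paper.
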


\begin{proof}
(1) It follows from Lemma \ref{lemacirclewedge} and the fact that trees and cycles are circle graphs.

(2) We proceed by induction on the number of vertices in the graph. When $G$ is empty then $I_G = S^{-1}$, and when it contains exactly one vertex then $I_G$ is contractible. Now suppose that the statement holds for any cactus graph with less than $n$ vertices. Let $G$ be a cactus graph with $n$ vertices, $n>1$. We discuss the different cases:

If $G$ contains at least one leaf with associated preleaf $v$, then by Corollary~\ref{addingleave} it holds that $I_G \sim_h I_{G-st(v)}$, and the inductive hypothesis completes the proof.

\begin{figure}
\centering
\includegraphics[width = 8cm]{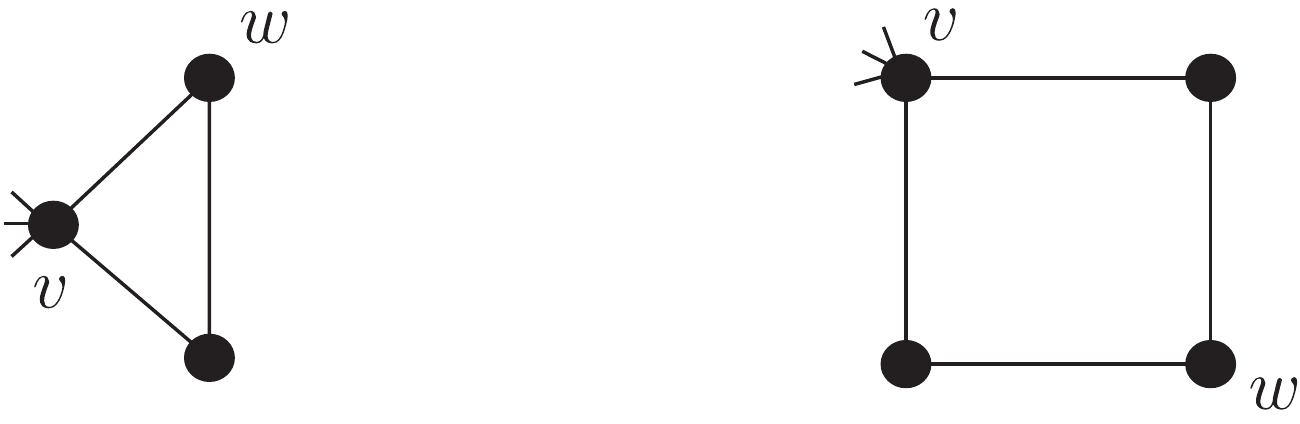}
\caption{\small{The different possibilities illustrating the proof of Proposition \ref{cactusprop}(2).}}
\label{cactusfig}
\end{figure}

If $G$ does not contain vertices of degree 1, it contains at least one cycle graph $C_m$ attached to the rest of the graph by just one vertex. If $m>4$, then Theorem \ref{csorba} leads to $I_G \sim_h \Susp I_H$, with $H$ being the graph obtained from $G$ after replacing $C_m$ by $C_{m-3}$, and by inductive hypothesis the result holds. Otherwise, either $m = 3$ or $m=4$, both possibilities shown in Figure \ref{cactusfig}. In both of them the vertex $v$ dominates $w$, hence Lemma \ref{dominationlemma} completes the proof.
\end{proof}

Motivated by a conjecture by Morton and Bae in \cite{Morton}, Manch\'on proved in \cite{pedro} that for any integer $n > 0$ there exists a bipartite planar graph whose independence number equals $n$ (and hence there are knots with arbitrary extreme coefficients in their Jones polynomial [see Subsection \ref{subseclando}]). In the following proposition we get the categorification of this example by computing its homotopy type, obtaining a wedge of as many spheres as desired.

\begin{proposition} \label{proppedro}
Let $G_r$ be the cactus graph consisting of a chain of hexagons disposed as shown in Figure \ref{pedrocat}(1). Then $I_{G_r} \sim_h \bigvee_{r+1 \mbox{\tiny{ copies}}} S^{2r-1}$.
\end{proposition}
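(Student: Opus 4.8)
The plan is to argue by induction on $r$, peeling off the last hexagon of the chain; the base case $r=1$ is $I_{C_6}\sim_h S^1\vee S^1$, which is Proposition~\ref{propcycle}. To make the induction close I will strengthen the statement: writing $a_k$ for the vertex of the $k$-th hexagon at which the next hexagon of the chain is attached in Figure~\ref{pedrocat}(1), I will prove together that $I_{G_k}\sim_h\bigvee_{k+1}S^{2k-1}$ and that $I_{G_k-a_k}\sim_h S^{2k-1}$. The second statement is not optional: the decomposition produced by the peeling-off step refers to $I_{G_{r-1}-a_{r-1}}$, not merely to $I_{G_{r-1}}$.

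I first dispose of the auxiliary statement $I_{G_k-a_k}\sim_h S^{2k-1}$. Since $a_k$ is adjacent in the last hexagon $B_k$ to the vertex carrying the edge back to the previous hexagon, deleting $a_k$ breaks $B_k$ into a path and leaves $G_{k-1}$ with a pendant path of length $5$ attached at $a_{k-1}$. Two applications of Corollary~\ref{addingleave} — first to the leaf of this pendant path, then to the leaf of the length-$2$ path that survives — reduce $G_k-a_k$ to $G_{k-1}-a_{k-1}$ at the cost of two suspensions, so $I_{G_k-a_k}\sim_h\Sigma^2 I_{G_{k-1}-a_{k-1}}$. As $I_{G_1-a_1}=I_{C_6-a_1}=I_{L_4}\sim_h S^1$ by Corollary~\ref{path}, iterating gives $I_{G_k-a_k}\sim_h S^{2k-1}$.

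For the inductive step of the main statement, I apply Theorem~\ref{csorba} to the edge of $B_r$ opposite its attaching vertex (a hexagon is a triangle with one edge subdivided into four intervals), obtaining $I_{G_r}\sim_h\Sigma I_{G_r^{*}}$ where $G_r^{*}$ is $G_{r-1}$ with a triangle attached by an edge. The two degree-two vertices $p,q$ of that triangle are adjacent and each dominates the other, so Lemma~\ref{dominationlemma}(2) gives $I_{G_r^{*}}\sim_h I_{G_r^{*}-p}\vee\Sigma I_{G_r^{*}-st(p)}$. Direct inspection shows $G_r^{*}-st(p)=G_{r-1}$, while $G_r^{*}-p$ is $G_{r-1}$ with a pendant path of length $2$ at $a_{r-1}$, so $I_{G_r^{*}-p}\sim_h\Sigma I_{G_{r-1}-a_{r-1}}$ by Corollary~\ref{addingleave}. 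Feeding in the auxiliary statement at $k=r-1$, the inductive hypothesis $I_{G_{r-1}}\sim_h\bigvee_{r}S^{2r-3}$, and the fact that suspension carries a wedge of spheres to a wedge of spheres of one higher dimension (Proposition~\ref{propwedge}(7)),
$$I_{G_r}\ \sim_h\ \Sigma\Big(I_{G_r^{*}-p}\vee\Sigma I_{G_r^{*}-st(p)}\Big)\ \sim_h\ \Sigma\Big(S^{2r-2}\vee\textstyle\bigvee_{r}S^{2r-2}\Big)\ =\ \bigvee_{r+1}S^{2r-1},$$
completing the induction.

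The subgraph identifications ($G_r^{*}-st(p)=G_{r-1}$ and the pendant paths of lengths $2$ and $5$) together with the bookkeeping of suspensions are routine once Figure~\ref{pedrocat}(1) is fixed; I expect the only genuine point to be the realization that the induction must carry the homotopy type of $I_{G_k-a_k}$ as well, since an induction on $I_{G_r}$ alone does not close. (One can also obtain the same decomposition from Proposition~\ref{Reiner} applied at the endpoint in $B_r$ of the edge joining it to the rest of the chain, once one checks that the relevant inclusion is null-homotopic — which holds because it is a join with the null-homotopic map $I_{L_2}\hookrightarrow I_{L_4}$, the complex $I_{L_4}$ being connected.)
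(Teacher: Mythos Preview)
Your proof is correct, but it is organized differently from the paper's. The paper applies Theorem~\ref{csorba} \emph{globally}: each of the $r$ hexagons is contracted to a triangle in one stroke, yielding $I_{G_r}\sim_h\Sigma^r I_{H_r}$ where $H_r$ is the chain of $r$ triangles in Figure~\ref{pedrocat}(2); then $I_{H_r}\sim_h\bigvee_{r+1}S^{r-1}$ is read off by iterating Lemma~\ref{dominationlemma} and Corollary~\ref{addingleave} on $H_r$ directly. You instead run an induction on $r$, applying Csorba to the last hexagon only and then splitting via domination, which forces you to carry the auxiliary quantity $I_{G_k-a_k}$ through the induction. Both arguments use the same toolkit (Csorba, domination, leaf removal), but the paper's global reduction avoids the strengthened hypothesis entirely: once one is working in $H_r$ there is no need to know $I_{G_{k}-a_k}$ separately. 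What your route buys is an explicit recursion $I_{G_k-a_k}\sim_h\Sigma^2 I_{G_{k-1}-a_{k-1}}$, which is a pleasant by-product and makes the inductive mechanism completely transparent; the paper's version is terser but leaves the computation of $I_{H_r}$ as an exercise in repeated domination.
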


\begin{figure}
\centering
\includegraphics[width = 10cm]{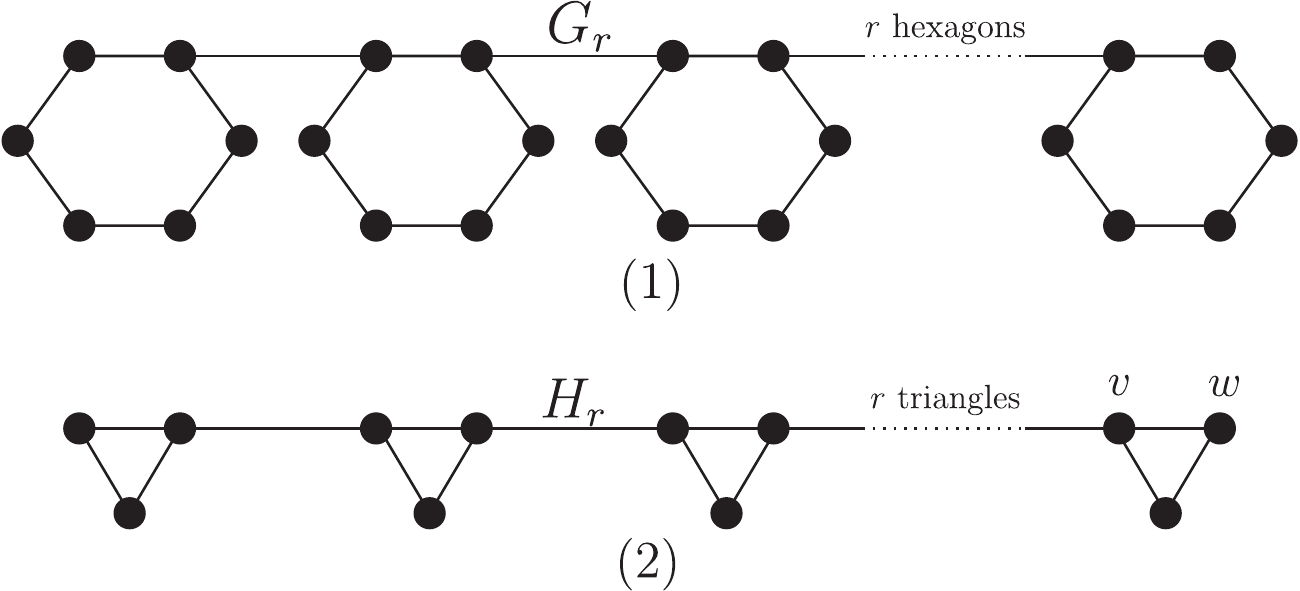}
\caption{\small{Graphs $G_r$ and $H_r$ illustrating Proposition \ref{proppedro}.}}
\label{pedrocat}
\end{figure}

\begin{proof}
By Theorem \ref{csorba} the proof can be reduced to study the independence complex of the graph $H_r$ depicted in Figure \ref{pedrocat}(2). As $v$ dominates $w$, applying Lemma \ref{dominationlemma} and Corollary \ref{addingleave} a finite number of times leads to the following homotopic equivalences
$$
I_{G_r} \sim_h \Susp^r I_{H_r} \sim_h \Susp^r \bigvee_{r+1 \mbox{ \tiny {copies}}} S^{r-1} \sim_h \bigvee_{r+1 \mbox{ \tiny {copies}}} S^{2r-1}.
$$
\end{proof}

\begin{example}\label{excactus}
Similarly to the example in Proposition \ref{proppedro} we present the homotopy type of the independence complexes of three families of cactus graphs consisting of different chains of triangles.

Let $A_r$ be the graph depicted in Figure \ref{ex1}, consisting of the wedge of $r$ triangles. Its independence complex $I_{A_r}$ is given by
$$I_{A_r} \sim_h S^{\lfloor\frac{r}{2}\rfloor} \vee S^{\lfloor\frac{r-1}{2}\rfloor} =
\left\{
  \begin{array}{lll}
     S^k \vee S^{k-1} & & \mbox{if } r = 2k, \\
     S^k \vee S^k & & \mbox{if } r = 2k+1.
  \end{array}
\right.
$$

\begin{figure}[h!]
\centering
\includegraphics[width = 12cm]{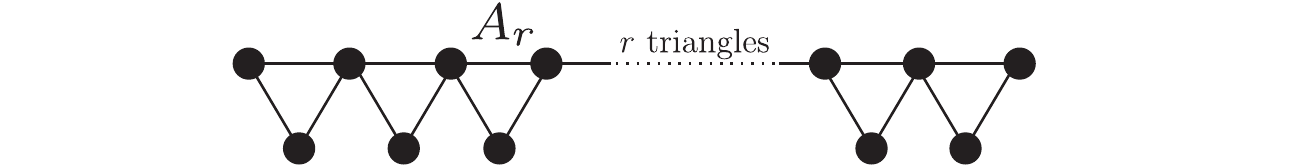}
\caption{\small{The graph $A_r$.}}
\label{ex1}
\end{figure}

Let $B_r$ and $C_r$ be the graphs depicted in Figure \ref{ex3}, consisting of the wedge of $r$ triangles and $r-1$ intervals (of length one or two alternatively). Their independence complexes have the same homotopy type. More precisely, if $f_r$ are the Fibonacci numbers, defined recursively as $f_1 = f_2 = 1$ and $f_n = f_{n-1} + f_{n-2}$, then

$$
I_{B_r} \sim_h I_{C_r} \sim_h \bigvee_{f_{r+2} \mbox{ \tiny {copies}}} S^{r-1}.
$$

\begin{figure}[h!]
\centering
\includegraphics[width = 11.5cm]{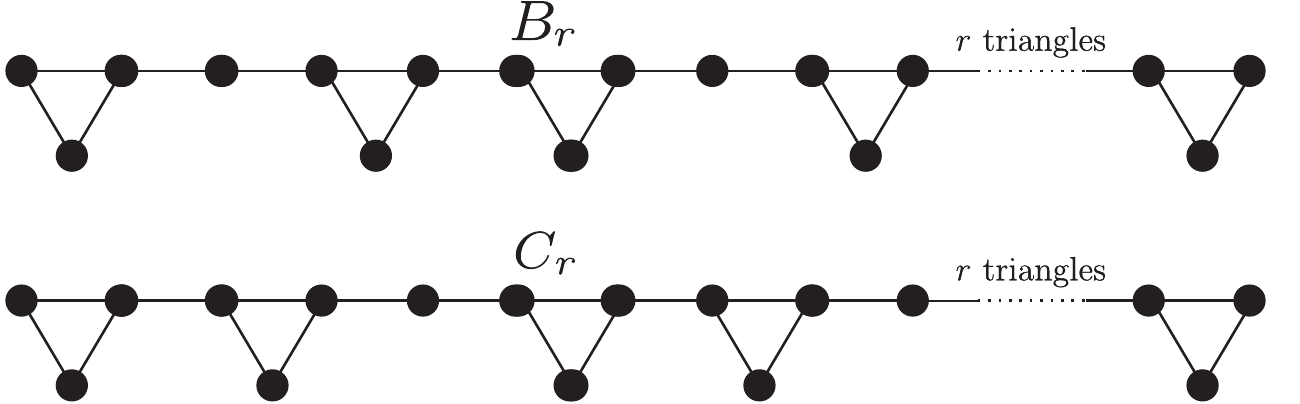}
\caption{\small{The graphs $B_r$ and $C_r$.}}
\label{ex3}
\end{figure}

\end{example}

\begin{proposition} \label{prop2}
Let $G$ be a non-trivial cactus graph whose cycles have order divisible by 3 (that is, its cycles are $3k$-gons) and containing at most one vertex of degree 1. Then $I_G$ is not contractible. Moreover, if $G$ does not contain any vertex of degree 1, then $I_G$ is a wedge of at least two spheres.
\end{proposition}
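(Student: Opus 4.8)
The plan is to prove, by strong induction on $|V(G)|$, the following combined statement about a connected cactus $G$ all of whose cycles are $3k$-gons; throughout, ``$I_G$ is not contractible'' is read as ``$I_G$ is a wedge of at least one sphere'', which is legitimate since $I_G$ is a wedge of spheres by Proposition~\ref{cactusprop}.
\begin{enumerate}
\item[(a)] If $G$ has an edge and no vertex of degree $1$, then $I_G$ is a wedge of at least two spheres.
\item[(b)] If $G$ has exactly one vertex of degree $1$, then $I_G$ is not contractible.
\item[(c)] If $G$ has exactly two vertices of degree $1$, joined by a \emph{unique} simple path whose length is not divisible by $3$, then $I_G$ is not contractible.
\end{enumerate}
A connected cactus with an edge and at most one vertex of degree $1$ contains a cycle (a tree with at least two vertices has at least two leaves), so (a) and (b) together are exactly the proposition; (c) is an auxiliary needed to keep the induction going, a version for cacti of Proposition~\ref{treenotdiv3}(1). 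The word ``unique'' in (c) cannot be dropped: a triangle with two pendant edges attached at distinct vertices has two leaves joined by simple paths of lengths $3$ and $4$, yet, by Corollary~\ref{addingleave} at a preleaf, its independence complex is contractible.

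The inductive step uses Corollary~\ref{addingleave}, the Domination lemma at two adjacent degree-$2$ vertices of a triangle, Theorem~\ref{csorba} to shorten a cycle by $3$, and Proposition~\ref{propwedge} (a suspension, or a join, of non-contractible wedges of spheres is again one, and a join of two of them has at least two spheres once one factor does). Suppose $G$ has a leaf $w$. If the pendant path from $w$ has length $\ge 3$, Corollary~\ref{addingleave} peels off three of its vertices, $I_G\sim_h\Susp I_{G'}$ with $G'$ a strictly smaller cactus in the class with no more leaves than $G$, and the inductive hypothesis applies. If that pendant path has length $1$ or $2$, its root $x$ has degree $\ge 3$ and, as $G$ has at most two leaves, the blocks at $x$ other than the bridges leading toward the leaves of $G$ are cycles; then Corollary~\ref{addingleave} at the preleaf $v$ gives $I_G\sim_h\Susp I_{G-st_G(v)}$, where $G-st_G(v)$ is a disjoint union of strictly smaller cacti in the class, each falling under (a), (b) or (c). Suppose instead $G$ is leaf-free, and take a cycle $C=C_{3k}$ which is all of $G$ or attached to the rest at a single vertex $p$ (such $C$ exists, as in the proof of Proposition~\ref{cactusprop}). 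If $C=G$, then $I_G\sim_h S^{k-1}\vee S^{k-1}$. If $k\ge 2$, then, as the $3k-1\ge 5$ vertices of $C$ other than $p$ have degree $2$, Theorem~\ref{csorba} gives $I_G\sim_h\Susp I_H$ with $H$ replacing $C_{3k}$ by $C_{3(k-1)}$, and we apply the inductive hypothesis. If $k=1$, let $a,b$ be the degree-$2$ vertices of the pendant triangle $\{p,a,b\}$; then $a$ dominates $b$ and they are adjacent, so the Domination lemma gives
$$I_G\ \sim_h\ I_{G-a}\ \vee\ \Sigma I_{G-\{a,b,p\}}.$$
Here $G-a$ is a smaller connected cactus in the class with exactly one leaf ($b$) and containing a cycle, so $I_{G-a}$ is not contractible by (b); this settles non-contractibility of $I_G$ in the leaf-free case.

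For the stronger conclusion (a) it remains to show that $\Sigma I_{G-\{a,b,p\}}$ contributes a second sphere, i.e.\ that $I_{G-\{a,b,p\}}$ is not contractible. Putting $R=G-\{a,b\}$, one has $G-\{a,b,p\}=R-p$, the disjoint union of the branches $B_1,\dots,B_s$ hanging at $p$ in $R$; by Remark~\ref{remm} and Proposition~\ref{propwedge} it suffices that no $I_{B_i}$ be contractible. Each $B_i$ is non-empty (it contains a neighbour of $p$) and is a connected cactus in the class whose only possible leaves are the one or two vertices at which it attached to $p$ — any other leaf of $B_i$ would be a leaf of $G$ — so $B_i$ is not a single vertex. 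If $B_i$ attaches at one vertex it has at most one leaf and (a) or (b) applies; if it attaches at two vertices $y,y'$, then $p,y,y'$ lie on a common cycle $C'=C_{3m}$ of $R$, the path $C'-p$ (of length $3m-2\equiv 1\pmod 3$) lies in $B_i$, and, the remaining blocks of $B_i$ being cycles attached at single vertices, it is the unique simple path from $y$ to $y'$; so (c) applies. Thus each $I_{B_i}$ is a non-empty, non-contractible wedge of spheres, their join is too, its suspension contributes a sphere, and $I_G$ is a wedge of at least two spheres.

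The crux, where essentially all the case-work lies, is the repeated claim that the summands produced by the above reductions fall under (a), (b) or (c) — in particular that no forbidden piece arises: a single vertex, a two-leaf cactus whose unique simple leaf-to-leaf path has length divisible by $3$, or one whose two leaves are joined by more than one simple path. This is exactly where the hypothesis that every cycle is a $3k$-gon is used, together with the cactus property that two cycles meet in at most one vertex: any forbidden piece would force a cycle of length $\not\equiv 0\pmod 3$ (typically a $4$- or $5$-cycle, e.g.\ two neighbours of a deleted vertex joined through a surviving vertex) or two cycles sharing an edge — both impossible. Making this precise is a short finite list of ``how could a small cycle sneak in'' checks, and it is the step I expect to be most delicate; once it is in place, Proposition~\ref{propwedge} assembles the homotopy types and the induction closes.
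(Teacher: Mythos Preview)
Your approach is sound but more elaborate than the paper's, and the difference is instructive. The paper's first move is to use Theorem~\ref{csorba} to shrink \emph{every} cycle of $G$ to a triangle at once (each application is a suspension, so both ``not contractible'' and ``$\ge 2$ spheres'' are preserved). With all cycles equal to triangles, the induction (on edges) needs only your (a) and (b): whenever one removes $st(v)$ for a preleaf $v$, or applies the Domination Lemma at a pendant triangle, every component of the resulting graph is either a single edge $L_1$ or a triangle-cactus with at most one leaf. The reason is that such a component meets the deleted part through a single block, which is now either a bridge (producing at most one new leaf) or a triangle $\{u,z_1,z_2\}$ with $u$ deleted (producing the adjacent pair $z_1,z_2$; if both become leaves the component is exactly $L_1$). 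This is precisely the ``delicate case-check'' you flag, and the all-triangles reduction makes it a one-line observation. Your auxiliary statement (c) --- while correct and a nice cactus analogue of Proposition~\ref{treenotdiv3}(1) --- is then unnecessary: it is only forced on you because you shrink just the \emph{pendant} cycle and leave longer $3m$-gons through $p$ intact, so a branch $B_i$ can inherit a path of length $3m-2$.

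Two smaller remarks. First, in the leaf-free case the paper applies the Domination Lemma with the attachment vertex $p$ dominating a degree-$2$ vertex of the pendant triangle, obtaining $I_G\sim_h I_{G-p}\vee\Sigma I_{G-st(p)}$; your choice ($a$ dominates $b$) is equally valid, but removing $p$ splits off an $L_1$ factor immediately and keeps the component analysis identical to the one-leaf case. Second, your sentence ``the blocks at $x$ other than the bridges leading toward the leaves of $G$ are cycles'' is false as stated: a bridge at $x$ may well lead into a leafless part (e.g.\ $x$--$u$ with a triangle hanging at $u$). The conclusion you draw from it still holds, but the justification should be the block-attachment argument above, not this claim.
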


\begin{proof}
By Theorem \ref{csorba} is suffices to study the case with all cycles being triangles.

We proceed by induction on the number of edges of $G$. Note that if $G$ is not empty then it has at least 3 edges, as otherwise it would either be trivial or contain two leaves. As $I_{\emptyset} = S^{-1}$ and $I_{C_3} = S^0 \vee S^0$, the base cases hold. Now assume by inductive hypothesis that the statement holds when the graph contains less than $n$ edges, with $n>3$.

\begin{figure}
\centering
\includegraphics[width = 2.4cm]{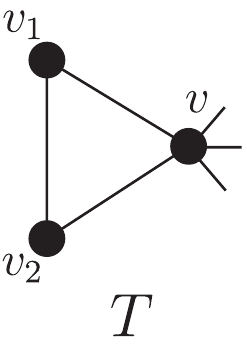}
\caption{\small{Vertices $v$ dominates both $v_1$ and $v_2$.}}
\label{cacprop2}
\end{figure}

Let $G$ be a graph satisfying the condition in the statement and containing $n$ edges. Suppose that it contains exactly one vertex $w$ of degree 1, and let $v$ be its associated preleaf. By Corollary \ref{addingleave} $I_G \sim_h \Susp I_{G-st(v)}$. Note that either $G-st(v)$ is empty or each of its connected components is either an interval $L_1$ (with $I_{L_1} = S^0$) or a cactus graph with less than $n$ edges. Therefore, the result holds after applying the inductive hypothesis and the fact that $I_{G_1 \sqcup G_2} \sim_h I_{G_1} * I_{G_2}$.

Otherwise, if $G$ does not contain any vertex of degree 1, then it contains at least one triangle $T$ with two vertices $v_1$ and $v_2$ having degree 2 (see Figure \ref{cacprop2}). Let $v$ be the other vertex of $T$, possibly with degree greater than 2. As $v$ dominates $v_1$, Lemma \ref{dominationlemma} leads to $I_G \sim_h I_{G-v} \vee \Susp I_{G-st(v)}$. A similar reasoning as before shows that neither $I_{G-v}$ nor $I_{G-st(v)}$ is contractible. This completes the proof.
\end{proof}

\begin{remark}
Note that Proposition \ref{prop2} cannot be extended to the case of cactus graphs having cycles of order not divisible by 3. Figure \ref{notdiv} shows examples with pentagons and squares whose associated independence complexes are contractible.

\begin{figure}
\centering
\includegraphics[width = 10.cm]{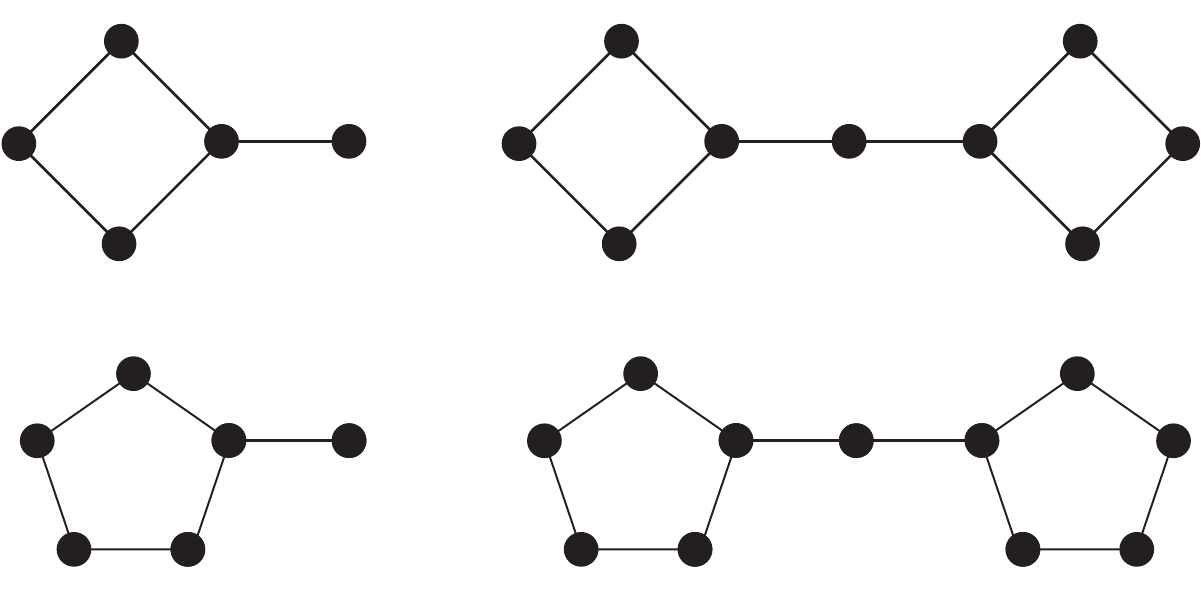}
\caption{\small{The independence complexes associated to these graphs are contractible.}}
\label{notdiv}
\end{figure}

\end{remark}

\subsection{Outerplanar graphs}\label{subouter}

\begin{definition} {\rm{\cite{outerplanar1}}} \label{outdef}
A simple connected graph $G$ is said to be outerplanar if it admits an embedding in the plane such that all vertices belong to the unbounded face of the embedding. A non-connected graph is outerplanar if all its connected components are so. An outerplane graph is a particular plane embedding of an outerplanar graph. Figure \ref{gluingedge} shows three outerplane graphs.

Outerplane graphs can be constructed from a single vertex by a finite number of the following operations: wedge with an interval $L_1$, wedge with a cycle graph and gluing a cycle graph along an edge of the unbounded region of the outerplane graph (the ``gluing along an outer edge'' operation, denoted by $|^1$, is illustrated in Figure \ref{gluingedge}).
\end{definition}

\begin{figure}
\centering
\includegraphics[width = 11.9cm]{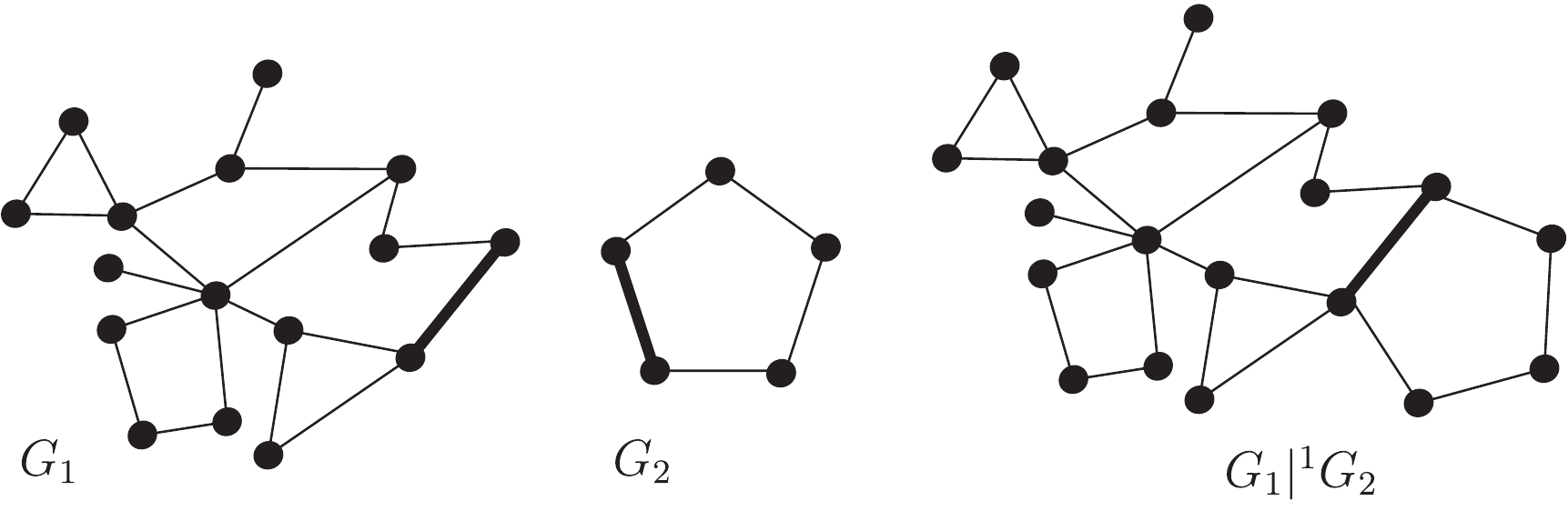}
\caption{\small{The outerplane graphs $G_1$, $G_2= C_5$ and $G_1 |^1 G_2$. The ``gluing edges'' have been thickened.}}
\label{gluingedge}
\end{figure}

Proposition \ref{cactusprop} has a natural generalization to the family of outerplanar graphs.

\begin{theorem} Let $G$ be an outerplanar graph. Then
\begin{enumerate}
\item [(1)] \cite{WP} $G$ is a circle graph.
\item[(2)] The independence complex of $G$, $I_G$, is homotopy equivalent to a wedge of spheres.
\end{enumerate}
\end{theorem}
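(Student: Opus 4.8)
Part (1) is \cite{WP}; the content is part (2). The plan is to run an induction on the number of vertices of $G$ closely modelled on the proof of Proposition \ref{cactusprop}(2). The structural fact used throughout is that the class of outerplanar graphs is closed under vertex deletion (it is minor-closed), so every smaller graph produced below is again outerplanar and the inductive hypothesis applies; I also use freely Remark \ref{remm}, Corollary \ref{addingleave}, Lemma \ref{dominationlemma}, Proposition \ref{Reiner}, Theorem \ref{csorba} and Proposition \ref{propwedge}.

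First I would dispose of the easy cases. If $G$ is empty or a single vertex, $I_G$ is $S^{-1}$ or contractible; if $G$ is disconnected, $I_G$ is the join of the independence complexes of its components (Remark \ref{remm}), and a join of wedges of spheres is a wedge of spheres by Proposition \ref{propwedge}. If $G$ has a vertex of degree one with preleaf $v$, Corollary \ref{addingleave} gives $I_G\sim_h\Susp I_{G-st_G(v)}$, and we finish by induction and Proposition \ref{propwedge}(7). If $G$ is a single cycle, apply Proposition \ref{propcycle}. So from now on $G$ is connected, has minimum degree at least two, and is not a cycle; then $G$ has a leaf block $B$ of its block--cut decomposition which, having no vertex of degree one, is $2$-connected with at least three vertices --- either $B=G$ (when $G$ is $2$-connected), or $B$ meets the rest of $G$ in a single cut vertex $c$. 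Being $2$-connected and outerplanar, in its outerplane embedding $B$ is a polygon together with a family of pairwise non-crossing chords.

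If $B$ has no chord, then $B$ is a cycle $C_m$ ($m\ge 3$) attached to the rest of $G$ at $c$, and this case is handled exactly as in the cactus proof: for $m\ge 6$, Theorem \ref{csorba} replaces $C_m$ by $C_{m-3}$; for $m=3$, $c$ dominates an adjacent degree-$2$ vertex, so Lemma \ref{dominationlemma}(2) applies; for $m=4$, the two degree-$2$ non-cut vertices of $B$ dominate each other and are non-adjacent, so Lemma \ref{dominationlemma}(1) applies; and for $m=5$, where no domination is available, one applies Proposition \ref{Reiner} to a degree-$2$ vertex $v$ and observes that $I_{G-v}$ is contractible (deleting $v$ turns two ear-vertices into leaves, and deleting the star of $c$ strands one of them as an isolated vertex), whence $I_G\sim_h\Susp I_{G-st_G(v)}$. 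If $B$ has a chord, the weak dual of $B$ is a tree with at least two leaves, so it has a leaf face $f$ whose interior ear-vertices avoid $c$ (distinct leaf faces have disjoint ears of degree-$2$ vertices, so $c$ lies in at most one); thus $f$ is an $m$-gon bounded by a single chord $xy$ and a path $x,z_1,\dots,z_{m-2},y$ with all $z_i$ of degree $2$ in $G$. For $m\ge 6$, Theorem \ref{csorba} again contracts four consecutive ear-edges, shrinking $f$ to an $(m-3)$-gon. For $m=3$, $z_1$ has the two adjacent vertices $x,y$ as its neighbours, so $x$ dominates $z_1$ and Lemma \ref{dominationlemma}(2) applies. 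For $m=4$, Proposition \ref{Reiner} applied to $x$ works: since $x$ is adjacent to $y$, deleting $st_G(x)$ removes $y$ and hence isolates $z_2$, so $I_{G-st_G(x)}$ is contractible and $I_G\sim_h I_{G-x}$. For $m=5$, Proposition \ref{Reiner} applied to the middle ear-vertex $z_2$ works: after deleting $z_2$, $z_1$ is a leaf at $x$, and (again using $x$ adjacent to $y$) deleting the star of $x$ isolates $z_3$, so $I_{G-z_2}$ is contractible and $I_G\sim_h\Susp I_{G-st_G(z_2)}$. In every instance we have reduced to a strictly smaller outerplanar graph, or to a suspension or a wedge involving suspensions of independence complexes of such graphs, and induction together with Proposition \ref{propwedge}(7) completes the argument.

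The routine part is the same as in the cactus case; the genuinely new point, and the place I expect the main difficulty, is the analysis of small faces ($m\in\{4,5\}$) of a chorded $2$-connected block, where no vertex dominates another and one must fall back on Proposition \ref{Reiner} and prove that the relevant link complex is contractible. What makes this go through is the observation that the two ends $x,y$ of the chord bounding a leaf face are \emph{adjacent}, so deleting the star of one of them also deletes the other and thereby strands a degree-$2$ ear-vertex as an isolated vertex (a cone point), forcing the link complex to be a cone. The only remaining care is bookkeeping: choosing the leaf face so that its ear-interior is disjoint from the cut vertex $c$ (possible because ears of distinct leaf faces are disjoint), and checking that the Csorba contractions never create a multi-edge (they do not, since they are carried out strictly inside a path of degree-$2$ vertices whose endpoints stay non-adjacent; this is exactly why the cases $m\le 5$ must be treated by hand).
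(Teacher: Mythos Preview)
Your argument is correct, and the overall strategy --- find a peripheral cycle or face, shrink it via Csorba's theorem, and dispose of the small residual cases by domination or Proposition~\ref{Reiner} --- is the same as the paper's. The organizational difference is that the paper works with the constructive characterization of outerplane graphs in Definition~\ref{outdef} (built from a vertex by wedging with $L_1$, wedging with a cycle, and gluing a cycle along an outer edge via the operation $|^1$) and isolates the new inductive step as Lemma~\ref{lemouter2}: if every induced subgraph of $G$ has independence complex a wedge of spheres, then so does $G|^1 C_n$. You instead locate the same structure via the block--cut tree and the weak dual of a $2$-connected outerplanar block. These are two descriptions of the same phenomenon: your ``leaf face bounded by a chord $xy$'' is exactly the last $C_n$ glued on in the paper's construction. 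The paper's packaging is slightly more modular (Lemma~\ref{lemouter2} is reusable), while your version is more explicitly graph-theoretic and makes the vertex-count induction self-contained without introducing the $|^1$ operation.

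One small correction to your commentary: the claim that ``no vertex dominates another'' in the $m=4$ chorded case is not right. With face $x,z_1,z_2,y$ and chord $xy$, one has $\lk_G(z_1)=\{x,z_2\}\subseteq\lk_G(y)$ and $z_1,y$ non-adjacent, so $y$ dominates $z_1$ and Lemma~\ref{dominationlemma}(1) gives $I_G\sim_h I_{G-y}$ directly --- this is exactly how the paper handles the case $n\equiv 1\pmod 3$ in Lemma~\ref{lemouter2}. Your Proposition~\ref{Reiner} argument for $m=4$ is still valid, just longer than needed. Similarly, the paper handles $m=5$ by allowing the Csorba contraction to collapse the length-$4$ ear onto the existing chord $xy$, producing a harmless multi-edge (irrelevant for $I_G$), whereas you avoid that and argue via Proposition~\ref{Reiner}; both routes are fine.
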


\begin{proof} We use the standard inductive characterization of outerplanar graphs given in the second part of Definition \ref{outdef}.

(1) It is proven in \cite{WP}. Since the paper \cite{WP} is not easily available, we sketch a short inductive proof. We prove a slightly stronger statement, namely that for every outerplane graph $G$ there exists an associated chord diagram with the property that the two chords associated to each of the edges of the unbounded region of the graph have two of their endpoints close enough (that is, not separated by the endpoints of other chords; see Figure \ref{figouter1}).

Starting from the base case of a single vertex and assuming that the statement holds for outerplane graphs containing $n$ edges, we proceed by induction. The inductive step ``adding a cycle graph along an edge of the unbounded region'' follows from Lemma \ref{lemouter1}. The case of wedge operation holds by following the proof of Proposition \ref{cactusprop}(1) in a similar manner.

(2) The proof is analogous to the one of Proposition \ref{cactusprop}(2). The only additional thing one should check is the fact that gluing a cycle graph to an outerplanar graph along an outer edge preserves the property of its independence complex being homotopy equivalent to a wedge of spheres; this follows from Lemma \ref{lemouter2}.
\end{proof}

\begin{lemma}\label{lemouter2}
Let $G$ be a graph such that the independence complex $I_{G'}$ of any induced subgraph $G' \in G$ is homotopy equivalent to a wedge of spheres. Then $I_{G |^1 C_n}$ is homotopy equivalent to a wedge of spheres.
\end{lemma}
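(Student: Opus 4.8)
The plan is to peel off the new vertices of the glued cycle in blocks of three via Csorba's theorem, reducing to three small base cases that are handled by Proposition \ref{Reiner} and the Domination lemma. Write the outer gluing edge of $G$ as $ab\in E(G)$, so that $G|^1C_n$ is $G$ together with a new path $a-u_1-u_2-\cdots-u_{n-2}-b$ (when $n=3$ this is a single new vertex $u_1$ adjacent to both $a$ and $b$). The feature I would exploit everywhere is that $a$ and $b$ are \emph{already} adjacent in $G$, so $b\in st_G(a)$ and $a\in st_G(b)$; consequently, deleting the closed star of $a$ or of $b$ inside $G|^1C_n$ wipes out several of the new vertices $u_i$ and leaves an induced subgraph of $G$, possibly together with a disjoint path. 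For the induction, note that for $n\ge 6$ the cycle $C_n$ is obtained from $C_{n-3}$ by subdividing one edge into four intervals; choosing that edge among the $u_iu_{i+1}$ shows that $G|^1C_n$ is obtained from $G|^1C_{n-3}$ by the same subdivision, so Theorem \ref{csorba} gives $I_{G|^1C_n}\sim_h\Sigma I_{G|^1C_{n-3}}$, and since the suspension of a wedge of spheres is a wedge of spheres (Proposition \ref{propwedge}(7)) it suffices to treat $n\in\{3,4,5\}$.

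For $n=5$ I would apply Proposition \ref{Reiner} at the central vertex $u_2$: there $(G|^1C_5)-st(u_2)=G$, whereas $(G|^1C_5)-u_2$ is $G$ with two pendant vertices $u_1,u_3$ hanging from $a$ and $b$, and Corollary \ref{addingleave} applied to the leaf $u_1$ (using $b\in st_G(a)$) identifies $I_{(G|^1C_5)-u_2}$ with the suspension of a cone, which is contractible; hence $I_{G|^1C_5}\sim_h\Sigma I_G$. For $n=4$ I would apply Proposition \ref{Reiner} at $b$: now $(G|^1C_4)-st(b)=(G-st_G(b))\sqcup\{u_1\}$ has a conical, hence contractible, independence complex, so $I_{G|^1C_4}\sim_h I_{(G|^1C_4)-b}$, and the latter is $G-b$ with the pendant path $a-u_1-u_2$ attached, so Corollary \ref{addingleave} (leaf $u_2$) yields $I_{G|^1C_4}\sim_h\Sigma I_{G-a-b}$. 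For $n=3$ I would invoke Lemma \ref{dominationlemma}(2): $a$ dominates $u_1$ and is adjacent to it, so $I_{G|^1C_3}\sim_h I_{(G|^1C_3)-a}\vee\Sigma I_{(G|^1C_3)-st(a)}$ with $(G|^1C_3)-st(a)=G-st_G(a)$, while $(G|^1C_3)-a$ is $G-a$ with a pendant $u_1$ at $b$, so Corollary \ref{addingleave} gives $I_{(G|^1C_3)-a}\sim_h\Sigma I_{G-a-st_G(b)}$. In every base case the outcome is assembled by $\Sigma$ and $\vee$ from independence complexes of induced subgraphs of $G$, each a wedge of spheres by hypothesis, and Proposition \ref{propwedge}(7) finishes the argument.

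The main obstacle is the absence of a single vertex that works for all $n$ at once: deleting $st(b)$ in $G|^1C_n$ leaves a disjoint path $L_{n-4}$, whose independence complex is contractible only when $n\equiv 1\pmod 3$ (Corollary \ref{path}), so Proposition \ref{Reiner} does not split $I_{G|^1C_n}$ cleanly in general, which is what forces the three-vertices-at-a-time reduction together with a separate choice of vertex in each residue class. A routine but necessary bookkeeping point is to check, in each base case, that every vertex removed besides the $u_i$ lies in $st_G(a)$ or $st_G(b)$, so that the residual graphs really are induced subgraphs of $G$ and the hypothesis applies.
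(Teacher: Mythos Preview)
Your proof is correct and follows essentially the same strategy as the paper: reduce $n$ modulo~$3$ via repeated applications of Theorem~\ref{csorba}, then handle the three residual cases $n\in\{3,4,5\}$ by domination-type arguments that express $I_{G|^1C_n}$ in terms of suspensions and wedges of $I_{G'}$ for induced subgraphs $G'\subseteq G$. The only cosmetic differences are in the base cases: for $n=4$ you invoke Proposition~\ref{Reiner} at $b$ directly (observing that $u_1$ becomes isolated in $(G|^1C_4)-st(b)$), which is precisely the content of the proof of Lemma~\ref{dominationlemma}(1) that the paper cites instead; and for $n=5$ you give an explicit argument via Proposition~\ref{Reiner} at $u_2$ showing $I_{G|^1C_5}\sim_h\Sigma I_G$, whereas the paper reaches the same conclusion by treating the $n\equiv 2\pmod 3$ reduction as collapsing the glued cycle entirely back to the edge $ab$.
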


\begin{proof}
After applying a finite number of times Theorem \ref{csorba} it suffices to consider the graphs depicted in Figure \ref{figouter2}.

If $n = 3k+2$, then $I_{G |^1 C_n} \sim_h \Susp^k I_G$ and the result holds.

If $n = 3k$, then $I_{G |^1 C_n} \sim_h \Susp^{k-1} I_H$, with $H = G |^1 C_3$. As $v$ dominates $w$, by Lemma \ref{dominationlemma} $I_H \sim_h I_{H-v} \vee I_{H-st_H(v)}$. The proof follows from the hypothesis in the statement and Corollary \ref{addingleave}.

If $n = 3k+1$, then $I_{G |^1 C_n} \sim_h \Susp^{k-1} I_H$, with $H = G |^1 C_4$. Lemma \ref{dominationlemma} implies that $I_H \sim_h I_{H-v}$. Again the hypothesis in the statement together with Corollary \ref{addingleave} completes the proof.

\begin{figure}
\centering
\includegraphics[width = 10cm]{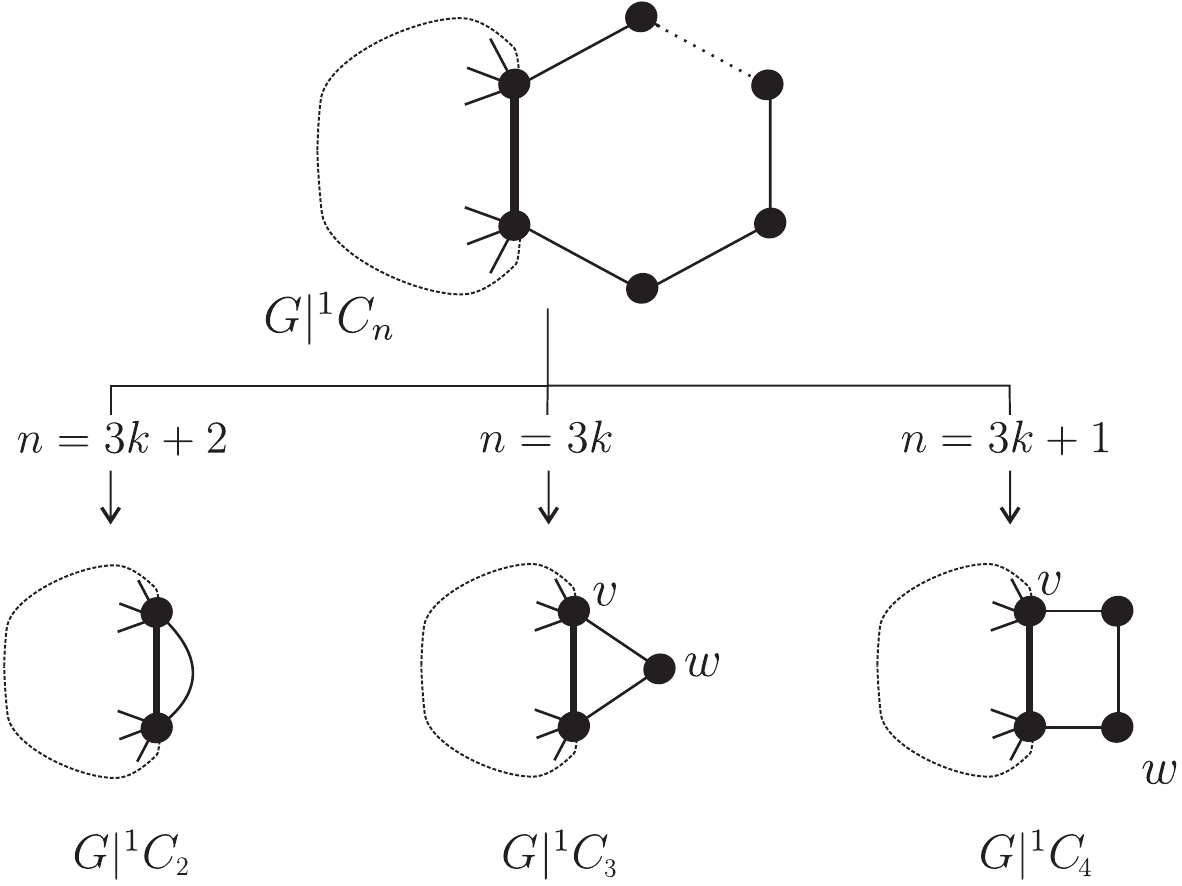}
\caption{\small{The graphs illustrating the proof of Lemma \ref{lemouter2}.}}
\label{figouter2}
\end{figure}

\end{proof}

\begin{lemma}\label{lemouter1}
Let $G$ be an outerplane graph with an associated chord diagram $\mathcal{C}_G$ such that every edge of the unbounded region of $G$ has associated two chords having two close endpoints (that is, with no chords between them). Then the graph $G |^1 C_n$ is an outerplane graph whose associated chord diagram $\mathcal{C}_{G|^1C_n}$ given in Figure \ref{figouter1} has the same property.
\end{lemma}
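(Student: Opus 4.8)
The plan is entirely constructive: I would take the chord diagram $\mathcal{C}_G$ and splice in the $n-2$ chords corresponding to the new vertices of $C_n$ exactly as pictured in Figure~\ref{figouter1}, and then verify two things — that the circle graph of the resulting diagram is $G\,|^1\,C_n$, and that this diagram again has the close-endpoints property along every outer edge. The fact that $G\,|^1\,C_n$ is outerplane is immediate from the definition of the operation $|^1$ (Definition~\ref{outdef}), so the real content is the statement about the chord diagram.

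Write $e=ab$ for the outer edge of $G$ along which $C_n$ is glued, so that $C_n$ has vertex set $\{a,b,c_1,\dots,c_{n-2}\}$ and edges $ab$, $ac_1$, $c_ic_{i+1}$ for $1\le i\le n-3$, and $c_{n-2}b$. By hypothesis the chords $a$ and $b$ cross in $\mathcal{C}_G$ and have endpoints $a^-$, $b^-$ that are consecutive on the circle, with no other chord endpoint on the open arc $I$ between them. First I would confine the whole construction to $I$: place the $2(n-2)$ endpoints of the chords $c_1,\dots,c_{n-2}$ inside $I$ in the staircase configuration of Figure~\ref{figouter1}, arranged so that along the sequence $a=c_0,c_1,\dots,c_{n-2},c_{n-1}=b$ consecutive chords cross, while every pair $c_ic_j$ with $|i-j|\ge 2$ (other than the inherited crossing $a\times b$) is disjoint. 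Because $I$ contains no endpoint of any chord of $G$ except $a^-$ and $b^-$, none of the new chords meets any old chord; hence the crossings of $\mathcal{C}_{G|^1C_n}$ are exactly those of $\mathcal{C}_G$ together with $ac_1,c_1c_2,\dots,c_{n-3}c_{n-2},c_{n-2}b$, which is precisely the edge set of $G\,|^1\,C_n$.

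It then remains to check the close-endpoints property for $\mathcal{C}_{G|^1C_n}$. Gluing $C_n$ along $ab$ only enlarges the unbounded region, replacing the boundary edge $ab$ by the path $ac_1\cdots c_{n-2}b$; thus the outer edges of $G\,|^1\,C_n$ are those of $G$ other than $ab$, together with $ac_1,c_1c_2,\dots,c_{n-2}b$. For an outer edge of $G$ different from $ab$ the arc witnessing the property in $\mathcal{C}_G$ lies outside $I$, hence is untouched. For each new outer edge $c_ic_{i+1}$ (and for $ac_1$ and $c_{n-2}b$) the staircase was laid out so that two of the relevant endpoints are consecutive on the circle with empty arc between them, which is exactly the configuration displayed in Figure~\ref{figouter1}. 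I expect the only genuinely delicate step to be the middle one: drawing the staircase explicitly enough that one can simultaneously read off the four facts (i) consecutive chords cross, (ii) non-consecutive new chords are disjoint, (iii) $c_1$ crosses $a$ and $c_{n-2}$ crosses $b$ while every intermediate $c_i$ avoids both, and (iv) every consecutive pair along $a,c_1,\dots,c_{n-2},b$ leaves an empty arc — but all four are visible in the figure, so the argument reduces to transcribing the picture into these combinatorial assertions.
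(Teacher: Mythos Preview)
Your plan is exactly what the paper does: its entire proof is the sentence ``Figure~\ref{figouter1} illustrates the proof,'' so unpacking that picture into the verification of crossings and of the close-endpoints property along the new outer boundary is the intended argument, and your breakdown of which outer edges survive and which are created is correct.

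There is, however, a concrete slip in your description of the staircase. You say you will place all $2(n-2)$ new endpoints \emph{inside the open arc $I$} between $a^{-}$ and $b^{-}$. With that placement $c_{1}$ cannot cross $a$: both endpoints of $c_{1}$ lie strictly between $a^{-}$ and $b^{-}$, so going around the circle one sees $a^{-},c_{1},c_{1},a^{+}$ in that cyclic order, which is $a,c_{1},c_{1},a$, not alternating. The same obstruction prevents $c_{n-2}$ from crossing $b$. What the figure actually does (and what makes your items (iii) and (iv) true) is to let the staircase spill one step past each of $a^{-}$ and $b^{-}$: one endpoint of $c_{1}$ is inserted immediately on the far side of $a^{-}$ and one endpoint of $c_{n-2}$ immediately on the far side of $b^{-}$, while the remaining $2n-6$ new endpoints go into $I$. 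Equivalently, you replace the \emph{closed} arc $[a^{-},b^{-}]$ by the pattern
\[
c_{1},\,a^{-},\,c_{2},\,c_{1},\,c_{3},\,c_{2},\,\dots,\,c_{n-2},\,c_{n-3},\,b^{-},\,c_{n-2},
\]
so that $a^{-}$ and $b^{-}$ become interior points of the inserted block. This leaves every old crossing unchanged (the cyclic order of $a^{-},a^{+},b^{-},b^{+}$ relative to all other old endpoints is preserved) and now $c_{1}$ genuinely separates $a^{-}$ from $a^{+}$; the close-endpoint witnesses for $ac_{1}$ and $c_{n-2}b$ are the pairs $(c_{1},a^{-})$ and $(b^{-},c_{n-2})$ at positions $1,2$ and $2n-3,2n-2$ of the block. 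With this correction your argument goes through exactly as you outlined.
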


\begin{proof}
Figure \ref{figouter1} illustrates the proof.

\begin{figure}
\centering
\includegraphics[width = 11.5cm]{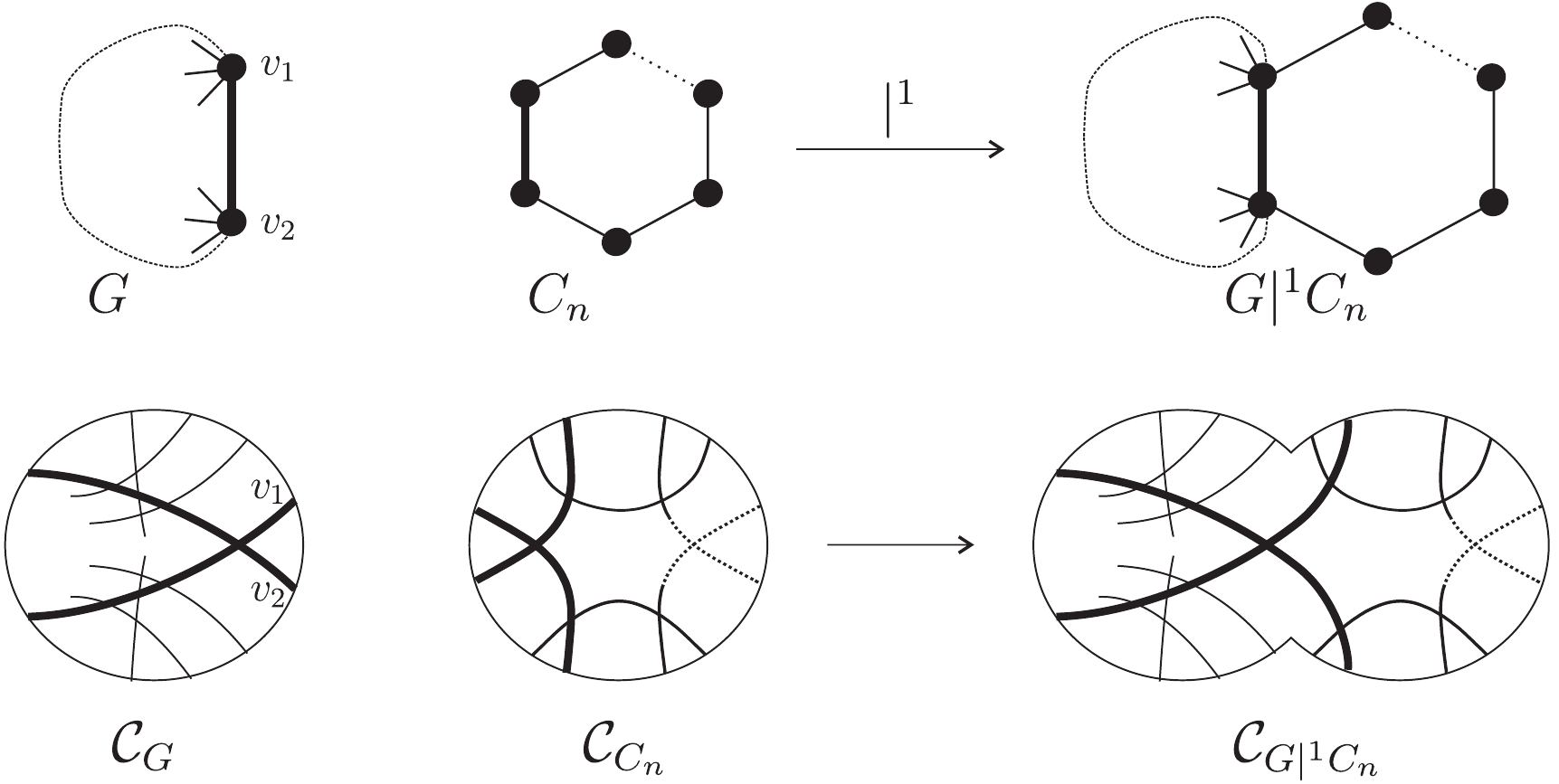}
\caption{\small{The chord diagrams $\mathcal{C}_G$, $\mathcal{C}_{C_n}$ and $\mathcal{C}_{G |^1 C_n}$ illustrating the proof of Lemma \ref{lemouter1} are shown.}}
\label{figouter1}
\end{figure}
\end{proof}

\section{Permutation graphs}\label{secpermut}

An interesting family of circle graphs are permutation graphs. We discuss them in this section.

\begin{definition}
A chord diagram $\mathcal{C}$ is said to be a permutation chord diagram if the boundary of $\mathcal{C}$ can be divided into two arcs $a_1$ and $a_2$ in such a way that each chord of $\mathcal{C}$ connects a point in $a_1$ with another one in $a_2$. The circle graph associated to a permutation chord diagram is called a \emph{permutation graph}. See Figure \ref{permuwed} for some examples.
\end{definition}

Permutation chord diagrams on $n$ chords are in one-to-one correspondence with the permutation group of $n$ elements $S_n$.

As suggested by Micha{\l} Adamaszek, in the following result we prove that Conjecture \ref{conj} holds for permutation graphs.

\begin{theorem}
Let $G$ be a permutation graph. Then, its independence simplicial complex $I_G$ is homotopy equivalent to a wedge of spheres.
\end{theorem}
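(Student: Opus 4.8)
The plan is to induct on the number of vertices, using the cone construction (Proposition~\ref{Reiner}) together with the Domination lemma, and to show that at least one of the reductions available always lands us back inside the class of permutation graphs (or a disjoint union of permutation graphs). First I would fix a permutation chord diagram $\mathcal{C}$ realizing $G$, with the two arcs $a_1$ and $a_2$ carrying the endpoints of the chords. The crucial structural observation is that an induced subgraph of a permutation graph is again a permutation graph: deleting a chord from $\mathcal{C}$ leaves a permutation chord diagram. Hence $G-v$ and $G-\mathrm{st}_G(v)$ are permutation graphs (or, if disconnected, disjoint unions of permutation graphs), and by Remark~\ref{remm} and Proposition~\ref{propwedge}(1) a join of wedges of spheres is again a wedge of spheres. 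So the induction will go through provided that in $I_G \sim_h I_{G-v} \cup (v * I_{G-\mathrm{st}_G(v)})$ the gluing term $I_{G-\mathrm{st}_G(v)}$ is contractible inside $I_{G-v}$; the Domination lemma gives exactly this whenever $G$ has a dominated vertex adjacent to its dominator.

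The heart of the argument is therefore: \emph{every permutation graph with at least two vertices either has an isolated vertex, or a vertex of degree one, or a pair $v,w$ of adjacent vertices with $v$ dominating $w$.} To see this I would look at the chord $v$ whose, say, $a_1$-endpoint is the very first one encountered along $a_1$ (equivalently, the leftmost chord). Among all chords crossing $v$, pick the one, call it $w$, whose $a_2$-endpoint is closest to the $a_2$-endpoint of $v$ on the appropriate side; a short case analysis on the cyclic positions of the four endpoints of $v$ and $w$ shows that every chord meeting $w$ must also meet $v$, i.e. $\mathrm{lk}_G(w)\subseteq \mathrm{lk}_G(v)\cup\{v\}$, which is the domination condition. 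If $v$ crosses no chord at all, it is isolated and $I_G$ is a cone, hence contractible; if it crosses exactly one chord we are in the leaf case handled by Corollary~\ref{addingleave}. In the remaining case $v$ and $w$ are adjacent with $v$ dominating $w$, and Lemma~\ref{dominationlemma}(2) gives $I_G \sim_h I_{G-v}\vee \Sigma I_{G-\mathrm{st}_G(v)}$, both summands being handled by the inductive hypothesis.

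I expect the main obstacle to be the combinatorial verification that the "leftmost chord" $v$ really admits a dominated neighbour $w$ — one has to be careful because a chord diagram on $a_1\sqcup a_2$ has a cyclic, not linear, structure, so "leftmost" must be defined with respect to a chosen basepoint on the circle lying in, say, the arc complementary to where $a_1$ and $a_2$ meet, and the candidate $w$ must be chosen by the right extremality condition (closest $a_2$-endpoint, measured toward the basepoint) so that no third chord can sneak between them while still avoiding $v$. Once the right extremal choices are pinned down this is an elementary planarity-of-a-permutation-diagram argument, essentially the observation that two chords in a permutation diagram cross iff their endpoints interleave in a specific order. The rest of the proof — closure of the class under vertex deletion, the base cases ($I_\emptyset = S^{-1}$, a single vertex contractible), and assembling the wedge-of-spheres conclusion from Proposition~\ref{propwedge}(1),(7) and Remark~\ref{remm} — is then routine.
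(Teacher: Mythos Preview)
Your overall strategy --- induct on the number of vertices, use closure of permutation graphs under vertex deletion, and reduce via the Domination lemma --- is exactly the paper's. The gap is in the combinatorial core you flagged yourself: the claim that the leftmost chord $v$ dominates the neighbour $w$ whose $a_2$-endpoint is nearest to $v$'s is false. Take the permutation chord diagram for $\pi=(3,5,2,4,1)$, i.e.\ chords $s_i$ with $a_1$-endpoint $i$ and $a_2$-endpoint $\pi(i)$. Here $v=s_1$ has exactly the two neighbours $s_3$ and $s_5$ (so you are not in the isolated or leaf case), and your rule selects $w=s_3$. But $s_2$ crosses $s_3$ while it does not cross $s_1$, so $s_1$ fails to dominate $s_3$; in fact $s_1$ dominates \emph{neither} of its neighbours in this example. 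Thus no ``short case analysis on the four endpoints of $v$ and $w$'' can rescue the argument as stated.

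The paper's repair is small but decisive: do not insist that the second chord be a neighbour of $s_1$. Instead take $s_2$ to be the chord with the \emph{second} leftmost $a_1$-endpoint, regardless of whether it meets $s_1$. If $s_1$ and $s_2$ cross, then every $s_k$ ($k>2$) crossing $s_2$ has $\pi(k)<\pi(2)<\pi(1)$ and hence also crosses $s_1$, so $v_1$ dominates $v_2$ and Lemma~\ref{dominationlemma}(2) gives $I_G\sim_h I_{G-v_1}\vee\Sigma I_{G-st(v_1)}$. If $s_1$ and $s_2$ do not cross, then every $s_k$ crossing $s_1$ has $\pi(k)<\pi(1)<\pi(2)$ and so crosses $s_2$, whence $v_2$ dominates $v_1$; since they are non-adjacent, Lemma~\ref{dominationlemma}(1) gives $I_G\sim_h I_{G-v_2}$. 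In particular the argument needs \emph{both} parts of the Domination lemma, not only the adjacent case you planned to use. With this adjustment your outline goes through verbatim.
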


\begin{proof}
We proceed by induction on the number of chords in the associated chord diagram $\mathcal{C}$ (that is, on the number of vertices in $G$, as chords and vertices are in one-to-one correspondence). The base cases when either there are no chords or there is just one hold, as $I_{\emptyset} = S^{-1}$ and $I_{\{v\}}$ is contractible. Suppose that the statement holds for chord diagrams with at most $n-1$ chords, $n\geq2$. Let $G$ be the permutation graph arising from a permutation chord diagram with $n$ chords. If $G$ is not connected, then $G = G_1 \sqcup G_2$ and $I_G = I_{G_1} * I_{G_2}$, hence by the inductive hypothesis $I_G$ is homotopy equivalent to a wedge of spheres. Suppose now that $G$ is connected. Starting from the leftmost upper side of the circle, number the chords as $\{s_1, s_2 \ldots, s_n\}$ as shown in Figure \ref{permuorder}. Let $v_i$ be the vertex in $G$ associated to the strand $s_i$, $1 \leq i \leq n$. Then, according to the relative positions between $s_1$ and $s_2$ there are two cases (again, see Figure \ref{permuorder}):

\begin{figure}
\centering
\includegraphics[width = 9cm]{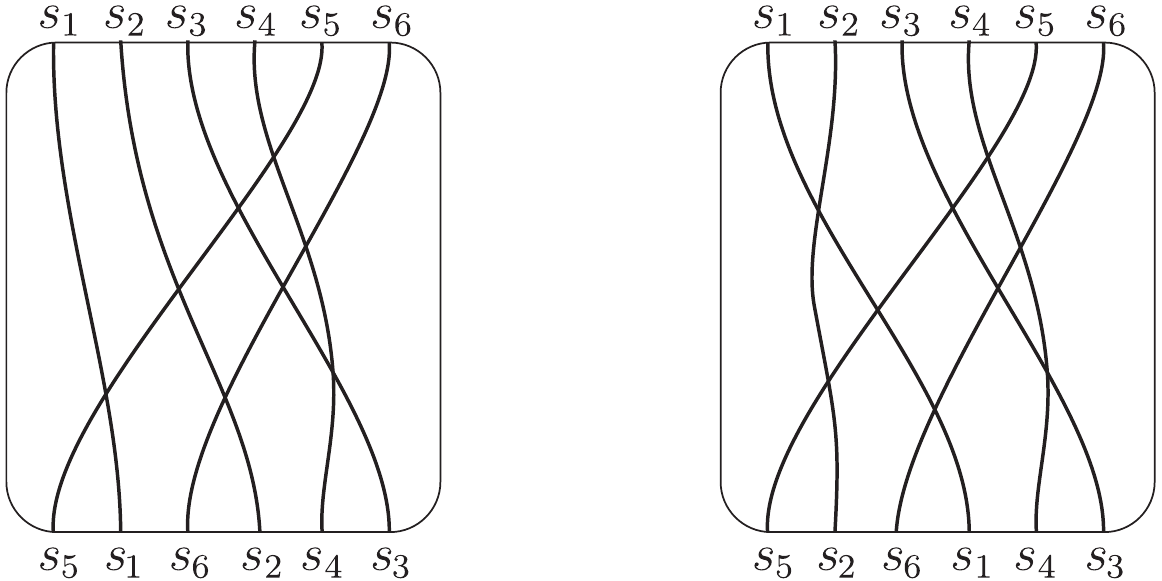}
\caption{\small{The two possible cases of domination according to the relative position of strands $s_1$ and $s_2$.}}
\label{permuorder}
\end{figure}

If $s_1$ and $s_2$ do not intersect, then $v_2$ dominates $v_1$. Hence, by Lemma~\ref{dominationlemma}(1) $I_G \sim_h I_{G-v_2}$. Otherwise $v_1$ dominates $v_2$, and again by Lemma~\ref{dominationlemma}(2) $I_G \sim_h I_{G-v_1} \vee \Susp I_{G-st(v_1)}$. Applying the inductive hypothesis completes the proof in both cases.
\end{proof}

The following result is based on the converse idea, as it shows that any wedge of spheres $\mathcal{X}$ can be realized as the independence complex of a permutation graph, assuming that if $\mathcal{X}$ has $m$ components, then $m-1$ of them are isolated points:

\begin{proposition}\label{anywedge}
For any wedge of spheres $\mathcal{X} = S^{k_1} \vee \ldots \vee S^{k_n}$ there exists a permutation graph $G$ whose independence complex $I_G$ is homotopy equivalent to $\mathcal{X}$.
\end{proposition}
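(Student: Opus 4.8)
The plan is to construct the permutation graph explicitly, building it from trivial pieces by two elementary operations whose effects on independence complexes are, respectively, ``suspend'' and ``wedge on an $S^{0}$''. Encode the target $\mathcal{X}=S^{k_{1}}\vee\cdots\vee S^{k_{n}}$ by the polynomial $p_{\mathcal{X}}(t)=\sum_{i=1}^{n}t^{k_{i}+1}\in\mathbb{Z}_{\ge 0}[t]$, so that the coefficient of $t^{j}$ records how many $(j-1)$-spheres occur; note $p_{\mathcal{X}}(0)=0$. For $p\in\mathbb{Z}_{\ge 0}[t]$ with $p(0)=0$ write $W(p)$ for the wedge of spheres it codes (one copy of $S^{j-1}$ for each unit in the $t^{j}$-coefficient, so $W(0)$ is a point). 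I would prove, by induction on $\deg p$, that every such $p$ is realized: there is a permutation graph $G$ with $I_{G}\sim_{h}W(p)$. Specializing to $p=p_{\mathcal{X}}$ then yields the proposition.

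For the inductive step, factor $p(t)=t\,q(t)+a_{1}t$, where $a_{1}$ is the coefficient of $t$ in $p$ and $q(t)=\bigl(p(t)-a_{1}t\bigr)/t$, so that $q(0)=0$ and $\deg q=\deg p-1$. By the inductive hypothesis choose a permutation graph $H$ with $I_{H}\sim_{h}W(q)$, and let $G$ be obtained from $H$ by first taking the disjoint union with one extra edge $L_{1}$, and then adjoining $a_{1}$ further mutually adjacent vertices, each joined to every vertex of $H\sqcup L_{1}$ (this last step is vacuous when $a_{1}=0$). By Remark~\ref{remm} and Corollary~\ref{path}, $I_{H\sqcup L_{1}}=I_{H}*I_{L_{1}}=I_{H}*S^{0}=\Sigma I_{H}\sim_{h}\Sigma W(q)\sim_{h}W(t\,q)$, the last step being Proposition~\ref{propwedge}(7). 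Each adjoined universal vertex $u$ has $\mathrm{st}_{G}(u)$ equal to the whole vertex set, so it contributes only the extra facet $\{u\}$; adjoining $a_{1}$ of them therefore replaces $I_{H\sqcup L_{1}}$ by $I_{H\sqcup L_{1}}\sqcup\{a_{1}\text{ points}\}\sim_{h}W(t\,q)\vee\bigvee_{a_{1}}S^{0}=W(t\,q+a_{1}t)=W(p)$. The base cases $\deg p\le 1$, i.e.\ $p(t)=a_{1}t$, are settled by $G=K_{a_{1}+1}$, whose independence complex is $a_{1}+1$ isolated points, homotopy equivalent to $\bigvee_{a_{1}}S^{0}=W(a_{1}t)$ (a point when $a_{1}=0$).

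The one point needing the specific geometry is that this $G$ is again a permutation graph. Recall a permutation graph is the crossing graph of chords of a disk whose feet lie on two prescribed boundary arcs, equivalently the ``inversion graph'' of one linear order on a top arc read against another on a bottom arc. The disjoint union of two permutation diagrams is a permutation diagram (place them in disjoint windows of both arcs, creating no new crossings), a single edge $L_{1}$ and a complete graph $K_{m}$ are permutation graphs, and adjoining a universal vertex preserves the class: put the new chord first in the top order and last in the bottom order, so it is inverted with --- hence crosses --- every other chord. Iterating the inductive step therefore keeps us inside the class of permutation graphs, and the resulting $G$ satisfies $I_{G}\sim_{h}W(p_{\mathcal{X}})=\mathcal{X}$. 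I expect the only mild obstacle to be the bookkeeping along this chain of operations; alternatively one may sidestep the homotopy-level bookkeeping altogether, noting that the computation above already pins down $\widetilde H_{*}(I_{G})$ and then invoking the theorem proved just before this proposition --- that the independence complex of a permutation graph is a wedge of spheres --- to conclude that the (thereby forced) homotopy type of $I_{G}$ is that of $\mathcal{X}$.
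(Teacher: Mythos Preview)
Your proof is correct and follows essentially the same approach as the paper's. Both constructions use the same two elementary moves on permutation chord diagrams---adding a chord that crosses all existing chords (your ``universal vertex'', the paper's Move~I) to wedge on an $S^{0}$, and adding a disjoint pair of crossing chords (your $\sqcup\,L_{1}$, the paper's Move~II) to suspend---and both amount to rewriting $\mathcal{X}$ as an iterated suspension of wedges of copies of $S^{0}$; your polynomial encoding and induction on $\deg p$ is simply a tidy formalization of the recursion that the paper describes more informally as building the diagram ``from the innermost level to the outermost one''.
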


\begin{proof}
We present a constructive proof by showing an algorithm for constructing the permutation chord diagram $\mathcal{C}_G$ leading to the graph $G$ in the statement. We will use two different ``moves'':

Move I: adding a chord $s_0$ (from upper-leftmost to bottom-rightmost sides) crossing all the previous chords in $\mathcal{C}_G$. The effect of $s_0$ in $G$ is adding a new vertex $v_0$ connected with all the previous vertices; hence, $v_0$ is isolated in $I_G$. 

Move II: adding (at the rightmost part of $\mathcal{C}_G$) two chords crossing each other and not intersecting the other chords. This implies adding two vertices joined by an edge in $G$, which is equivalent to taking suspension in its independence complex.

Example \ref{expermwed} illustrates the method for constructing the permutation chord diagram. Order the wedges of spheres in $\mathcal{X}$ so their dimensions decrease, and rewrite $\mathcal{X}$ in terms of suspensions and wedges of $S^0$ by using the fact that $S^m \vee S^n \sim_h \Susp(S^{m-1}\vee S^{n-1}).$

The permutation chord diagram $\mathcal{C}_G$ is constructed from the innermost level to the outermost one (the nesting-level depends on the number of suspensions acting over it). In the innermost level one finds either $S^0$ or wedges of $S^0$ (corresponding to the original spheres with the highest dimension); for each wedge of $S^0$, perform move I. Once a ``nesting level'' is completed, consider its associated suspension (move II) and move to the previous level. As $k_i$ are finite, this process finishes after repeating this procedure a finite number of times.
\end{proof}

\begin{example}\label{expermwed}
We show how to find a permutation chord diagram $\mathcal{C}_G$ whose associated permutation graph $G$ has an independence complex $I_G$ homotopy equivalent to $S^3 \vee S^2 \vee S^1$. \\

Figure \ref{permuwed} illustrates the following steps: \\
\begin{figure}
\centering
\includegraphics[width = 11cm]{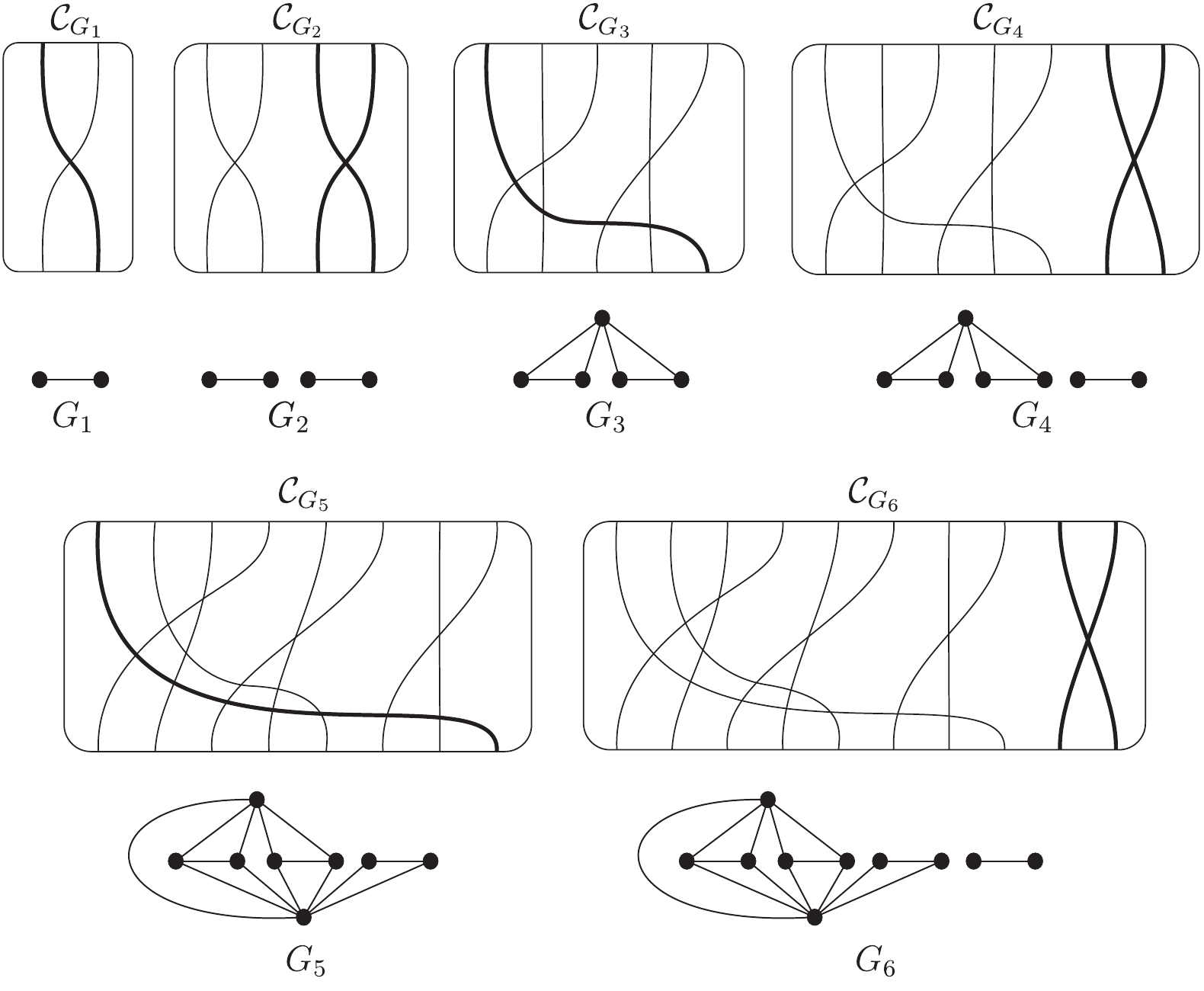}
\caption{\small{These chord diagrams illustrate the six steps in Example \ref{expermwed}. The new chords in each step have been thickened.}}
\label{permuwed}
\end{figure}

\noindent 0.- Decompose $S^3 \vee S^2 \vee S^1$ as $\Susp (\Susp (\Susp (S^0) \vee S^0) \vee S^0)$. \\
1.- Move I: $I_{G_1} \sim_h S^0$. \\
2.- Move II: $I_{G_2} \sim_h \Susp (S^0) \sim_h S^1$. \\
3.- Move I: $I_{G_3} \sim_h \Susp (S^0) \vee S^0 \sim_h S^1 \vee S^0$. \\
4.- Move II: $I_{G_4} \sim_h \Susp(\Susp (S^0) \vee S^0 \sim_h S^2 \vee S^1$. \\
5.- Move I: $I_{G_5} \sim_h \Susp(\Susp (S^0) \vee S^0) \vee S^0 \sim_h S^2 \vee S^1 \vee S^0$. \\
6.- Move II $I_{G_6} \sim_h \Susp(\Susp(\Susp (S^0) \vee S^0) \vee S^0) \sim_h S^3 \vee S^2 \vee S^1$. \\
\end{example}

Permutation graphs are usually not bipartite. However in some interesting cases we can change them into bipartite circle (Lando) graphs, as explained in Remark \ref{remcsorba}. In Examples \ref{gap1} and \ref{gap2} we construct Lando graphs with $I_G$ homotopy equivalent to $S^{n+k}\vee S^{2n-1+k}$, for any $n \geq 1$ and $k \geq 0$, and  $S^{2m+2n+k} \vee S^{m+2n+1+k} \vee S^{m+n+1+k}$, for non-negative integers $m,n,k$.

\begin{remark}\label{remcsorba}
Note that by replacing some edges by paths of length four any graph can be turned into a bipartite graph. Sometimes the property of a graph being a circle graph is preserved by this transformation. In Figure \ref{efectocsorba} we show this transformation at the level of both a graph and its associated chord diagram, in the particular case when two chords intersect close enough to the boundary of the disc (meaning that there are not other chords between two of their endpoints). This construction is used several times throughout the paper.
\end{remark}

\begin{figure}
\centering
\includegraphics[width = 8.5cm]{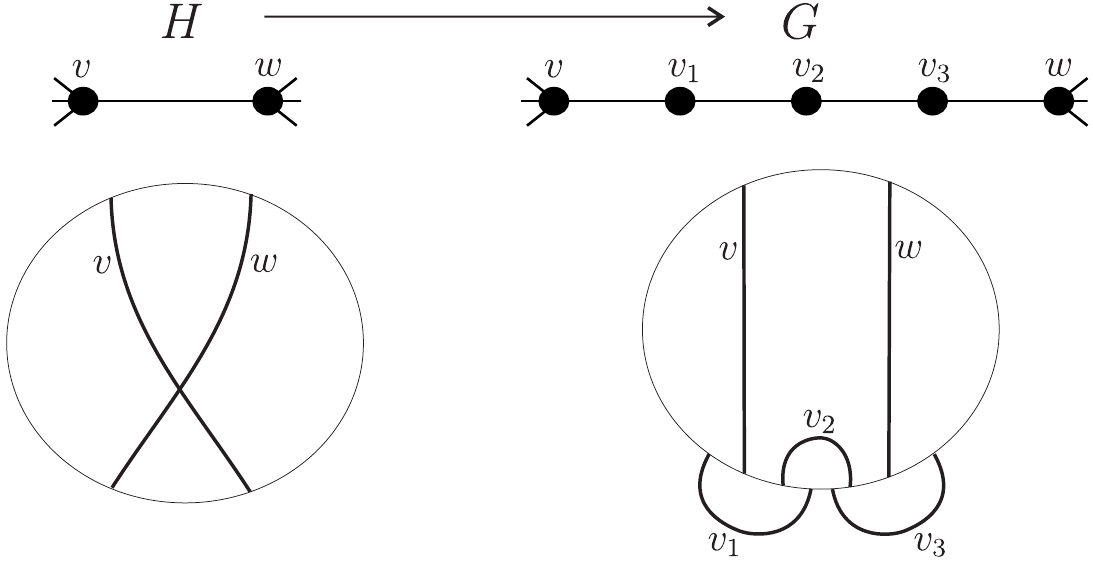}
\caption{\small{The effect of replacing an edge by a path of length four in a circle graph and its effect at the level of chord diagrams, under the conditions described in Remark \ref{remcsorba}.}}
\label{efectocsorba}
\end{figure}

\begin{example} \label{gap1}
Figure \ref{figgap1} shows a family of chord diagrams $\mathcal{C}_n^k$, with $n>1$ and $k\geq0$, whose associated bipartite circle graphs $G_n^k$ satisfies $$I_{G_n^k} \sim_h S^{n+k}\vee S^{2n-1+k}.$$

Starting from $\mathcal{C}_n$, whose associated Lando graph $G_n$ consists of the wedge of $n$ triangles (with $I_{G_n} \sim_h S^0 \vee S^{n-1}$), $\mathcal{C}_n^k$ is obtained after performing $n$ times the transformation in Theorem \ref{csorba} and illustrated in Figure \ref{efectocsorba}, and adding $k$ pairs of chords disposed as in Figure \ref{figgap1}, leading to $n+k$ suspensions of $I_{G_n}$ at the level of independence complexes.

\begin{figure}
\centering
\includegraphics[width = 10.5cm]{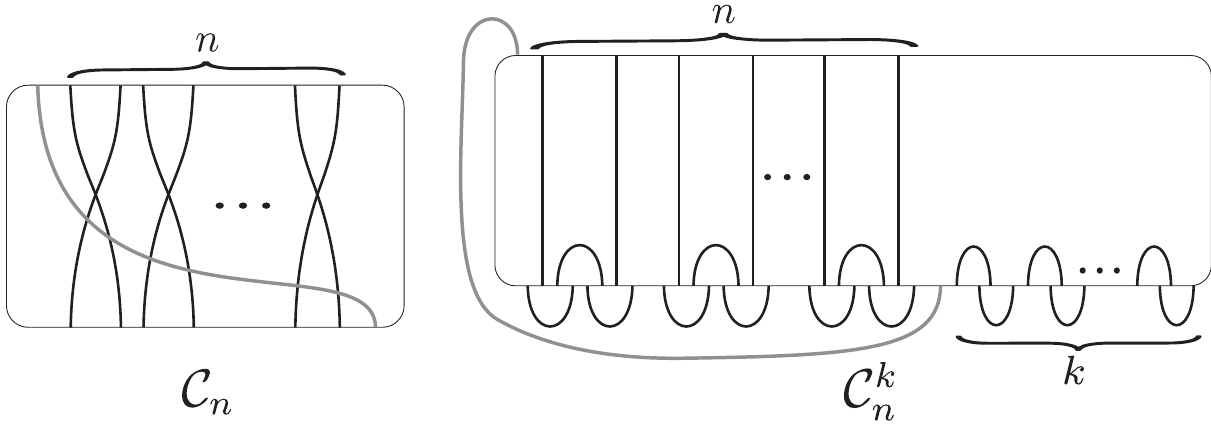}
\caption{\small{The chord diagrams illustrating Example \ref{gap1}.}}
\label{figgap1}
\end{figure}
\end{example}

\begin{example} \label{gap2}
For given non-negative integers $m,n,k$, define $G_{m,n}^k$ as the bipartite circle graph associated to the chord diagram $\mathcal{C}_{m,n}^k$ depicted in Figure \ref{figgap2}. Then
$$I_{G_{m,n}^k} \sim_h S^{2m+2n+k} \vee S^{m+2n+k+1} \vee S^{m+n+k+1}.$$

The construction is similar to that in Example \ref{gap1}. We start from $\mathcal{C}_{m,n}$, the chord diagram shown in Figure \ref{gap2}, whose associated circle graph $G_{m,n}$ consists of a vertex connected with all the vertices contained on the disjoint union of a wedge of $m$ triangles and $n$ intervals. Then we apply the transformation in Figure \ref{efectocsorba} to the $m+n+1$ pairs of chords in the diagram. As $I_{G_{m,n}} \sim_h S^0 \vee S^n \vee S^{m+n-1}$, the result holds after applying Theorem \ref{csorba} and adding $k$ pair of chords as those shown in Figure \ref{figgap2}.

\begin{figure}
\centering
\includegraphics[width = 10.5cm]{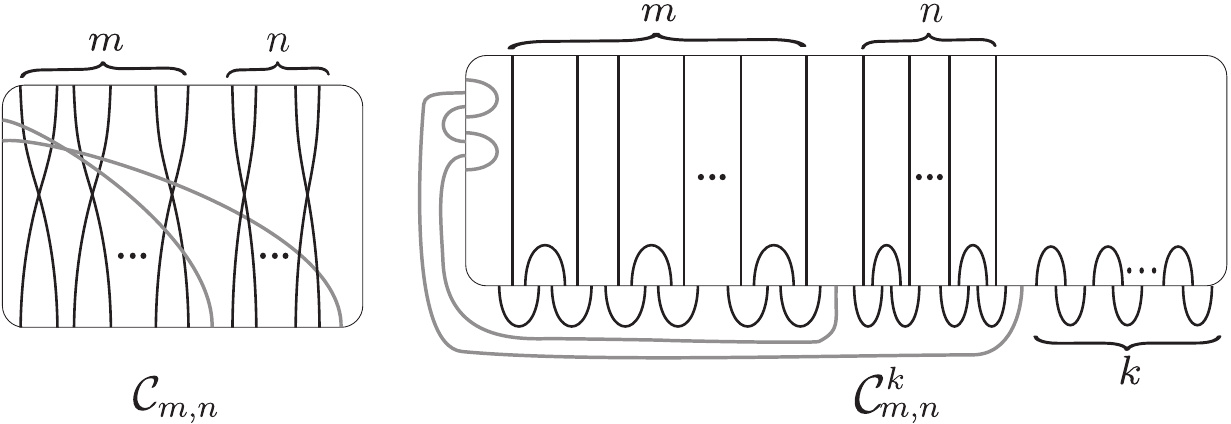}
\caption{\small{The chord diagrams illustrating Example \ref{gap2}.}}
\label{figgap2}
\end{figure}
\end{example}

In Subsection \ref{hthick} we explore links related to bipartite circle (Lando) graphs described in these examples.

\section{Non-nested circle graphs} \label{secnested}

In this section we prove Conjecture \ref{conj} for the family of \emph{non-nested circle graphs}, which are bipartite. The study of bipartite circle graphs is relevant for us, since these are the graphs arising as Lando graphs associated to link diagrams (see Section \ref{sec7}).

\begin{definition}\label{defnonnested}
A bipartite chord diagram $\mathcal{C}$ is said to be a \emph{non-nested chord diagram} if either the inner or outer region bounded by the associated circle does not contain nested chords.  A \emph{non-nested circle graph} is a graph arising from a non-nested chord diagram.
\end{definition}

\begin{theorem}
Let $G$ be a non-nested circle graph. Then its independence simplicial complex $I_G$ is homotopy equivalent to a wedge of spheres.
\end{theorem}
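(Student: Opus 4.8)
The plan is to proceed by induction on the number of chords in a non-nested chord diagram $\mathcal{C}$ realizing $G$, using the \emph{Domination lemma} (Lemma~\ref{dominationlemma}) and Proposition~\ref{Reiner} exactly as in the permutation-graph case. The base cases ($\mathcal{C}$ empty or a single chord) hold since $I_\emptyset = S^{-1}$ and $I_{\{v\}}$ is contractible. For the inductive step, if $G$ is disconnected we write $G = G_1 \sqcup G_2$ and use $I_G = I_{G_1} * I_{G_2}$ together with Proposition~\ref{propwedge}(1),(5) and the inductive hypothesis. So we may assume $G$ is connected and arises from a non-nested bipartite chord diagram; by Definition~\ref{defnonnested} we may assume (relabelling inner/outer if needed) that the \emph{inner} region contains no nested chords.

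The heart of the argument is to locate, in any connected non-nested diagram with at least two chords, a pair of vertices $v,w$ with $v$ dominating $w$, so that Lemma~\ref{dominationlemma} reduces $I_G$ to $I_{G-v}$ (case (1)) or to $I_{G-v} \vee \Sigma I_{G-st(v)}$ (case (2)), both of which are independence complexes of graphs realized by non-nested chord diagrams with fewer chords — note that deleting a chord from a non-nested diagram, and deleting the closed neighbourhood of a chord, both preserve the non-nested property, which must be checked but is immediate. To find the dominating pair, I would look at the inner chords: since no two inner chords are nested, they are linearly ordered along the circle and any two of them either are disjoint or cross ``side by side''. Pick an inner chord $s$ that is extremal in this order (its endpoints bound an arc containing no other inner endpoint). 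Then the outer chords meeting $s$ are controlled, and one shows that an appropriate neighbour of $s$ among the chords sharing that extremal arc is dominated by $s$ (or vice versa), in complete analogy with the two cases ``$s_1,s_2$ cross / do not cross'' in the permutation-graph proof. Once the dominating pair is produced, the inductive hypothesis applied to the one or two smaller non-nested circle graphs finishes the proof via Proposition~\ref{propwedge}(2),(5),(7).

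The main obstacle I anticipate is the combinatorial bookkeeping needed to guarantee that a dominating pair always exists in a connected non-nested diagram: unlike permutation diagrams, here one must simultaneously track the inner chords (which are unnested, hence essentially a ``staircase'') and the outer chords (which may be nested arbitrarily), and argue that the extremal inner chord together with whatever outer or inner chord shares its outermost arc gives the domination. One has to be careful that $G$ being connected prevents the degenerate situation where the extremal inner chord is isolated or where no suitable partner exists; the connectedness plus minimality of the chosen arc should force exactly the two cases already handled in Section~\ref{secpermut}. A secondary, routine point is verifying stability of the non-nested condition under the operations $G \mapsto G-v$ and $G \mapsto G - st(v)$ and under passing to connected components, so that the inductive hypothesis genuinely applies; this is straightforward since removing chords never creates nestings.
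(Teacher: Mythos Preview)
Your plan has a genuine gap: connected non-nested circle graphs need \emph{not} contain a dominating pair of vertices, so the reduction via Lemma~\ref{dominationlemma} cannot be carried out in general. The smallest counterexample is the hexagon $C_6$ itself (the Lando graph of Figure~\ref{hexagon}), which is non-nested, connected, leafless, and has no dominating pair: every vertex has degree~$2$, there are no triangles (ruling out case~(2) of Lemma~\ref{dominationlemma}), and no two vertices share the same link (ruling out case~(1)). More concretely, in your language an extremal inner chord $s$ has as neighbours only outer chords, while each such outer chord $a$ has as neighbours only inner chords; hence $s$ can dominate $a$ only when $a$ is a leaf, and conversely. Once leaves are absent your ``extremal inner chord plus partner'' argument simply does not produce a domination.

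The paper's proof takes a different route for exactly this reason. After handling the leaf case via Corollary~\ref{addingleave}, it observes that a leafless non-nested graph must contain an \emph{inner even cycle} $C_{2k+2}$ (built from an extremal run of inner chords together with the two outer chords bounding it), and then uses Csorba's Theorem~\ref{csorba} repeatedly to shrink the path $L_{2k}$ inside that cycle down to length $1$, $2$ or $3$ according to $2k \bmod 3$. Only \emph{after} this shrinking does a domination (or a contractibility via Corollary~\ref{pathwithL3}) appear, and the three residues are treated separately. The resulting smaller graphs are again non-nested, so induction applies. If you want to salvage your approach, the missing ingredient is precisely this use of Theorem~\ref{csorba} to \emph{create} a dominating pair where none exists.
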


\begin{proof}
We proceed by induction on the number of vertices of $G$. It is clear that the statement holds for an empty graph, as well as a graph with
one or two vertices. Now, assume as inductive hypothesis that it holds for any non-nested graph with $n$ vertices, and assume that $G$ is a non-nested graph containing $n+1$ vertices, $n \geq 2$. The inductive step falls within one of the following possibilities:
\begin{enumerate}

\item[(1)] If $G$ contains a leaf, with associated preleaf $v$, then $I_G\sim \Sigma I_{G-st(v)}$ by Corollary \ref{addingleave}, and by inductive hypotesis $I_{G-st(v)}$ is homotopy equivalent to a wedge of spheres, so is $I_G$.

\begin{figure}
\centering
\includegraphics[width = 9cm]{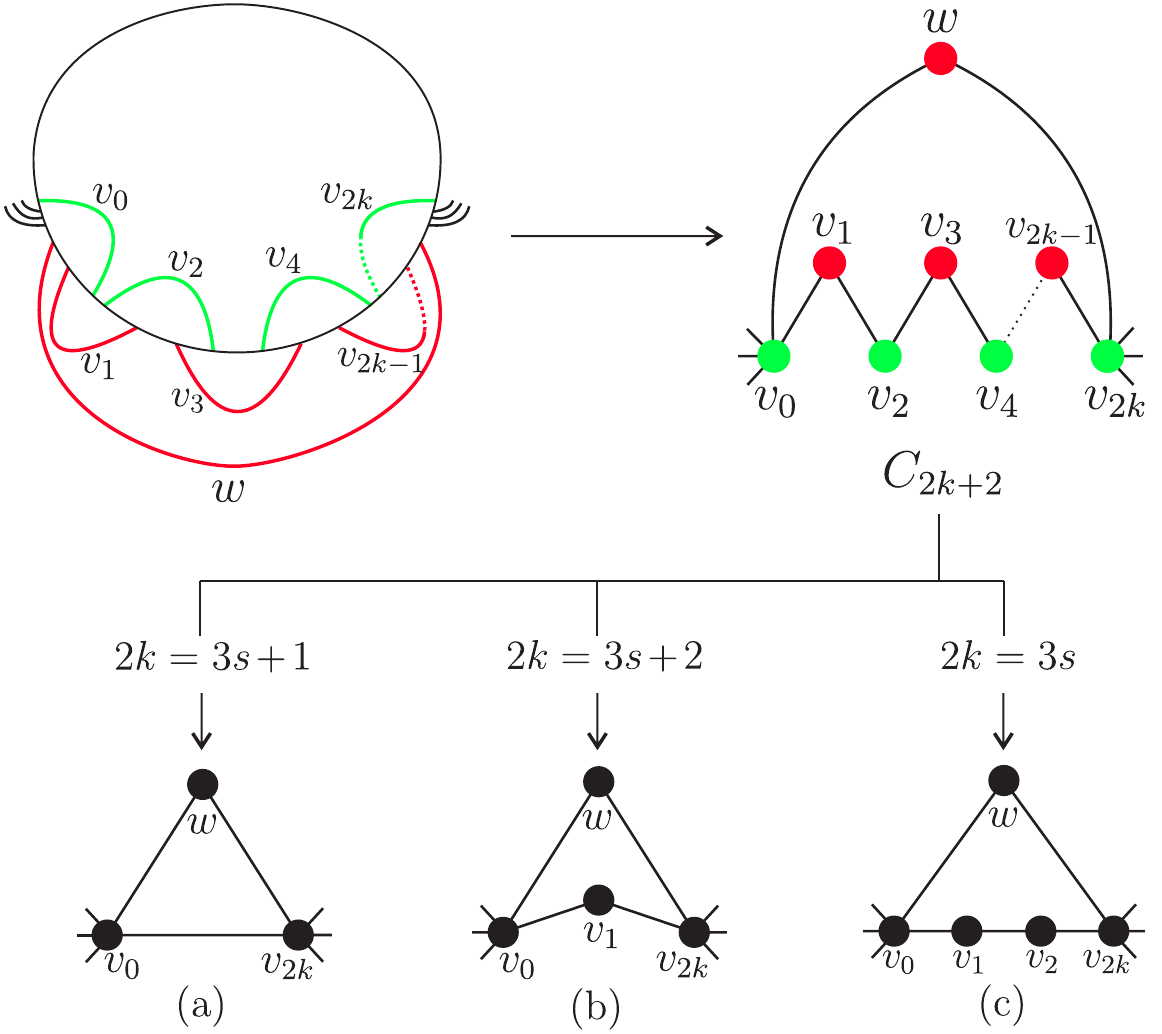}
\caption{\small{The chord diagram leading to the graph $C_{2k+2}$ and its transformations after applying $s$, $s$ and $s-1$ times Corollary~\ref{csorba}, respectively. In the first row the case $k=3$ is shown when considering the dotted lines as if they were solid.}}
\label{fignonnest}
\end{figure}

\item[(2)] Assume that $G$ has no vertices of degree $1$; hence, it contains an inner (in the sense that it bounds a region with no vertices) even-cycle graph, $C_{2k+2}$, as shown in Figure \ref{fignonnest}. Let $L_{2k}$ be the subgraph of $C_{2k+2}$ consisting of the $v_0, v_1, \ldots, v_{2k}$ path of length $2k$. Then, by applying a finite number of times Theorem \ref{csorba}, the length of $L_{2k}$ can be reduced to 1, 2 or 3, depending on the value of $2k$ module 3 (note that the new graphs are not necessarily bipartite). Figure \ref{fignonnest} illustrates the different cases:
    
    (a) If $2k = 3s + 1$, then $I_G \sim_h \Sigma^s I_{G-L_{2k}-v_{2k}} \vee \Sigma^{s+1} I_{G-L_{2k}-v_0-st_G(v_{2k})}$, where $G - L_{2k}$ is the graph obtained by removing the $2k-1$ inner vertices of the path $L_{2k}$. After applying $s$ times Corollary~\ref{csorba}, one gets $I_G \sim_h \Sigma^s I_H$, with $H$ being the graph obtained from $G$ by contracting $C_{2k+2}$ to $C_3$ according to Figure \ref{fignonnest}(a). Now $v_{2k}$ dominates $w$ in $H$ and therefore by Lemma \ref{dominationlemma}(2) $I_H \sim_h I_{H-v_{2k}} \vee \Sigma I_{H-st_H(v_{2k})}$. Hence $I_G \sim_h \Sigma^s I_{G-L_{2k}-v_{2k}} \vee \Sigma^{s+1} I_{G-L_{2k}-v_0-st_G(v_{2k})}$.

    (b) If $2k = 3s + 2$, then $I_G \sim_h \Sigma^s I_{G - L_{2k}}$. The result holds after applying $s$ times Corollary \ref{csorba} and Lemma \ref{dominationlemma}(1), since $v_1$ dominates $w$ in Figure \ref{fignonnest}(b).

    (c) If $2k = 3s$, then $I_G \sim_h \Sigma^{s+1} I_{G - L_{2k} - st_G(v_0) - st_G(v_{2k})}$. To get this result start by applying $s-1$ times Corollary \ref{csorba} to get $I_G \sim_h \Sigma^{s-1} I_H$, with $H$ being the graph obtained from $G$ by contracting $C_{2k+2}$ to $C_5$ according to Figure \ref{fignonnest}(c). By Corollary \ref{pathwithL3} $I_{H-v_{2k}}$ is contractible and therefore by Proposition \ref{Reiner} $I_H \sim_h \Sigma I_{H-st_H(v_{2k})}$. Corollary \ref{addingleave} implies $I_H \sim_h \Sigma^2 I_{H-st_H(v_{2k})-st_H(v_0)}$. Therefore, $I_G \sim_h \Sigma^{s+1} I_{G - L_{2k} - st_G(v_0) - st_G(v_{2k})}$.

Since the four graphs appearing in the previous expressions of $I_G$ are non-nested graphs, the inductive hypothesis completes the proof.
\end{enumerate}
\end{proof}

\begin{remark}
Starting from an outerplanar graph $G$, consider its barycentric subdivision $G_2$. The bipartite graph $G_2$ is easily recognized to be a non-nested circle graph (see Figure \ref{baricentric}). This observation is outside the scope of this paper, but it is related to the statement by Csorba ``$I_{G_2}$ is homotopy equivalent to the suspension of the combinatorial Alexander dual of $I_G$'' and by Cabello and Jejcic ``$G$ is an outerplanar graph if and only if $G_2$ is a circle graph'' in \cite[Theorem 6]{Csorba} and \cite{Cabello}, respectively. These results deserve attention in the study of the independence complexes of circle graphs.
\end{remark}

\begin{figure}
\centering
\includegraphics[width = 11.2cm]{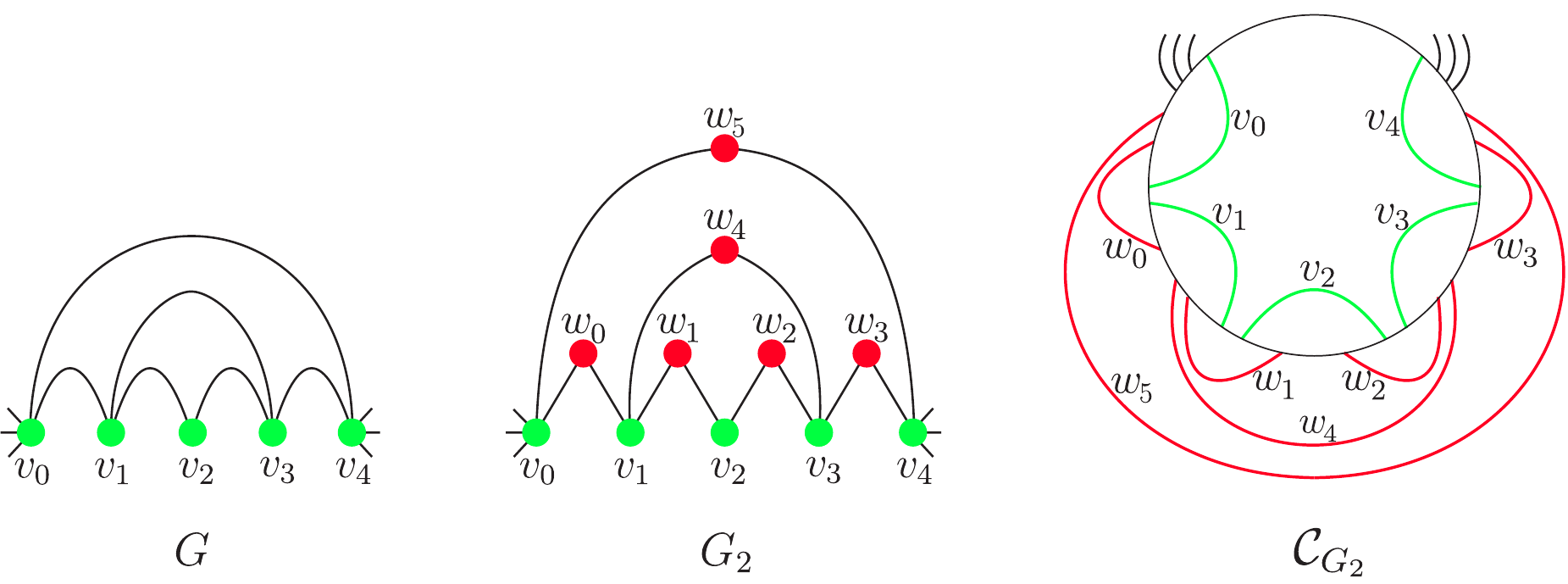}
\caption{\small{An outerplanar graph $G$, its barycentric subdivision $G_2$ and its associated non-nested chord diagram $\mathcal{C}_{G_2}$.}}
\label{baricentric}
\end{figure}

\section{Structure Theorem}\label{sec6}

In this section we apply the ``principle for gluing homotopies" \cite{Bro-1, Brown, Bjo} to obtain several useful properties of graphs and their
independence complexes. In particular, Theorem \ref{globaltheo} generalizes Theorem \ref{csorba} by Csorba \cite{Csorba}, the bipartite suspension theorem by Nagel and Reiner \cite{Nagel} and its generalization by Barmak and Jonsson \cite{Barmak, Jonsson}.

We start from a series of simple but useful lemmas. Given a vertex $v$ of a loopless graph $G$, write $A_v = I_{G-st(v)} * v$.

\begin{lemma} \label{Lema1}
\begin{enumerate}
\item[(1)] Let $G$ be a loopless graph and $v$ a vertex in $G$, with $lk(v) = \{v_0, v_1 \ldots, v_k \}$. Then
$$I_G = \bigcup_{i=0}^k A_{v_i} \cup A_v.$$

\item[(2)] Let $s = \{v_0, \ldots, v_n\}$ be an independent set in a graph $G$. Thus they constitute an $n$-dimensional simplex in $I_G$, $\Delta^n$. Then
$$\bigcap_{i=0}^{n} A_{v_i} = I_{G - st(v_0) - \ldots - st(v_n)} * \Delta^n,$$
which is contractible.
\end{enumerate}
\end{lemma}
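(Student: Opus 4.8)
The plan is to prove both parts by a direct unwinding of the definitions of $A_v$, $I_G$, and the cone/join operations, plus the elementary observation that any cone (join with a nonempty complex containing a vertex, in particular a join with a simplex) is contractible.

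For part (1), I would argue that $I_G = \bigcup_{i=0}^k A_{v_i} \cup A_v$ by double inclusion. The inclusion ``$\supseteq$'' is immediate: each $A_{v_i} = I_{G - st(v_i)} * v_i$ is a subcomplex of $I_G$ (its simplices are exactly those independent sets of $G$ containing $v_i$ together with vertices outside $st(v_i)$, i.e. independent sets of $G$ containing $v_i$, plus their faces), and similarly for $A_v$. For ``$\subseteq$'', take any simplex $\sigma \in I_G$, i.e. an independent set of $G$. If $v \in \sigma$ then $\sigma \in A_v$ (since $\sigma \setminus v$ avoids $st(v)$). If $v \notin \sigma$, I distinguish two subcases: if $\sigma \cap lk(v) = \emptyset$ then $\sigma \cup \{v\}$ is still independent, so $\sigma$ is a face of a simplex in $st_{I_G}(v) = A_v$, hence $\sigma \in A_v$; if instead some $v_i \in \sigma \cap lk(v)$, then $\sigma$ contains $v_i$ and is independent, so $\sigma \subseteq lk(v_i) \cup \{v_i\}$ complement-wise, i.e. $\sigma \setminus v_i$ avoids $st(v_i)$, giving $\sigma \in A_{v_i}$. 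This exhausts all cases. (One should double-check the degenerate situation $lk(v) = \emptyset$, where the union is just $A_v = I_G$ since then $v$ is isolated and every simplex either contains $v$ or can be extended by it.)

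For part (2), the key identity is purely set-theoretic at the level of simplices. A simplex of $\bigcap_{i=0}^n A_{v_i}$ is a subset $\tau$ of $V(G)$ that, for each $i$, lies in $A_{v_i}$; writing $\tau = \tau' \sqcup (\tau \cap \{v_0,\dots,v_n\})$, membership in $A_{v_i}$ for all $i$ forces $\tau'$ to be an independent set of $G$ disjoint from $st(v_i)$ for every $i$, i.e. $\tau'$ is a simplex of $I_{G - st(v_0) - \cdots - st(v_n)}$, while the part of $\tau$ among the $v_i$ is an arbitrary subset of $\{v_0,\dots,v_n\}$, i.e. a face of $\Delta^n$ (here we use that $\{v_0,\dots,v_n\}$ is independent, so every subset is a simplex, and that each $v_i \notin st(v_j)$ would fail — but $v_i$ need only avoid being forced out, and since $v_i$ is allowed in $A_{v_i}$ and is independent from the $v_j$, all is consistent). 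This gives exactly $\bigcap_{i=0}^n A_{v_i} = I_{G - st(v_0) - \cdots - st(v_n)} * \Delta^n$. Contractibility then follows from Proposition \ref{propwedge}(6): $\Delta^n$ is a cone (e.g. $\Delta^n = \Delta^{n-1} * v_n$, and more crudely $\Delta^n \supseteq b$ as an apex), so $\mathcal{K} * \Delta^n \sim_h \mathcal{K} * b \sim_h b$ for any $\mathcal{K}$, including the empty complex.

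\textbf{Main obstacle.} The arguments are essentially bookkeeping, so there is no deep obstacle; the one place to be careful is the case analysis in part (1) — specifically making sure that a simplex $\sigma$ with $v \notin \sigma$ but $\sigma$ not extendable by $v$ genuinely meets $lk(v)$ and that this places $\sigma$ in the corresponding $A_{v_i}$ with the right ``avoids $st(v_i)$'' condition (it does, because an independent set containing $v_i$ automatically avoids $st(v_i) \setminus \{v_i\} = lk(v_i)$). A secondary subtlety is treating the empty simplex and isolated/degenerate vertices uniformly, which the convention $\emptyset \in I_G$ and the contractibility of any cone handle cleanly.
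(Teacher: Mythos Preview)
Your proposal is correct and follows essentially the same approach as the paper. For part~(1) the paper gives a one-line version of your case analysis (``either $s$ contains a vertex of $lk(v)$ or $s\cup\{v\}$ is independent''), and for part~(2) the paper phrases the same computation via the flag property of $I_G$ rather than your explicit decomposition $\tau=\tau'\sqcup(\tau\cap\{v_0,\dots,v_n\})$, but the content is identical; your parenthetical about ``$v_i\notin st(v_j)$ would fail'' is a bit garbled in wording, though the point you are making (that $v_i\in st(v_i)$ is harmless because $A_{v_i}$ explicitly re-adjoins $v_i$) is the right one.
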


\begin{proof}(1) If $s \in I_G$, then $s$ either contains one of the vertices in the link of $v$, or $s \cup \{v\}$ is an independent set.

(2) The left side of the equality in (2) describes the flag simplicial subcomplex, $\mathcal{X}$, of $I_G$ whose simplices are characterized by the property that if $\sigma \in P(\mathcal{X})$ then the simplex $\sigma \cup v_i$ is also in $P(\mathcal{X})$ for any $0 \leq i \leq n$.  Let $\sigma$ be a simplex in $P(\mathcal{X})$. Since $s$ and $\sigma$ are independent sets in $G$ and $\sigma \cup v_i$ is an independent set for every $i$, then $\sigma \cup s$ is an independent set by the flag property of $\mathcal{X}$, hence a simplex in $P(\mathcal{X})$. Thus $\mathcal{X} = I_{G-st(v_o)-...-st(v_n)} * \Delta^n$.
\end{proof}

\begin{theorem} {\rm{\cite{Brown}}} \label{brown}
Let $\mathcal{X}$ be a simplicial complex, $A$ and $B$ two subcomplexes such that $A \cup B = \mathcal{X}$. If $A$ and $B$ are contractible then $\mathcal{X}$ is homotopy equivalent to the suspension of the intersection of $A$ and $B$, that is, $\mathcal{X}  \sim_h \Sigma (A \cap B)$. In particular if $A \cap B$ is contractible then $\mathcal{X}$ is contractible.
\end{theorem}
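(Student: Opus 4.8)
The plan is to exhibit $\mathcal{X}$ as a homotopy pushout and then apply the gluing lemma for homotopy equivalences (the ``principle for gluing homotopies'', \cite{Bro-1, Brown, Bjo}). Since $A$ and $B$ are subcomplexes of the finite simplicial complex $\mathcal{X}=A\cup B$, the intersection $A\cap B$ is again a subcomplex, and the inclusions $A\cap B\hookrightarrow A$ and $A\cap B\hookrightarrow B$ are closed cofibrations (a subcomplex inclusion of CW complexes has the homotopy extension property, i.e.\ it is an NDR pair). Hence the pushout square with corners $A\cap B,\ A,\ B,\ \mathcal{X}$ is a homotopy pushout, which is to say that $\mathcal{X}$ is homotopy equivalent to the double mapping cylinder $M(A\leftarrow A\cap B\rightarrow B)$.

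Next I would invoke the gluing lemma in the following form: given a morphism of diagrams $(A\leftarrow A\cap B\rightarrow B)\to(A'\leftarrow C'\rightarrow B')$ whose horizontal maps are cofibrations and whose three vertical maps are homotopy equivalences, the induced map of double mapping cylinders is a homotopy equivalence. I apply this to the morphism from $(A\leftarrow A\cap B\rightarrow B)$ to $(\ast\leftarrow A\cap B\rightarrow\ast)$ given by the identity on $A\cap B$ and by the constant maps $A\to\ast$, $B\to\ast$; these constant maps are homotopy equivalences precisely because $A$ and $B$ are contractible. Therefore $\mathcal{X}\sim_h M(\ast\leftarrow A\cap B\rightarrow\ast)$, and by definition this double mapping cylinder is the (unreduced) suspension $\Sigma(A\cap B)$. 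This proves $\mathcal{X}\sim_h\Sigma(A\cap B)$; in keeping with the conventions of Section~\ref{Prel} this reads correctly even when $A\cap B=\emptyset$, where $\Sigma(\emptyset)=S^0$.

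The final assertion is then immediate: the unreduced suspension of a contractible complex is contractible (one may see this directly, or recall that $\Sigma\mathcal{K}=\mathcal{K}\ast S^0$ and apply Proposition~\ref{propwedge}(2)), so if $A\cap B$ is contractible then $\mathcal{X}$ is contractible as well.

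The only step that demands care — and the one I would single out as the main, though mild, obstacle — is justifying that an ordinary pushout of simplicial complexes along a subcomplex inclusion computes the homotopy pushout, and that we are genuinely within the hypotheses of the gluing lemma. In the finite simplicial setting this is routine, so the difficulty is expository rather than mathematical; an alternative self-contained route is to collapse the contractible subcomplexes $A$ and $B$ to points directly inside $\mathcal{X}$, using that collapsing a contractible subcomplex of a CW complex does not change its homotopy type, but the gluing-lemma argument is shorter and matches the references cited at the start of this section.
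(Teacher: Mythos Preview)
The paper does not supply its own proof of this theorem; it is quoted from \cite{Brown} and used as a black box. Your argument via homotopy pushouts and the gluing lemma is correct and is essentially the proof one finds in that reference, so there is nothing to compare.

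One small wording issue: you state the gluing lemma as requiring ``horizontal maps are cofibrations'' and then apply it with target row $\ast\leftarrow A\cap B\rightarrow\ast$, where the maps to a point are certainly not cofibrations. What you actually need (and what holds) is that the double mapping cylinder sends objectwise homotopy equivalences of span diagrams to homotopy equivalences, with no cofibrancy hypothesis on the target row; the cofibrancy of the source row is only used to identify the strict pushout $\mathcal{X}$ with its double mapping cylinder. You might rephrase the lemma accordingly, but the mathematics is fine.
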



\begin{lemma} \label{Lema2}
Let $G$ be a loopless graph and $s = \{v_1,v_2, \ldots, v_n\}$ an independent set. Then the simplicial complex $(A_1\cup \ldots \cup A_k)\cap A_{k+1} \cap \ldots \cap A_n$ is contractible for every $1 \leq k \leq n$. In particular, $A_1\cup A_2\cup \ldots \cup A_n$ is contractible.
\end{lemma}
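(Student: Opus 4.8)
The plan is to prove, more generally, that for any two \emph{disjoint} subsets $P,Q\subseteq\{1,\dots,n\}$ with $P\neq\emptyset$, the subcomplex $\bigl(\bigcup_{i\in P}A_i\bigr)\cap\bigl(\bigcap_{j\in Q}A_j\bigr)$ of $I_G$ is contractible, with the convention that an empty intersection is $I_G$. The statement of the lemma is the case $P=\{1,\dots,k\}$, $Q=\{k+1,\dots,n\}$, and the ``in particular'' clause is the case $Q=\emptyset$. Passing to this more flexible formulation (in which $Q$ is not forced to be exactly the complement of $P$) is what makes an induction on $|P|$ close up; this is really the only point requiring care.

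First I would record that for any non-empty $S\subseteq\{1,\dots,n\}$ the vertices $\{v_i:i\in S\}$ form an independent set, being a subset of $s$, so Lemma~\ref{Lema1}(2) applies and yields $\bigcap_{i\in S}A_i = I_{G-\bigcup_{i\in S}st(v_i)}*\Delta_S$, where $\Delta_S$ is the full simplex on $\{v_i:i\in S\}$; since $\Delta_S\neq\emptyset$, this join is a cone, hence contractible. This settles the base case $|P|=1$ immediately: for $P=\{i\}$ the complex in question is $A_i\cap\bigcap_{j\in Q}A_j=\bigcap_{j\in Q\cup\{i\}}A_j$, contractible by the above (with $S=Q\cup\{i\}\neq\emptyset$).

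For the inductive step, assuming $|P|\ge 2$ and the claim for all strictly smaller union-sets, I would fix $i_0\in P$, put $P'=P\setminus\{i_0\}$, and use distributivity of $\cap$ over $\cup$ of subcomplexes to write the complex as $U\cup V$ with $U=\bigl(\bigcup_{i\in P'}A_i\bigr)\cap\bigcap_{j\in Q}A_j$ and $V=A_{i_0}\cap\bigcap_{j\in Q}A_j$. Then $U$ is contractible by the inductive hypothesis (union-set $P'$, intersection-set $Q$, still disjoint); $V=\bigcap_{j\in Q\cup\{i_0\}}A_j$ is contractible by Lemma~\ref{Lema1}(2); and $U\cap V=\bigl(\bigcup_{i\in P'}A_i\bigr)\cap\bigcap_{j\in Q\cup\{i_0\}}A_j$ is again contractible by the inductive hypothesis (union-set $P'$, intersection-set $Q\cup\{i_0\}$, which is disjoint from $P'$ since $i_0\notin P'$). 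Applying Theorem~\ref{brown} to the two contractible subcomplexes $U$ and $V$, whose union is our complex and whose intersection is contractible, shows the union is contractible, completing the induction.

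The only genuine obstacle is choosing the right inductive statement: once $Q$ is permitted to vary, the three contractibility checks ($U$, $V$, and $U\cap V$) all reduce either to Lemma~\ref{Lema1}(2) or to strictly smaller instances of the claim, and Theorem~\ref{brown} (equivalently, the gluing/nerve principle alluded to at the start of this section) does the rest. No homology computations or explicit cell structures are needed.
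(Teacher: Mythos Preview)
Your proof is correct and follows essentially the same route as the paper's: peel off one $A_{i_0}$ from the union via distributivity, verify that the two pieces and their intersection are contractible (using Lemma~\ref{Lema1}(2) and the inductive hypothesis), and conclude with Theorem~\ref{brown}. The only difference is packaging: the paper inducts on pairs $(n,k)$ lexicographically, implicitly relabeling when an index drops out, whereas your formulation with arbitrary disjoint $P,Q$ lets you induct on $|P|$ alone and avoids any relabeling---a mild streamlining of the same argument.
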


\begin{proof}
We proceed by induction on pairs $(n,k)$ ordered lexicographically, that is $(n_1,k_1) < (n_2,k_2)$ if either $n_1 < n_2$ or $n_1 = n_2$ and $k_1 < k_2$. By Lemma \ref{Lema1} the product $A_1\cap A_2\cap \ldots \cap A_n$ is contractible, hence the base case when $k=1$ (any value of $n$) holds. Now, in the inductive step, consider $(n,k)$ with $k > 1$ and assume that the result holds for smaller pairs. We have
$$(A_1 \cup \ldots \cup A_k) \cap A_{k+1} \cap \ldots \cap A_n = $$ $$((A_1\cup \ldots \cup A_{k-1}) \cap (A_{k+1}\cap \ldots \cap A_n)) \cup (A_k \cap (A_{k+1}\cap \ldots \cap A_n)).$$

The summands (with respect to $\cup$) in the right side of the equality are contractible by inductive assumption. Next, by Theorem \ref{brown}, to prove that the whole complex is contractible it suffices to check that their product ($\cap$) is contractible. We get
$$((A_1 \cup \ldots \cup A_{k-1})\cap (A_{k+1} \cap \ldots \cap A_n)) \cap (A_k\cap (A_{k+1} \cap \ldots \cap A_n)) = $$
$$(A_1 \cup \ldots \cup A_{k-1})\cap A_k \cap A_{k+1} \cap \ldots \cap A_n,$$
which is contractible by the inductive assumption.
\end{proof}

Now we present our main result in this section, which generalizes and extends several results, as will be shown in the subsequent corollaries.

\begin{theorem}\label{globaltheo}
Let $G$ be a loopless graph, $v \in G$ a vertex of degree $n$, such that the set $lk(v)=\{v_1,...,v_n\}$ is an independent set. Write $W_i = lk(v_i) - v$ ($1 \leq i \leq n$) and define $\mathcal{K}(G,v)$ as the complex obtained from $I_{G-st(v)}$ by deleting the simplexes $s$ such that $s \cap W_i \neq \emptyset$ in $I_{G-st(v)}$ for every i. Then $I_G$ is homotopy equivalent to the suspension of $\mathcal{K}(G,v)$.
\end{theorem}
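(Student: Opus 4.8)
The plan is to realize $I_G$ as the union of the cover $\{A_{v_i}\}_{i=1}^n \cup \{A_v\}$ provided by Lemma~\ref{Lema1}(1), and to analyze this union using the gluing machinery of Theorem~\ref{brown} together with the contractibility statements of Lemma~\ref{Lema1}(2) and Lemma~\ref{Lema2}. First I would set $B = A_{v_1} \cup \cdots \cup A_{v_n}$, so that $I_G = B \cup A_v$. By Lemma~\ref{Lema2}, $B$ is contractible, and $A_v = I_{G-st(v)} * v$ is contractible since it is a cone. Hence Theorem~\ref{brown} applies directly and gives $I_G \sim_h \Sigma(B \cap A_v)$. It therefore remains to identify $B \cap A_v$ with $\mathcal{K}(G,v)$ (up to homotopy, or on the nose as a subcomplex of $I_{G-st(v)}$).

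The heart of the argument is this identification. Writing $B \cap A_v = \bigcup_{i=1}^n (A_{v_i} \cap A_v)$, I would first observe that $A_{v_i} \cap A_v$ is the subcomplex of $A_v = I_{G-st(v)} * v$ consisting of those simplexes that also lie in $A_{v_i} = I_{G-st(v_i)} * v_i$. Since $v_i \in lk(v)$, the vertex $v$ is not in $A_{v_i}$, so $A_{v_i}\cap A_v$ lies inside $I_{G-st(v)}$ (no simplex containing $v$ survives). A simplex $\sigma \in I_{G-st(v)}$ lies in $A_{v_i}$ precisely when either $\sigma \subseteq I_{G-st(v_i)}$ (i.e.\ $\sigma$ avoids $st(v_i)$, equivalently $\sigma \cap W_i = \emptyset$ since $v\notin\sigma$ already) or $\sigma \cup v_i$ is a simplex of $I_{G-st(v_i)}*v_i$, which forces $\sigma \cap W_i = \emptyset$ as well. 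So $A_{v_i}\cap A_v$ is exactly the full subcomplex of $I_{G-st(v)}$ spanned by simplexes $\sigma$ with $\sigma \cap W_i = \emptyset$. Taking the union over $i$, a simplex $\sigma \in I_{G-st(v)}$ lies in $B\cap A_v$ iff $\sigma\cap W_i = \emptyset$ for \emph{some} $i$, i.e.\ it is \emph{not} the case that $\sigma \cap W_i \neq \emptyset$ for every $i$. That is precisely the definition of $\mathcal{K}(G,v)$ as the complex obtained from $I_{G-st(v)}$ by deleting the simplexes meeting every $W_i$. Hence $B\cap A_v = \mathcal{K}(G,v)$ and $I_G \sim_h \Sigma \mathcal{K}(G,v)$.

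I expect the main obstacle to be the bookkeeping in the previous paragraph: one must be careful that $\mathcal{K}(G,v)$, as defined, is genuinely a simplicial complex (closed under taking faces), which it is, since deleting the upward-closed family of simplexes meeting every $W_i$ leaves a downward-closed family; and one must verify that the two descriptions of a simplex lying in $A_{v_i}$ — either inside the base or coning off with $v_i$ — both reduce to the single condition $\sigma \cap W_i = \emptyset$, using that $v$ has already been excluded. A secondary point requiring a sentence of care is why $A_v$ and $B$ are honestly subcomplexes of $I_G$ whose union is all of $I_G$; this is exactly Lemma~\ref{Lema1}(1), so I would just cite it. No deeper topological input is needed beyond Theorem~\ref{brown}; the proof is essentially a careful unwinding of the combinatorics of the cover.

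Finally I would remark, as the paper surely intends in the subsequent corollaries, that specializing to $n=1$ recovers the cone-construction statement and, with a subdivided edge, Csorba's Theorem~\ref{csorba}, while the case where $G$ is bipartite with $v$ on one side recovers the Nagel--Reiner bipartite suspension theorem and its Barmak--Jonsson generalization; but those are consequences to be spelled out after the proof, not part of it.
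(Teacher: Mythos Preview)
Your proof is correct and follows essentially the same route as the paper: decompose $I_G$ via Lemma~\ref{Lema1}(1) as $(\bigcup_i A_{v_i}) \cup A_v$, invoke Lemma~\ref{Lema2} and Theorem~\ref{brown} to obtain $I_G \sim_h \Sigma\big((\bigcup_i A_{v_i}) \cap A_v\big)$, and then identify the intersection with $\mathcal{K}(G,v)$. Your identification step is in fact more carefully spelled out than the paper's, which simply notes that simplexes of the intersection avoid $v$ and all $v_i$ and must miss some $W_i$.
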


\begin{proof}
By definition $A_v$ and $A_{v_i}$ are contractible, for every $1 \leq i \leq n$. Moreover, as the vertices in $lk(v)$ are independent, by Lemma \ref{Lema2} $\cup_{i=1}^k A_i$ is contractible. Thus by Lemma \ref{Lema1} $I_G = \cup_{i=1}^k A_i \cup A_v$, and from Theorem \ref{brown} it holds that $I_G$ is homotopy equivalent to the suspension of $(\cup_{i=1}^k A_i) \cap A_v$.

It remains to identify this simplicial complex with $\mathcal{K}(G,v)$. Firstly, note that any simplex $s \in (\cup_{i=1}^k A_i) \cap A_v$ contains neither $v$ nor $v_i$ ($1 \leq i \leq n$) as a vertex. Furthermore, $s \cap W_i$ must be empty for some $i$ (and this is the reason for the deletion in the statement). There are not other restrictions for $s$, thus the simplexes of $(\cup_{i=1}^k A_i) \cap A_v$ are exactly the same as those in $\mathcal{K}(G,v)$.
\end{proof}

\begin{remark}
Note that in the statement of Theorem \ref{globaltheo} we do not assume that $W_i \cap W_j = \emptyset$. Moreover, in general $\mathcal{K}(G,v)$ is not a flag simplicial complex when $n \geq 3$, as illustrated by Examples \ref{nonext1} and \ref{nonext2}.
\end{remark}

\begin{example}\label{nonext1}
Let $G_n$ be the star of $n$ rays of length 2, that is, the wedge of $n$ copies of $L_2$ keeping a vertex $v$ as basepoint (see Figure \ref{star}). For $1 \leq i \leq n$, let $w_i$ be the leaves of $G_n$ and $v_i$ their associated preleaves. Apply now Theorem \ref{globaltheo} taking the wedge vertex as $v$, and $W_i = \{w_i\}$, for every $i$. Hence \, $I_{G_n-st(v)} = \Delta^{n-1}$ and $\mathcal{K}(G_n,v)$ is constructed from it by deleting the simplices having non-empty intersection with every $w_i$, that is, deleting the geometric interior of $\Delta^{n-1}$. Hence $\mathcal{K}(G_n,v)$ is homotopy equivalent to $S^{n-2}$ and therefore, taking its suspension, $I_{G_n} = S^{n-1}$. We confirm this by using Corollary \ref{addingleave} over $w_1$, leading to the sequence $I_{G_n} \sim_h \Susp I_{G_n-st(v_1)} = \Susp I_{\underbrace{(L_1 * \ldots * L_1)}_\text{n-2}} \sim_h S^{n-1}$, as expected.

\begin{figure}
\centering
\includegraphics[width = 3.5cm]{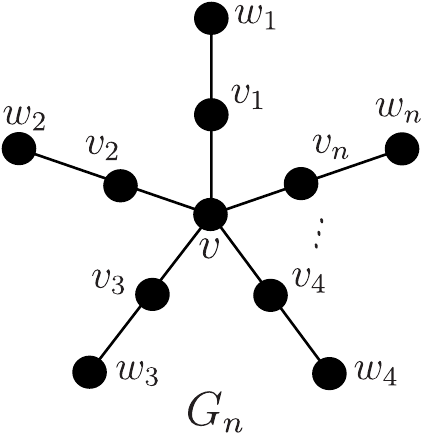}
\caption{\small{$G_n$ is the star of $n$ rays of length 2.}}
\label{star}
\end{figure}
\end{example}

\begin{example}\label{nonext2}
Let $G$ be the wedge of two hexagons. See Figure \ref{figglobaltheo}. We apply Theorem \ref{globaltheo} taking the wedge vertex as $v$, and we get that $lk(v) = \{v_1, v_2, v_3, v_4\}$, and $I_{G-st(v)} = \{\emptyset, 1, 2, 3, 4, 5, 6, 13, 14, 15, 16, 24, 25, 26, 34, 35, \\ 36, 46, 134, 135, 136, 146, 246, 346, 1346\}$. Hence $\mathcal{K}(G,v)$ is constructed from $I_{G-st(v)}$ by deleting the simplex $1346$, which is the only one intersecting $W_i$, for every $1 \leq i \leq 4$. Therefore, $\mathcal{K}(G,v)$ is homotopy equivalent to $S^1 \vee S^2$, and taking its suspension we get that $I_G \sim_h S^2 \vee S^3$, as expected from the computations in Example \ref{exwedge}.
\end{example}

\begin{figure}
\centering
\includegraphics[width = 10cm]{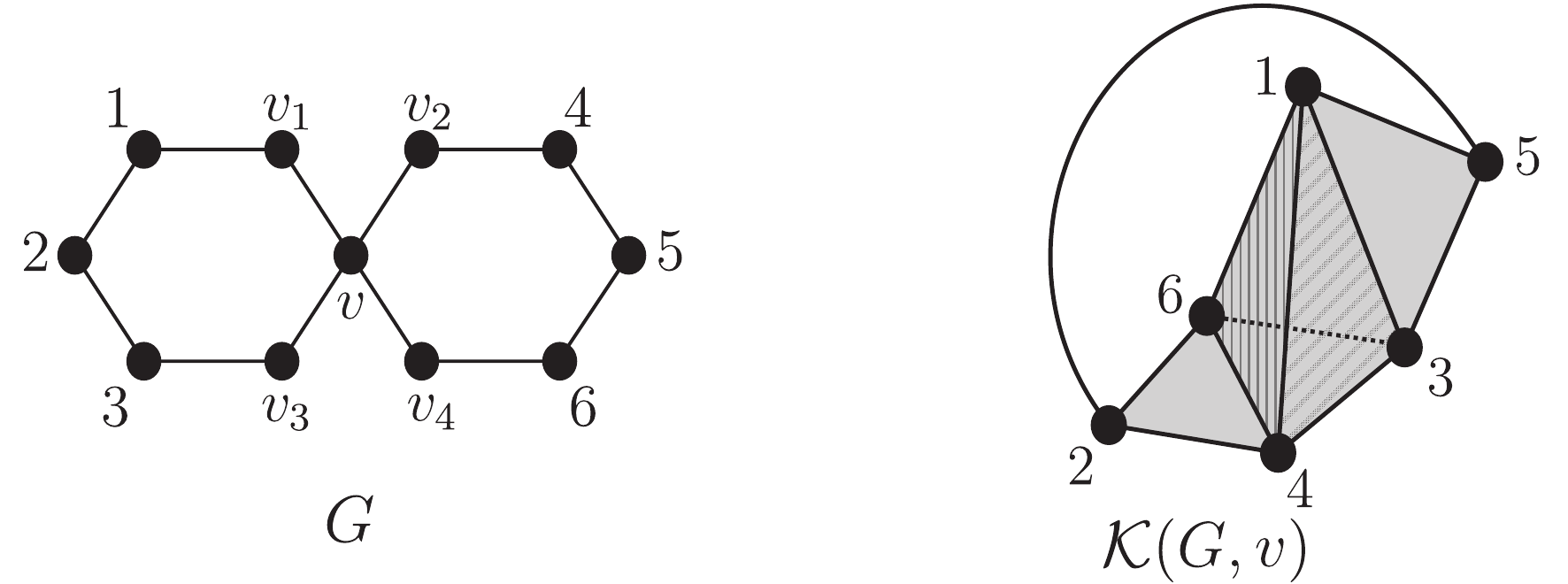}
\caption{\small{The graph $G$, and the complex $\mathcal{K}(G,v)$. Note that the vertices $1, 3, 4, 6$ constitute the 2-skeleton of the tetrahedron (its interior not filled).}}
\label{figglobaltheo}
\end{figure}

Theorem \ref{globaltheo} has many interesting consequences. The following result implies results by Nagel-Reiner \cite{Nagel}, Jonsson \cite{Jonsson} and Barmak \cite{Barmak}:

\begin{corollary} Let $G$ be a loopless graph containing a vertex $v$ such that $lk(v)$ is an independent set (i.e. $v$ is not a vertex of any triangle). Then $I_G$ is homotopy equivalent to the suspension of a simplicial complex. In particular, it holds if $G$ is triangle free or bipartite.
\end{corollary}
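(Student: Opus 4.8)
The plan is to deduce this directly from Theorem \ref{globaltheo} with essentially no extra work. Suppose $G$ is a loopless graph and $v$ is a vertex with $lk(v)=\{v_1,\ldots,v_n\}$ an independent set, so that $v$ lies in no triangle. Apply Theorem \ref{globaltheo} to $G$ and $v$: it yields $I_G \sim_h \Sigma\, \mathcal{K}(G,v)$, where $\mathcal{K}(G,v)$ is the subcomplex of $I_{G-st(v)}$ obtained by deleting those simplexes meeting every $W_i = lk(v_i)-v$. Since $\mathcal{K}(G,v)$ is a simplicial complex, this is precisely the assertion that $I_G$ is homotopy equivalent to the suspension of a simplicial complex.

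For the ``in particular'' clause, I would observe that if $G$ is triangle-free then \emph{every} vertex $v$ has $lk(v)$ independent (an edge inside $lk(v)$ together with $v$ would form a triangle), so as long as $G$ has at least one vertex the hypothesis is met; the edge cases of the empty graph (where $I_G = S^{-1} = \Sigma S^{-2}$, or one simply notes $S^{-1}$ is vacuously a suspension under the conventions of Proposition \ref{propwedge}(3)) and the one-vertex graph (where $I_G$ is a point $= \Sigma\,\emptyset$ up to homotopy) are handled trivially. For $G$ bipartite: a bipartite graph is triangle-free, so this is a special case of the triangle-free statement; I would just say this in one line.

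I do not expect any real obstacle here — the corollary is a direct specialization of Theorem \ref{globaltheo}, and the only thing to be careful about is the degenerate cases with no available vertex $v$ (empty or edgeless graphs), which are handled by inspection using $I_\emptyset = S^{-1}$ and contractibility of the independence complex of a graph with an isolated vertex. One should also remark, for the claim that this recovers Nagel--Reiner, Jonsson and Barmak, that in the bipartite case $\mathcal{K}(G,v)$ can be identified with the relevant complex appearing in their statements; but since the corollary as stated only asserts ``suspension of a simplicial complex'', no such identification is needed for the proof itself, and I would leave the comparison to a remark.
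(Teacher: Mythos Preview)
Your proposal is correct and matches the paper's approach: the corollary is stated there without proof precisely because it is an immediate specialization of Theorem~\ref{globaltheo}, exactly as you argue. Your attention to the degenerate cases (empty or edgeless graphs) and the one-line reduction of bipartite to triangle-free is more than the paper provides, but entirely in line with its intent.
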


Furthermore, there are some particular cases of Theorem \ref{globaltheo} which are specially useful when doing computations. The simplest case, when $n=1$, implies Corollary \ref{addingleave}. Next we list some other interesting cases when $n=2$.

\begin{corollary}\label{lastcor}
\item [(1)] Let $n = 2$. Then $\mathcal{K}(G,v)$ is a flag simplicial complex which can be expressed by $I_H$, with $H$ being a graph obtained from $G-st(v)$ by connecting every vertex in $W_1$ with every vertex in $W_2$. Note that if $w \in W_1 \cap W_2$, then the graph $H$ contains a loop based in $w$. Figure \ref{figpic} illustrates this construction.
\item [(2)] Let $n=2$ and $|W_1| = |W_2| = 1$. Then we get the result of Csorba, Theorem \ref{csorba}.
\item [(3)] We get the direct generalization of Csorba result when considering the case $n=2$ and $|W_1| = 1$ ($|W_2|$ arbitrary).
This case can be thought as contracting a path of length 3, that is, for a loopless graph $G$ with two vertices $w_1$ and $v_2$ connected
by a path $L$ of length three (the two vertices between them having order 2), consider the graph $H$ obtained from $G$ by contracting $L$.
Then $I_G \sim_h \Susp I_H$. As mentioned before, $H$ is allowed to have multiple edges and loops, as shown
in Figure \ref{creatingloop}. \footnote{Given a connected graph $G$, let $G_3$ be the graph obtained by replacing each edge of $G$ by a path of length $3$. Csorba shows that if $G$ is not a tree and has $n$ vertices and $e$ edges, then $I_{G_3}$ is homotopy equivalent to $S^{e-1} \vee S^{n-1}$ \cite{Csorba}. This result follows immediately by using Corollary \ref{lastcor}(3) $n-1$ times till one gets the wedge of $e-n+1$ triangles.}
\end{corollary}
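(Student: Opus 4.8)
The plan is to prove part (1) by a direct comparison of simplex sets, and then to read off (2) and (3) as special cases by combining (1) with the suspension statement of Theorem~\ref{globaltheo}.

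First I would unwind the definitions in the case $n=2$. Since $v$ has degree $2$ and $lk(v)=\{v_1,v_2\}$ is independent, $st(v)=\{v,v_1,v_2\}$, and $\mathcal{K}(G,v)$ is, by construction, the set of independent sets $s\subseteq V(G)\setminus st(v)$ satisfying $s\cap W_1=\emptyset$ or $s\cap W_2=\emptyset$ (this set is clearly downward closed, so it is a genuine subcomplex of $I_{G-st(v)}$, as also follows from the proof of Theorem~\ref{globaltheo}). On the other hand, the graph $H$ has vertex set $V(G)\setminus st(v)$, edge set the edges of $G-st(v)$ together with all pairs $\{x,y\}$ with $x\in W_1$ and $y\in W_2$, and loop-vertices exactly the elements of $W_1\cap W_2$. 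By Definition~\ref{defindep}, a set $s\subseteq V(H)$ is a simplex of $I_H$ precisely when it avoids every loop-vertex and contains no edge of $H$; unwinding this, $s\in I_H$ iff $s$ is independent in $G-st(v)$ and there is no pair $x\in s\cap W_1$, $y\in s\cap W_2$ (allowing $x=y$, which is exactly the loop-vertex case). Since ``no such pair exists'' is literally ``$s\cap W_1=\emptyset$ or $s\cap W_2=\emptyset$'', the complexes $\mathcal{K}(G,v)$ and $I_H$ have the same simplices; flagness of $\mathcal{K}(G,v)$ is then automatic, as $I_H$ is a flag complex for every $H$ by the remark after Definition~\ref{defindep}.

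For (2) and (3) I would translate the passage from $G$ to $H$ back into a graph operation and then apply Theorem~\ref{globaltheo}, which yields $I_G\sim_h\Susp \mathcal{K}(G,v)\sim_h\Susp I_H$. If $|W_1|=|W_2|=1$, say $W_1=\{w_1\}$ and $W_2=\{w_2\}$, then the degree hypotheses force $w_1,v_1,v,v_2,w_2$ to be a path whose three interior vertices have degree $2$; equivalently, $G$ is obtained from $H$ by subdividing the edge $w_1w_2$ into four intervals, and $I_G\sim_h\Susp I_H$ is exactly Theorem~\ref{csorba} (the case $w_1=w_2$ being the mild generalization in which the subdivided edge is a loop of $H$, i.e. a $4$-cycle in $G$). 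If only $|W_1|=1$ is assumed, then $w_1,v_1,v,v_2$ is a path of length three in $G$ whose two middle vertices have degree $2$; deleting $v_1,v,v_2$ and joining $w_1$ to every vertex of $W_2=lk_G(v_2)-v$ is precisely the contraction of this path onto $w_1$, which inherits all former neighbours of $v_2$ (producing multiple edges and, if $w_1$ was already adjacent to $v_2$, a loop), so again Theorem~\ref{globaltheo} and (1) give $I_G\sim_h\Susp I_H$. The footnote on $G_3$ then follows by applying (3) repeatedly along a spanning tree of $G$, which collapses $G_3$ to a wedge of $e-n+1$ triangles.

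The only genuinely delicate point is the loop/multiple-edge bookkeeping: one must check that the extended definition of the independence complex (Definition~\ref{defindep}(2)) applied to $H$ --- which may carry loops whenever $W_1\cap W_2\neq\emptyset$, and multiple edges in the setting of (3) --- matches the deletion defining $\mathcal{K}(G,v)$ on the nose, including the degenerate case $x=y$. Once this is handled, everything else is a routine unwinding of definitions together with the already-proved Theorem~\ref{globaltheo}.
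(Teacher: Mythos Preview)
Your proposal is correct and is precisely the unpacking the paper has in mind: the paper states Corollary~\ref{lastcor} without a separate proof, treating it as an immediate consequence of Theorem~\ref{globaltheo}, and your argument supplies exactly that verification (identify the simplices of $\mathcal{K}(G,v)$ with those of $I_H$ via the condition $s\cap W_1=\emptyset$ or $s\cap W_2=\emptyset$, then read off (2) and (3) as the special cases $|W_1|=|W_2|=1$ and $|W_1|=1$). Your handling of the loop/multiple-edge bookkeeping through Definition~\ref{defindep}(2) is the right way to make the degenerate cases precise.
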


\begin{figure}
\centering
\includegraphics[width = 10.5cm]{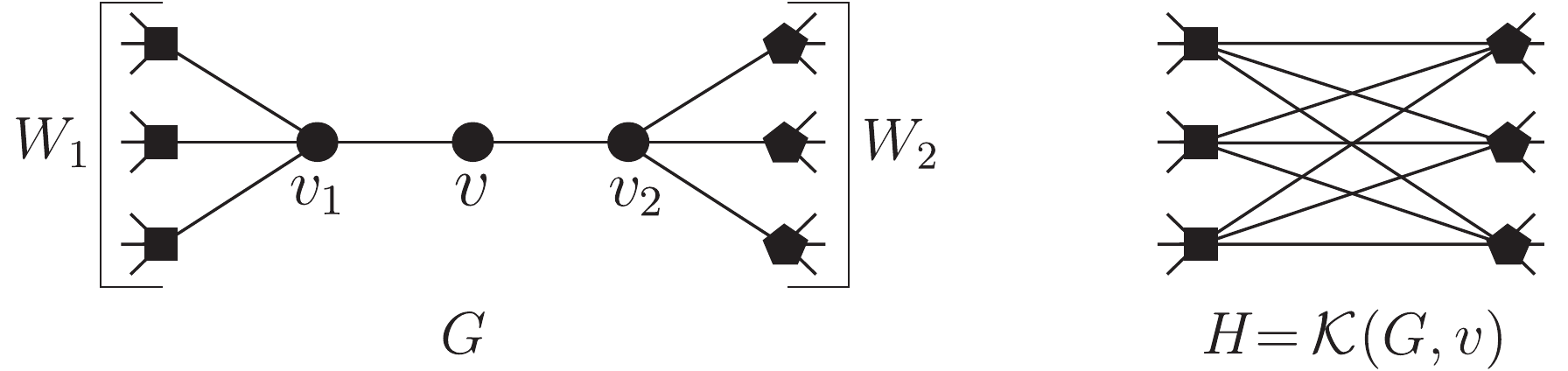}
\caption{\small{The graph $G$ and $\mathcal{K}(G,v)$ when $v$ has two adjacent vertices, as explained in Corollary \ref{lastcor}(1).}}
\label{figpic}
\end{figure}

\begin{figure}
\centering
\includegraphics[width = 9.2cm]{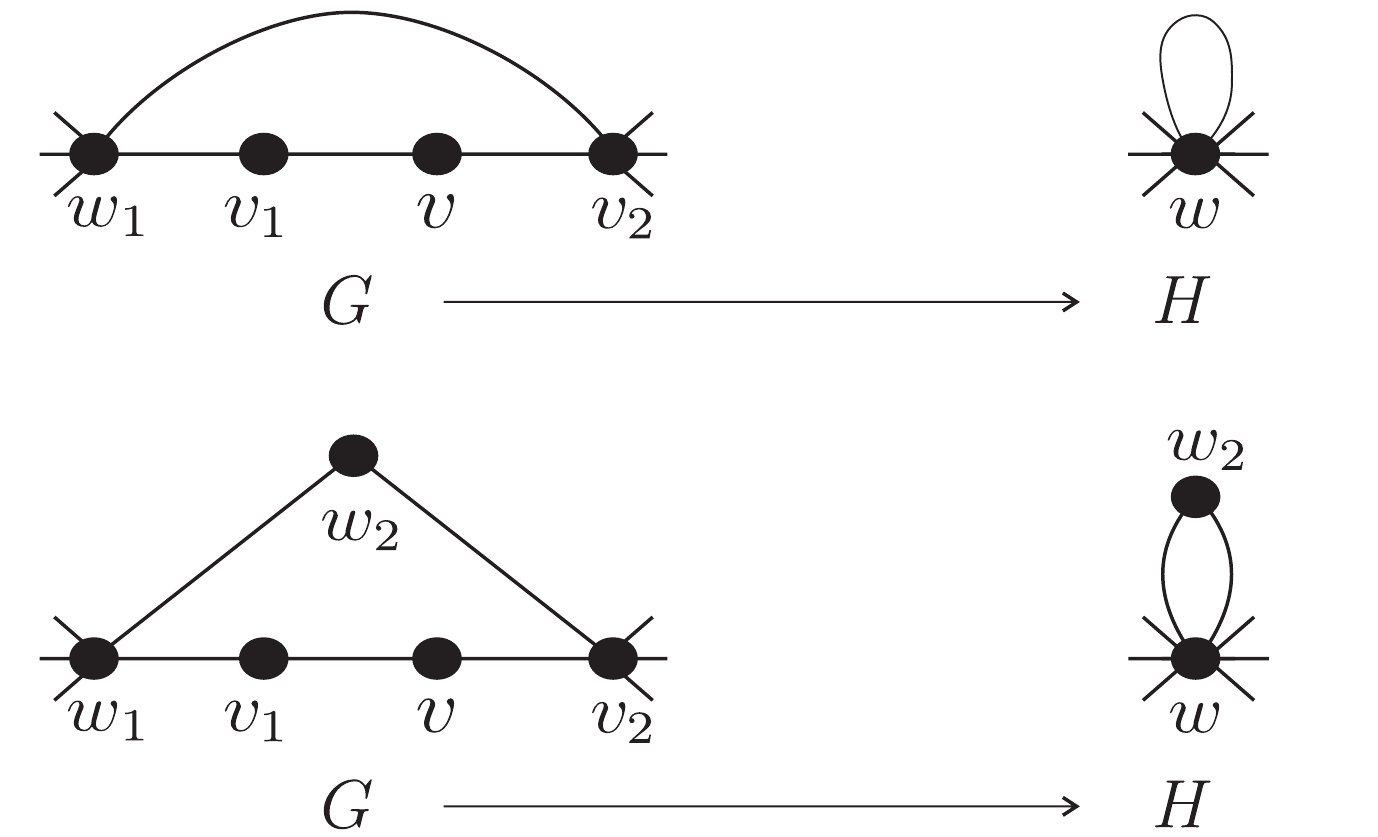}
\caption{\small{Creating loops and multiple edges using Generalized Csorba Lemma in Corollary \ref{lastcor}(3).}}
\label{creatingloop}
\end{figure}

\begin{remark}\label{rem1}
There are some natural situations in which conditions of Theorem \ref{globaltheo} are not satisfied. The simplest such example is illustrated in Figure \ref{remark1}. However, as $v_1$ dominates $v$, this case can be reduced by using Lemma \ref{dominationlemma}, yielding $I_G \sim_h I_{G-v_1} \vee \Susp I_{st(v_1)}$.
\end{remark}

\begin{figure}
\centering
\includegraphics[width = 3.2cm]{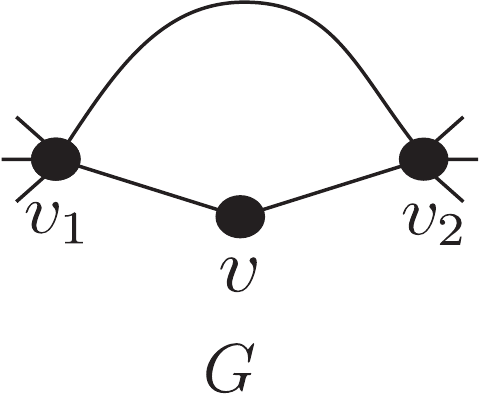}
\caption{\small{The simplest case not covered by Theorem \ref{globaltheo}.}}
\label{remark1}
\end{figure}

In Theorem \ref{globaltheo} even if one starts with a bipartite circle graph, the simplicial complex $\mathcal{K}(G,v)$ is not necessarily a flag complex. Even if one assumes that the degree of vertex $v$ is 2 (as in Corollary \ref{lastcor}) it is not clear whether the resulting simplicial complex is the independence complex of a circle graph. However, in some special situations described in Proposition \ref{propdibu} we are able to conclude so. In particular, if $H$ is obtained from $G$ by collapsing a path of length 3 as in Corollary~\ref{lastcor}(3), then if $G$ is a bipartite circle graph we conclude that $H$ is a circle graph.

\begin{proposition}\label{propdibu} 
Let $G$ be a loopless bipartite circle graph containing a degree-2 vertex $v$ with adjacent vertices $v_1$ and $v_2$. For $i=1,2$, write $W_i^{left}$, ($W_i^{right}$) for the chords intersecting $v_i$ at the left (right) side of the chord representing $v$ (see Figure \ref{wvacios}(1)). Let $H$ be the graph obtained from $G$ by applying the construction in Corollary \ref{lastcor}(1) and removing the vertices with loops in case they are created.
\begin{enumerate}
\item[(1)] If one of the sets of chords $W_1^{left}$, $W_1^{right}$, $W_2^{left}$ or $W_2^{right}$ is empty, then $H$ is a circle graph.
\item[(2)] In particular, if $G$ contains a path of length 3, then the graph $H$ obtained after its contraction is a circle graph.
\end{enumerate}
\end{proposition}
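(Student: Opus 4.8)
The plan is to reduce, via Corollary \ref{lastcor}(1), to showing that a certain graph $H$ is a circle graph, and then to produce a chord diagram for it by hand. Recall from Corollary \ref{lastcor}(1) that $\mathcal{K}(G,v)=I_H$, where $H$ has vertex set $U:=V(G)\setminus\{v,v_1,v_2\}$, has for edges those of the induced subgraph $G[U]$ together with all edges joining $W_1:=\lk_G(v_1)\cap U$ to $W_2:=\lk_G(v_2)\cap U$, and where the vertices of $W_1\cap W_2$ (the ones acquiring loops) are deleted. So the whole content of the statement is that this $H$ is a circle graph.

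First I would fix a chord diagram $\mathcal{C}_G$ of $G$ (as $G$ is bipartite it could be taken of Lando type, though this is not needed) and draw the chord $v$; its two endpoints split the circle into a left arc and a right arc, and since $v$ crosses only $v_1$ and $v_2$, every other chord has both endpoints on a single arc. The four sets $W_i^{left},W_i^{right}$ record the side on which each neighbour of $v_i$ sits. Using the two obvious symmetries --- relabelling $v_1\leftrightarrow v_2$ (which $H$ does not notice, the $W_1$--$W_2$ join being symmetric) and reflecting the diagram (left$\leftrightarrow$right) --- I may assume the empty set among the four is $W_2^{right}$: every chord meeting $v_2$, apart from $v$, lies on the left side of $v$. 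Then I would delete from the diagram the chords of $W_1\cap W_2$ (allowed, as those vertices are absent from $H$), after which $W_1^{left},W_1^{right},W_2$ are pairwise disjoint. The point of the emptiness hypothesis is that it now forces a rigid ``zone'' structure: on the left arc the endpoints of $v_1,v_2,W_1^{left},W_2$ occur in a prescribed cyclic order, so that each chord of $W_1^{left}$ joins a first zone to a second, each chord of $W_2$ joins the second zone to a third, and every chord disjoint from $W_1\cup W_2$ lies inside one zone; symmetrically $W_1^{right}$ occupies two zones on the right arc. The subcase $v_1\sim v_2$ gives a slightly different but equally rigid pattern, handled in parallel.

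Given this normal form, I would construct the chord diagram of $H$ as in Figure \ref{wvacios}: erase $v,v_1,v_2$ and reorganize the zones --- moving the chords of $W_1^{right}$ so that, like those of $W_1^{left}$, they thread through the chords of $W_2$ --- in such a way that every chord of $W_1^{left}\cup W_1^{right}$ ends up crossing every chord of $W_2$, with no other crossing gained or lost; the one-sidedness coming from $W_2^{right}=\emptyset$ is exactly what leaves room to do this. The step I expect to be the real obstacle is the final bookkeeping: one must check, type by type --- crossings internal to $W_1^{left}$, to $W_2$, to $W_1^{right}$; crossings of a $W_i$-chord against a chord lying inside a single zone; crossings of two such zone-internal chords; and the newly created $W_1$-against-$W_2$ crossings --- that the rerouted diagram realizes $H$ exactly. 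Controlling the zone-internal chords (those not in $\{v,v_1,v_2\}\cup W_1\cup W_2$) is what dictates the precise placement in the figure, since a careless reordering of endpoints would create or destroy some of these crossings; both reductions (deleting $W_1\cap W_2$ and the emptiness hypothesis) are used essentially here, consistently with the fact that in general $\mathcal{K}(G,v)$ need not be the independence complex of any circle graph.

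Part (2) is then immediate from part (1): a path of length $3$ as in Corollary \ref{lastcor}(3), say $w_1-v_1-v-v_2$ with $v_1$ and $v$ of degree $2$, exhibits a degree-$2$ vertex $v$ for which $W_1=\lk_G(v_1)\setminus\{v\}=\{w_1\}$ is a single chord; a single chord lies entirely on one side of $v$, so $W_1^{left}$ or $W_1^{right}$ is empty and part (1) applies. The resulting $H$ is precisely the contraction of that path, so contracting such a path sends bipartite circle graphs to circle graphs.
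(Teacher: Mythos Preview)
Your proposal is correct and follows essentially the same route as the paper: reduce by symmetry to $W_2^{right}=\emptyset$, delete $v,v_1,v_2$, and reroute the chords so that every $W_1$-chord crosses every $W_2$-chord while all other incidences are preserved. The paper's proof is little more than a pointer to Figure~\ref{wvacios} together with the remark that the remaining chords ``can be arranged properly''; your zone decomposition and type-by-type bookkeeping simply make explicit what the figure is meant to convey, and your derivation of part~(2) from $|W_1|=1$ matches the paper's ``in particular''.
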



\begin{proof} The proof is essentially given in Figure \ref{wvacios}. As usual, we keep the name of a vertex for its associated chord. In the chord diagram associated to $G$, draw $v$ as a vertical chord, and $v_1$ and $v_2$ as two chords on the top and on the bottom. Assume that $W_2^{right} = \emptyset$. To construct the bipartite chord diagram associated to $H$, just remove the chords $v$, $v_1$ and $v_2$, and reconnect properly the chords of $W_1^{left}$ and $W_2^{left}$ as shown in Figure \ref{wvacios}(2) to reach a chord diagram of $H$. It is not hard to see that the chords intersecting neither $v_1$ nor $v_2$ can be arranged properly so they keep their adjacencies in $H$.

\begin{figure}
\centering
\includegraphics[width = 12cm]{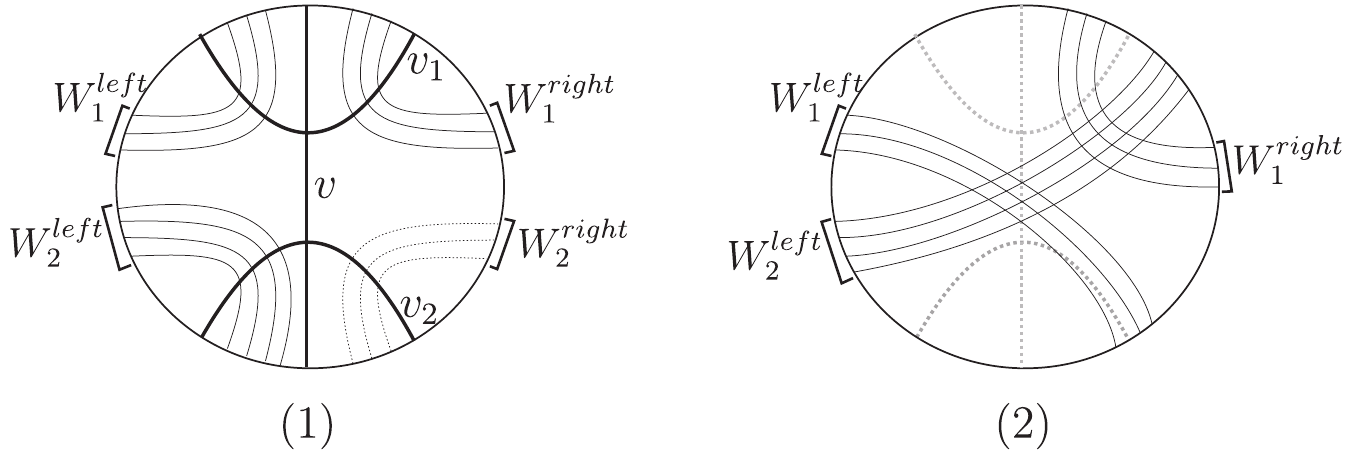}
\caption{\small{Proposition \ref{propdibu} is illustrated by showing a scheme of the chord diagrams associated to $G$ and $H$.}}
\label{wvacios}
\end{figure}
\end{proof}

\section{Applications to Knot Theory}\label{sec7}

\subsection{Introduction to (extreme) Khovanov homology} \label{seckhov}

Khovanov homology is a powerful link invariant introduced by Mikhail Khovanov at the end of last century \cite{Khovanov}. More precisely, given a link $L$, he constructed a collection of bigraded groups $H^{i,j}(L)$ arising as the homology groups of certain chain complexes, in such a way that
$$
J(L)(q) = \sum_{i,j} q^j (-1)^i rk(H^{i,j}(L)),
$$
where $J(L)$ is the Jones polynomial of $L$. It is known that Khovanov homology detects the unknot \cite{unknot}.

We review the definition of Khovanov homology by following the approach given by Viro \cite{Vir} and summarized in \cite{GMS}.

Given an oriented diagram $D$ of a link $L$, write $w = w(D) = p-n$ for its writhe, with $p$ and $n$ its number of positive and negative crossings (see Figure \ref{marcadores} for sign convention). A state $s$ is an assignation of a label, $A$ or $B$, to each crossing of $D$. Write $\sigma = \sigma(s) = a(s) - b(s)$, with $a(s)$ ($b(s)$) being the number of $A$ ($B$) labels in $s$. For each $s$, smooth each crossing of $D$ according to its label following Figure \ref{marcadores}. Now, enhance each of the $|s|$ circles with a sign $\epsilon = \pm 1$. We keep the letter $s$ for the enhanced state to avoid cumbersome notation. Write $\tau = \tau (s) = \sum_{i=1}^{|s|} \epsilon_i$. Then, the indices associated to the state $s$ are
$$
i = i(s)= \frac{w - \sigma}{2},     \quad \quad      j = j(s) = w + i + \tau.
$$

\begin{figure}
\centering
\includegraphics[width = 8cm]{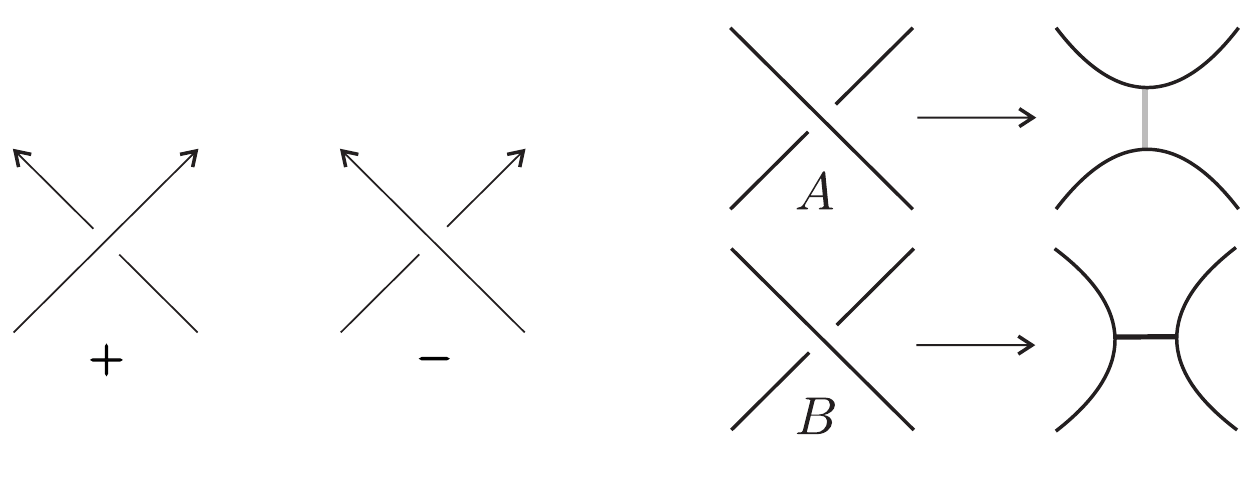}
\caption{\small{The sign convention and the smoothing of a crossing according to its $A$ or $B$-label are shown. $A$-chords ($B$-chords) are represented by light (dark) segments.}}
\label{marcadores}
\end{figure}

The enhanced state $t$ is adjacent to $s$ if they are identical except in the neighborhood of a crossing $x$, where they differ as shown in Figure~\ref{posibilidades}. In particular, this implies that $i(t) = i(s) + 1$ and $j(t) = j(s)$.\\

\begin{figure}[h]
\centering
\includegraphics[width = 10cm]{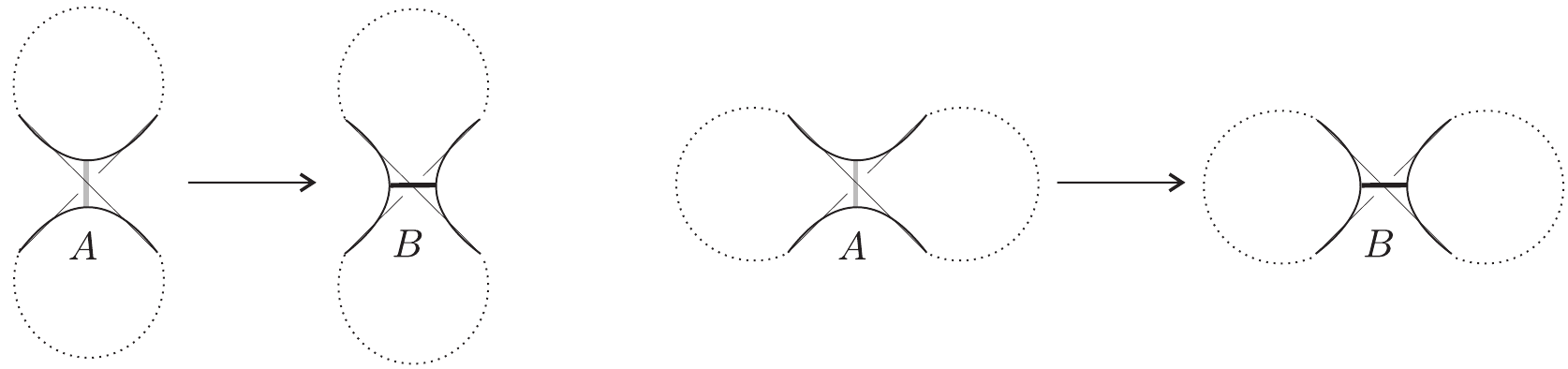}
\caption{\small{All possible enhancements when melting two circles are: $(++ \rightarrow +), \, (+- \rightarrow -), \, (-+ \rightarrow -)$. The possibilities for the splitting are: $(+ \rightarrow +-)$, \, $(+ \rightarrow -+)$ or $(- \rightarrow --)$.}}
\label{posibilidades}
\end{figure}

Let $C^{i,j}(D)$ be the free $\mathbb{Z}$-module generated by the set of enhanced states $s$ of $D$ with $i(s) = i$ and $j(s) = j$. Order the crossings in $D$. Now fix an integer $j$ and consider the ascendant complex
$$
\ldots \quad \rightarrow \quad C^{i,j}(D) \quad \stackrel{d_i}{\longrightarrow} \quad C^{i+1,j}(D) \quad \longrightarrow \quad \ldots
$$
with differential $d_i(s) = \sum (s:t) t$, where $(s:t) = 0$ if $t$ is not adjacent to $s$ and otherwise $(s:t) = (-1)^k$, with $k$ being the number of $B$-labeled crossings coming after the change crossing $x$. It turns out that $d_{i+1} \circ d_i = 0$ and the corresponding homology groups
$$
H^{i,j}(D)=\frac{\textnormal{ker} (d_i)}{\textnormal{im}(d_{i-1})}
$$
are independent of the diagram $D$ representing the link $L$ and the ordering of its crossings, that is, these groups are link invariants. They are the Khovanov homology groups $H^{i,j}(L)$ of $L$ (\cite{Khovanov, BarNatan}).

Let $j_{\max} = j_{\max}(D) = \max \{ j(s)\ | \ s \textnormal{ is an enhanced state of } D\}$. We will refer to the complex $\{ C^{i,j_{\max}}(D), d_i\}$ as the extreme Khovanov complex, and to the corresponding groups $H^{i,j_{\max}}(D)$ as the (potential) extreme Khovanov homology groups.

We remark that the integer $j_{\max}(D)$ depends on the diagram, and may differ for two different diagrams representing the same link. Given an oriented link diagram $L$, we write $\overline{j}(L)$ for the highest value of $j$ such that $H^{i,j}(L)$ is non-trivial for at least one value of $i$. This value does not depend on the chosen diagram. We will call the corresponding groups $H^{i,\overline{j}}(L)$ real-extreme Khovanov homology groups. Note that for every diagram $D$ representing a link $L$, $j_{\max}(D) \geq \overline{j}(L)$.

There are analogous definitions for $j_{\min}(D)$ and $\underline{j}(L)$. Actually, $j_{\min}(D) \leq \underline{j}(L)$ and if $D^*$ represents the mirror image of a link diagram $D$, then $j_{\max}(D) = - j_{\min}(D^*)$. From now on we work with $j_{\max}(D)$ and $\overline{j}(L)$.

\begin{question} {\rm{\cite{GMS}}} \label{question}
Does every oriented link $L$ have a diagram $D$ whose associated $j_{\max}(D) = \overline{j}(L)$?
\end{question}

In the next subsection we show that Conjecture \ref{conj} leads to a negative answer to Question \label{question}.

\subsection{Extreme Khovanov homology as the independence complex of bipartite circle graphs}\label{subseclando}

Given a state $s$ write $sD$ for the set of circles and chords obtained when smoothing the crossings of $D$ according to the labels given by $s$.

\begin{definition} \label{deflando}
Let $D$ be a link diagram and $s_B$ the state assigning a $B$-label to every crossing of $D$. The Lando graph $G_D$ associated to $D$ is the (bipartite) circle graph associated to $s_BD$\footnote{In \cite{GMS} the Lando graph is defined as the circle graph associated to the state $s_A$, since that paper deals with the minimal-extreme Khovanov homology.}.
\end{definition}

An $A$ or $B$-chord is said to be admissible in $s$ if it has both endpoints in the same circle of $sD$. Note that $G_D$ can be thought as the disjoint union of the circle graphs arising from each of the chord diagrams in $s_BD$ after removing non-admissible chords, as they do not play any role in the construction of $G_D$. Figure~\ref{ejemplolando} exhibits a diagram $D$, the corresponding $s_BD$ and its Lando graph~$G_D$.

\begin{figure}
\centering
\includegraphics[width = 9.5cm]{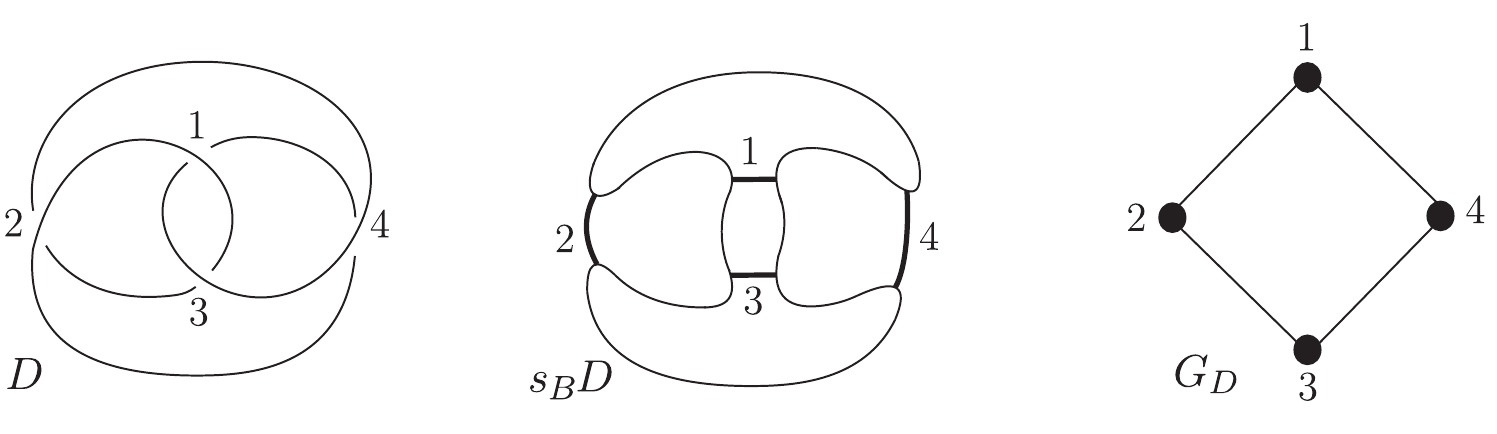}
\caption{\small{A diagram $D$ representing the trefoil knot, $s_BD$ and the corresponding Lando graph $G_D$ are shown.}}
\label{ejemplolando}
\end{figure}

Note that Lando graphs are bipartite. As not every graph is a circle graph, not every graph is a Lando graph. Recall that a graph $G$ is the Lando graph associated to some link diagram if and only if $G$ is a bipartite circle graph.

Given a chord diagram, it is not hard to reconstruct one of its associated link diagrams. One just have to replace each $B$-chord with a crossing, reversing the arrow in Figure \ref{marcadores}. However, in general given a Lando graph it is not easy to reconstruct one of its associated chord diagrams, hence link diagrams.

Let $I_{G_D}$ be the independence simplicial complex of the graph $G_D$.

\begin{definition}
Let $C_i(I_{G_D})$ be the free abelian group generated by the simplexes of $I_{G_D}$ of dimension $i$. The chain complex $\{C_i(I_{G_D}), \partial_i \}$ where $\partial_i$ is the standard differential, is called the Lando descendant complex of the link diagram $D$. It is assumed that the vertices of $G_D$ inherit the predetermined order given by the crossings of $D$.

The reduced homology groups of this chain complex are called the Lando homology groups of $D$
$$
\tilde{H}_{i}(I_{G_D})=\frac{\textnormal{ker} (\partial_i)}{\textnormal{im}(\partial_{i+1})}.
$$
\end{definition}

Following \cite{GMS}, the extreme Khovanov complex of a link diagram can be expressed in terms of the independence complex of its associated Lando graph as follows:

\begin{theorem} {\rm{\cite{GMS}}}  \label{KeyTheorem}
Let $L$ be an oriented link represented by a diagram $D$ with $p$ positive crossings. Let $G_D$ be its associated Lando graph and let $j=j_{\max}(D)$. Then the Lando descendant complex $\{C_i(I_{G_D}), \partial_i\}$ is isomorphic to the extreme Khovanov complex $\{C^{i,j}(D), d_i\}$. In particular
$$
H^{i,j}(D) \approx \tilde{H}_{p-i-1}(I_{G_D}).
$$
\end{theorem}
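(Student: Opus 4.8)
The plan is to establish an explicit isomorphism of chain complexes between the Lando descendant complex $\{C_i(I_{G_D}), \partial_i\}$ and the extreme Khovanov complex $\{C^{i,j_{\max}}(D), d_i\}$, then read off the homology statement. First I would identify which enhanced states $s$ of $D$ contribute in degree $j = j_{\max}(D)$. Since $j(s) = w + i(s) + \tau(s)$ and $i(s) = (w-\sigma(s))/2$, maximizing $j$ forces both $\tau(s)$ to be maximal (all circles labelled $+$, so $\tau = |s|$) and a constraint relating $|s|$ to $\sigma(s)$; a quick computation shows that the all-$B$ state $s_B$ together with states obtained from it by switching some $B$-labels to $A$-labels at crossings whose chords are ``admissible'' (both endpoints on the same circle, and switching them keeps us at the top $j$-level) are exactly the ones that survive. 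The key combinatorial point is that switching a set $T$ of such crossings from $B$ to $A$ keeps $j(s) = j_{\max}$ precisely when the chords in $T$ are pairwise non-crossing in the chord diagram $s_B D$ — equivalently, when $T$ is an independent set in the Lando graph $G_D$, because each admissible $B$-chord, when switched, splits a circle into two, raising $|s|$ by one, but two crossing chords cannot both be switched without one of them becoming non-admissible. This gives a bijection between enhanced states contributing to $C^{i,j_{\max}}(D)$ and independent sets (simplexes) of $I_{G_D}$.

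Next I would match up the gradings. Under the bijection above, if an independent set (simplex) $\sigma$ has $d+1$ vertices, the corresponding state has $a(s) = d+1$ crossings relabelled $A$, so $\sigma(s) = -(\#\text{crossings}) + 2(d+1)$, and hence $i(s) = (w - \sigma(s))/2 = $ a constant minus $(d+1)$; writing the constant in terms of $p$ (via $w = p - n$ and $\#\text{crossings} = p + n$) yields $i(s) = p - 1 - d$. Thus the Khovanov degree $i$ corresponds to simplicial dimension $d = p - i - 1$, which is the index shift in the statement. Then I would check that the differentials agree: the Khovanov differential $d_i$ takes a state to a signed sum over single crossing switches $A \to B$ that preserve $j$, which corresponds exactly to deleting one vertex from the simplex $\sigma$ (since a single $A$-to-$B$ switch removes one element from $T$, sending $i$ to $i+1$ and keeping us on the top $j$-line), i.e. to the simplicial boundary $\partial$. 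The sign $(s:t) = (-1)^k$ with $k$ the number of $B$-labelled crossings after the switched crossing must be reconciled with the standard simplicial sign $(-1)^{\text{position}}$; this is a routine check once one fixes the induced ordering of vertices of $G_D$ from the ordering of crossings of $D$.

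Finally, from the chain isomorphism $C^{i, j_{\max}}(D) \cong C_{p-i-1}(I_{G_D})$ commuting with differentials, I would pass to (co)homology. The extreme Khovanov complex is a cochain complex with increasing $i$, while the Lando descendant complex is a chain complex with decreasing dimension; under the identification $d = p - i - 1$ the arrow directions agree, and one obtains $H^{i, j_{\max}}(D) \cong H_{p-i-1}(I_{G_D})$. To get the \emph{reduced} homology $\tilde H_{p-i-1}$ as in the statement one accounts for the empty simplex (dimension $-1$): the convention that $I_G$ contains $\emptyset$ means the chain complex is the augmented one, so its homology is reduced homology. Since Theorem~\ref{KeyTheorem} is quoted from \cite{GMS}, I would cite that reference for the full verification and only sketch the above.

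I expect the main obstacle to be the sign bookkeeping: verifying that the Khovanov differential sign $(-1)^k$ (counting $B$-labelled crossings after the active crossing) matches the simplicial boundary sign under the chosen vertex ordering requires care, because the ``ambient'' crossings that are labelled $B$ in $s_B D$ but lie outside the independent set also need to be tracked, and one must confirm the non-admissible/irrelevant chords genuinely do not interfere. The bijection between top-$j$ enhanced states and independent sets is conceptually the heart of the matter, but it is the sign compatibility that makes this an isomorphism of \emph{complexes} rather than merely a dimension-by-dimension match.
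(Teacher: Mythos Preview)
The paper does not supply a proof of this theorem: it is stated as a quotation from \cite{GMS} and used as a black box. Your proposal therefore goes well beyond what the paper itself does, and you correctly flag at the end that the full verification lives in \cite{GMS}.

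As a sketch of the argument from \cite{GMS}, your outline is accurate. The identification of the generators of $C^{i,j_{\max}}(D)$ with independent sets in $G_D$ (via the observation that switching an admissible $B$-chord to $A$ splits a circle and preserves $j$, while two crossing chords cannot both be switched without one becoming non-admissible), the grading computation $d = p - i - 1$, and the matching of the Khovanov differential with the simplicial boundary are exactly the ingredients. Your caveat about the sign bookkeeping is also well placed: that is the one point where a careful argument is needed, and it is handled in \cite{GMS} by using the ordering on crossings to induce the vertex ordering on $G_D$, after which the two sign conventions are seen to differ by a global sign depending only on the degree. Since the present paper treats the result as a citation, a one-line reference to \cite{GMS} is all that is required here; your sketch would be appropriate as an expository aside but is not part of the paper's own argument.
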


Note that while computing full Khovanov homology (or even Jones polynomial) is NP-hard \cite{complexity}, computing the homology of the independence complex of a graph (so potential extreme Khovanov homology) has polynomial-time complexity.

Theorem \ref{KeyTheorem} allows us to translate some open problems related to Khovanov homology into the language of circle graphs and homotopy theory. In particular, since there exist links whose real-extreme Khovanov complex is not torsion-free (the torus link $T=T(5,6)$ is such an example), Conjecture~\ref{conj} gives a negative answer to Question \ref{question}.

In the next subsections we present some consequences of our work on circle graphs related with the extreme Khovanov homology of some families of links.

\subsection{Torus knots}

\begin{lemma} Let $D$ be a link diagram.
\begin{enumerate}
\item [(1)]  If $s_BD$ contains a set of five chords depicted as in Figure \ref{csorbabraid}(1), then $D$ contains the 3-braid tangle $\sigma_1 \sigma_2 \sigma_1 \sigma_2 \sigma_1$ and $G_D$ contains a path of length 4. (See Figure \ref{torus} for braid convention.)
\item [(2)] More generally, if $s_BD$ contains a piece of $n$ chords interlaced as in Figure \ref{csorbabraid}(2), then $D$ contains the 3-braid tangle $(\sigma_1 \sigma_2)^{\frac{n-1}{2}} \sigma_1$ when $n$ is odd and $(\sigma_1 \sigma_2)^{\frac{n}{2}}$ when $n$ is even. In both cases $G_D$ contains a path of length $n-1$.
\end{enumerate}
\end{lemma}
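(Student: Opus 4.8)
The plan is to treat this as a direct picture-chasing argument. The passage from $D$ to $s_BD$ is carried out by $B$-smoothing each crossing independently, and this operation is locally invertible: un-smoothing a chord of $s_BD$ recovers a crossing of $D$ via the reverse of the local move of Figure \ref{marcadores}. Hence, if $s_BD$ contains the chord configuration of Figure \ref{csorbabraid}, then $D$ must contain whatever tangle is obtained by un-smoothing those chords in place. So it suffices to (i) identify that tangle and (ii) compute the subgraph of $G_D$ induced by those chords. Since part (1) is exactly the case $n=5$ of part (2), I would prove only (2).

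For (i): the $n$ chords of Figure \ref{csorbabraid}(2) alternate between two types, each type straddling one fixed pair of adjacent strand positions, because consecutive turnbacks in the figure share exactly one strand — which forces a third strand to be present and forbids a fourth. Reading the chords in order, the $k$-th chord therefore un-smooths to a crossing of the first two strands when $k$ is odd and of the last two strands when $k$ is even (or vice versa), i.e.\ to $\sigma_1$, $\sigma_2$, $\sigma_1$, $\sigma_2,\dots$, a 3-braid word of length $n$. Collecting the letters gives $(\sigma_1\sigma_2)^{\frac{n-1}{2}}\sigma_1$ when $n$ is odd and $(\sigma_1\sigma_2)^{\frac{n}{2}}$ when $n$ is even; with the orientation drawn in Figure \ref{csorbabraid} and the sign convention of Figure \ref{marcadores} the recovered crossings are positive, so one gets the generators and not their inverses. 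For $n=5$ this is $\sigma_1\sigma_2\sigma_1\sigma_2\sigma_1$, proving the braid claim of (1).

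For (ii): I would show that the chords $c_1,\dots,c_n$ span an induced path $L_{n-1}$ in $G_D$. Two consecutive chords $c_k$ and $c_{k+1}$ are the turnbacks of $\sigma_1$ and $\sigma_2$ lying on a common circle of $s_BD$ and sharing the middle strand, so their four endpoints alternate around that circle and the chords intersect, contributing an edge of $G_D$. Two chords $c_k,c_\ell$ with $|k-\ell|\ge 2$ are either parallel turnbacks at the same pair of strands or are separated along the circle by an intervening turnback, so their endpoints do not alternate and they do not intersect. As the depicted chords are admissible (both endpoints on one circle of $s_BD$), they are vertices of $G_D$, and the induced subgraph is precisely $L_{n-1}$; in particular $G_D$ contains a path of length $n-1$. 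An equivalent route is induction on $n$: adjoining one more generator appends one chord that crosses only the previous one, lengthening the braid word by one letter and the path by one edge.

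The \emph{main obstacle} is purely the usual one for figure-based proofs, namely making the pictures airtight. Concretely one must: (a) fix a combinatorial description of ``$n$ chords interlaced as in Figure \ref{csorbabraid}(2)'' as a cyclic pattern of endpoints; (b) verify admissibility of these chords inside the ambient diagram $D$, so that they genuinely lie in the vertex set of $G_D$; and (c) bookkeep crossing signs and strand orientations carefully enough that the reconstructed word comes out with positive generators. Once these conventions are nailed down, both (i) and (ii) are immediate inspections. It is worth noting that this lemma is the braid-theoretic shadow of Csorba's edge-subdivision theorem (Theorem \ref{csorba}): inserting the five-chord block of Figure \ref{csorbabraid}(1) into a chord diagram is the counterpart, at the level of link diagrams, of subdividing an edge into four intervals.
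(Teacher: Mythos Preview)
Your proposal is correct and is essentially a careful unpacking of the paper's own proof, which consists of the single sentence ``The proof is illustrated in Figure \ref{csorbabraid}.'' You have simply made explicit the local inversion of $B$-smoothing and the chord-intersection bookkeeping that the paper leaves entirely to the picture.
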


\begin{figure}
\centering
\includegraphics[width = 11.2cm]{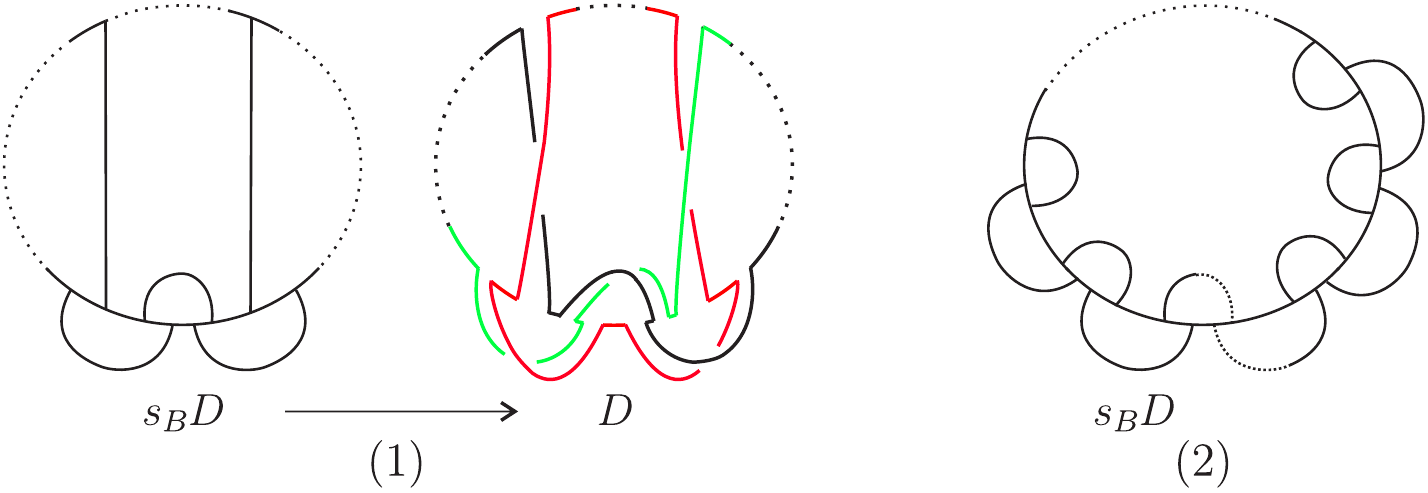}
\caption{\small{The piece depicted in the chord diagram $s_BD$ in (1) can be thought as the tangle $\sigma_1 \sigma_2 \sigma_1\sigma_2\sigma_1$ in $D$. The reconstruction of the case (2) is analogous.}}
\label{csorbabraid}
\end{figure}

\begin{proof}
The proof is illustrated in Figure \ref{csorbabraid}.
\end{proof}

\begin{figure}
\centering
\includegraphics[width = 9.2cm]{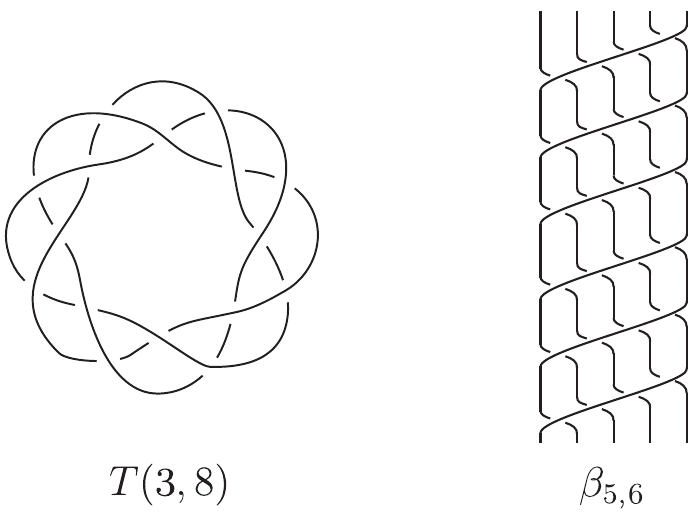}
\caption{\small{The torus link $T(3,8)$ and the braid $\beta_{5,6} = (\sigma_4 \sigma_3 \sigma_2 \sigma_1)^6$, whose closure represent the classical diagram of $T(5,6)$.}}
\label{torus}
\end{figure}

Given two positive integers $p$ and $q$, write $T(p,q)$ for the positive torus link of type $(p,q)$. Any torus link $T(p,q)$ can be represented as the closed braid on $p$ strands $\widehat{\beta}_{p,q} = ( \sigma_{p-1} \sigma_{p-2}\ldots \sigma_2 \sigma_1 )^q$. See Figure \ref{torus} for such examples and braid conventions.

\begin{corollary}\label{ciclotoro}
The Lando graph associated to the classical diagram of the torus link $T(3,q)$ (that is, the diagram depicted as the closed braid $\beta_{3,q}$) is $C_{2q}$, the cycle graph of order $2q$. See Figure \ref{torus3} for such an example.
\end{corollary}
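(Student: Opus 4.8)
The plan is to read $G_D$ straight off the all-$B$ state of the standard closed-braid diagram $D=\widehat{\beta}_{3,q}$, using the local picture of the preceding Lemma (Figure~\ref{csorbabraid}) run backwards. First I would record that $\beta_{3,q}=(\sigma_2\sigma_1)^q$ is a $3$-braid with exactly $2q$ crossings $c_1,\dots,c_{2q}$, alternating down the braid between the two generators; after the strand relabeling $\sigma_1\leftrightarrow\sigma_2$ this is the closure of $(\sigma_1\sigma_2)^q$, i.e.\ the braid tangle appearing in case $n=2q$ of that Lemma.

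Next I would $B$-smooth every crossing. By the Lemma, the smoothing of the braid part of $D$ produces a strip of $2q$ mutually interlaced chords $\gamma_1,\dots,\gamma_{2q}$, where $\gamma_i$ is the chord recording $c_i$, the pair $\gamma_i,\gamma_{i+1}$ cross, and $\gamma_i,\gamma_j$ are disjoint whenever $|i-j|\ge 2$; inside the tangle these chords span the path $L_{2q-1}$. Closing the braid then glues the top ends of the strip to the bottom ends, with two effects: every arc of $s_BD$ closes into a circle and each $\gamma_i$ ends up with both endpoints on one and the same circle, so each $\gamma_i$ is admissible; and the two ends of the chain $\gamma_1,\dots,\gamma_{2q}$ are identified, so that $\gamma_{2q}$ and $\gamma_1$ now interlace as well. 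Since $D$ has no further crossings, these $2q$ admissible chords are all the chords of $G_D$, their intersection graph is precisely a $2q$-gon, and any remaining circles of $s_BD$ carry no chords and are therefore irrelevant; hence $G_D=C_{2q}$.

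The one point requiring care — and the main obstacle — is the bookkeeping of the closure step: one must verify that the two closure arcs can be routed so as not to separate the endpoints of any $\gamma_i$ (so that no spurious chord crossings are created and no chord turns non-admissible) and that they indeed splice the free end of $\gamma_{2q}$ to the free end of $\gamma_1$ cyclically. I would settle this with an explicit drawing of $s_BD$ for $\widehat{\beta}_{3,q}$ such as the one in Figure~\ref{torus3}, after which the proof is a short, careful diagram chase; the cases $q=2,3$ serve as a useful sanity check.
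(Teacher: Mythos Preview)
Your proposal is correct and follows essentially the same route as the paper, which in fact offers no written proof at all for this corollary: the paper simply refers the reader to Figure~\ref{torus3}, where the diagram $D$, the state $s_BD$, and the resulting Lando graph $C_{2q}$ are drawn side by side. Your write-up is therefore strictly more detailed than the paper's own treatment --- you articulate in words (via the preceding Lemma applied to the open tangle $(\sigma_1\sigma_2)^q$, followed by the closure step) exactly what the figure is meant to convey, and you correctly flag that the only nontrivial point is checking that the braid closure splices $\gamma_{2q}$ to $\gamma_1$ and keeps every chord admissible, which is precisely what the explicit drawing settles.
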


\begin{figure}
\centering
\includegraphics[width = 11cm]{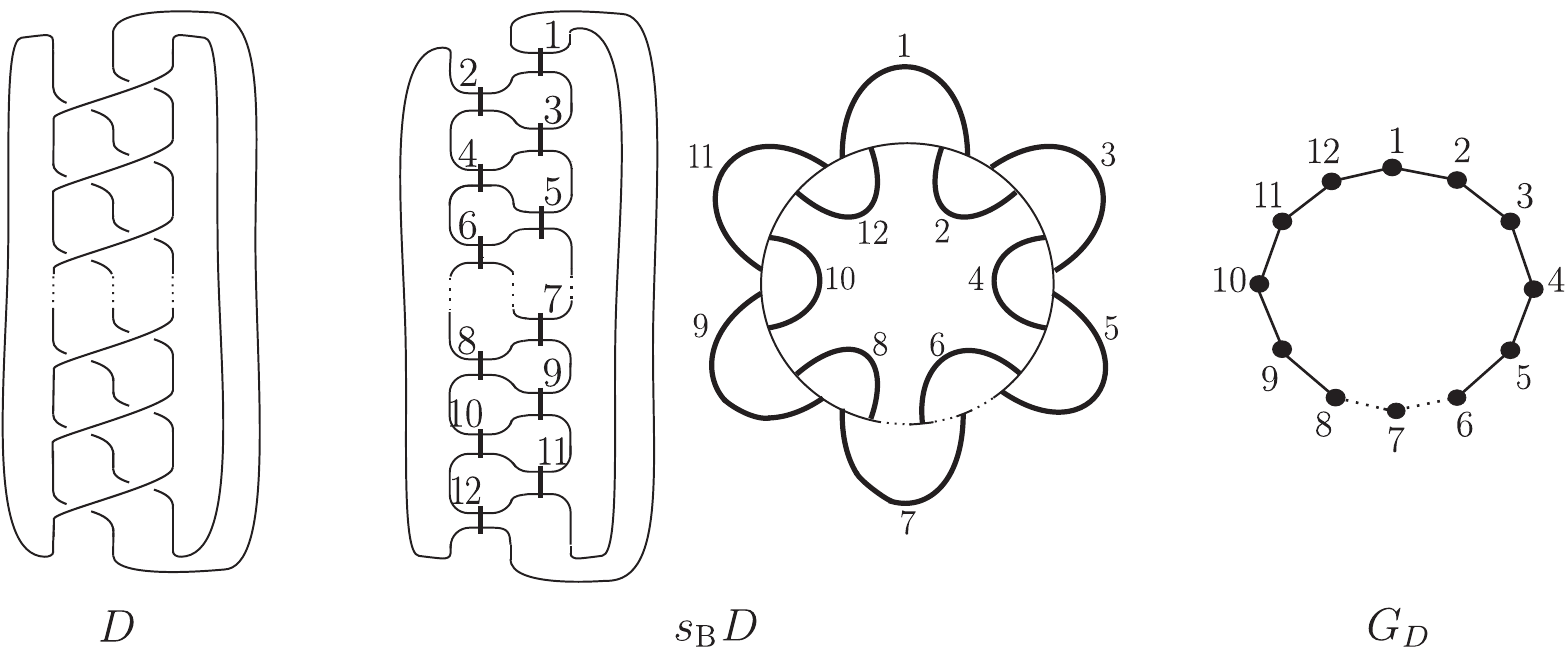}
\caption{\small{A diagram $D$ representing a torus link on 3 strands, its associated $s_BD$ and the Lando graph $G_D$ are shown. The case $k=6$ is shown when considering the dotted lines as if they were solid.}}
\label{torus3}
\end{figure}

In general, determining a closed formula for the real-extreme Khovanov homology groups of torus links, $H^{i,\overline{j}} (T(p,q))$, is an open problem. The case when $p=2$ is well known, as $T(2,q)$ are $B$-adequate, and therefore they have one non-trivial real-extreme Khovanov homology group (the associated Lando graphs $G_{\widehat{\beta}_{2,q}}$ are empty). In the next result we give an answer when $p=3$.

\begin{corollary}\label{cortor}
The real-extreme Khovanov homology groups of torus links on 3 strands are isomorphic to the reduced homology groups of the independence complex of $C_{2q}$. More precisely, $$H^{i,\overline{j}}(T(3,q)) = \tilde{H}_{2q-1-i} (I_{C_{2q}}) = \left\{
  \begin{array}{lll}
     \mathbb{Z}  &  \mbox{if } 2q = 3k \pm 1,  & i = 2q-k, \\
     \mathbb{Z} \oplus \mathbb{Z} &  \mbox{if } 2q = 3k, & i = 2q-k, \\
     0  &  \mbox{otherwise}. &
  \end{array}
  \right. $$
\end{corollary}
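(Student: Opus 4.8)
The plan is to combine Corollary \ref{cortor}'s first equality, which is immediate from Theorem \ref{KeyTheorem} together with Corollary \ref{ciclotoro} (the Lando graph of the classical diagram of $T(3,q)$ is $C_{2q}$, and this diagram has $p=2q$ positive crossings since it is the closure of the positive braid $\beta_{3,q}$, so $H^{i,\overline{j}}(T(3,q)) \approx \tilde H_{p-i-1}(I_{C_{2q}}) = \tilde H_{2q-1-i}(I_{C_{2q}})$), with the explicit homotopy type of $I_{C_{n}}$ recorded in Proposition \ref{propcycle}. So the only real content is translating Proposition \ref{propcycle} into reduced homology groups and bookkeeping the index shift.

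First I would recall from Proposition \ref{propcycle} that $I_{C_n} \sim_h S^{k-1}$ when $n = 3k\pm 1$ and $I_{C_n} \sim_h S^{k-1}\vee S^{k-1}$ when $n = 3k$. Setting $n = 2q$, this gives: if $2q = 3k\pm 1$ then $\tilde H_m(I_{C_{2q}}) = \mathbb{Z}$ for $m = k-1$ and $0$ otherwise; if $2q = 3k$ then $\tilde H_m(I_{C_{2q}}) = \mathbb{Z}\oplus\mathbb{Z}$ for $m = k-1$ and $0$ otherwise. (Here I use that a wedge of spheres has free reduced homology concentrated in the sphere dimensions, one $\mathbb{Z}$ summand per sphere of that dimension — this is standard, e.g.\ from \cite{Hatcher}, and is exactly why the torsion-freeness statements in the Conjecture are meaningful.)

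Next I would perform the index substitution. Since $H^{i,\overline{j}}(T(3,q)) = \tilde H_{2q-1-i}(I_{C_{2q}})$, the homology group in degree $i$ is nonzero precisely when $2q-1-i = k-1$, i.e.\ $i = 2q-k$, and in that case it equals $\mathbb{Z}$ (if $2q = 3k\pm1$) or $\mathbb{Z}\oplus\mathbb{Z}$ (if $2q = 3k$), and it vanishes for all other $i$. This is exactly the displayed formula in the statement, so the proof is complete.

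The main obstacle is essentially nonexistent here — this corollary is a direct bookkeeping consequence of results already established in the paper (Theorem \ref{KeyTheorem}, Corollary \ref{ciclotoro}, Proposition \ref{propcycle}). The only point requiring a moment's care is confirming that the classical torus braid diagram $\widehat\beta_{3,q}$ has all crossings positive so that $p = 2q$ (giving the correct shift $p-i-1 = 2q-i-1$), and noting that for this diagram $j_{\max}(D)$ realizes $\overline{j}(T(3,q))$ — the latter follows because $\tilde H_*(I_{C_{2q}})$ is nontrivial, so the extreme Khovanov complex at $j_{\max}(D)$ has nonzero homology and hence $j_{\max}(D) = \overline{j}(T(3,q))$. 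If one wanted to be scrupulous one could also double-check the parity constraint: $2q$ is even, so the case $2q = 3k$ forces $k$ even and the cases $2q = 3k\pm 1$ determine the sign of the $\pm$ from the residue of $2q$ mod $3$; but none of this affects the statement as written.
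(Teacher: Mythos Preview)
Your proof is correct and follows essentially the same approach as the paper's: combine Theorem \ref{KeyTheorem} and Corollary \ref{ciclotoro} (with $p=2q$ positive crossings) for the first equality, then read off the reduced homology from Proposition \ref{propcycle} and track the index shift $2q-1-i=k-1$. Your explicit observation that nontriviality of $\tilde H_*(I_{C_{2q}})$ forces $j_{\max}(D)=\overline{j}(T(3,q))$ is a point the paper leaves implicit, and is a welcome clarification.
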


\begin{proof}
The first equality holds by considering the classical diagram of $T(3,q)$ given by the closed braid $\beta_{3,q}$ together with Theorem \ref{KeyTheorem} and Corollary \ref{ciclotoro}. The fact that it has $2q$ positive crossings and Proposition \ref{propcycle} complete the proof.
\end{proof}

\begin{remark}
It is worth noting that the independence complex of the graph arising from the closed braid $\widehat{\beta}_{p,q}$ representing the torus link $T(p,q)$ is contractible in many cases, e.g. when $p=4,5,6,7$ and $8$, so the associated extreme Khovanov homology groups are trivial. If Conjecture \ref{conj} holds, then those torus links having torsion in its real-extreme Khovanov homology groups cannot be represented by a diagram $D$ such that $j_{\max}(D) = \overline{j}(L)$. Examples of such knots are $T(5,6)$, $T(5,9)$ and $T(6,7)$ (according to \cite{knotatlas}) and also $T(5,11)$, $T(5,21)$, $T(6,11)$ and $T(7,8)$, checked by Alexander Shumakovitch and $T(8,9)$ checked by Lukas Lewark.

We found very interesting the case of torus links $T(4,q)$ with $q \geq 5$, for which we conjecture (based on Shumakovitch computations) that the real-extreme Khovanov homology groups are just $\mathbb{Z}_2$ (precisely $H^{2q,6q-1}(T(4,q)) = \mathbb{Z}_2$) and that the difference between potential and real-extreme Khovanov homology grades, $j_{\max}(D) - \overline{j}(L)$ for a diagram $D$ of $L$, is not bounded.\footnote{One can conjecture that for torus knots $T(p,q)$ with $p>3$, the real-extreme Khovanov homology converges to a finite abelian group when $q \to \infty$. With much less confidence we can ask whether this limit is the same as real-extreme Khovanov homology of $T(p,p+1)$ (compare with \cite{Gor, Sto, Willis} and with \cite[Conjecture 6.1]{PrzytyckiRadmila}).}
\end{remark}

\subsection{Gaps in extreme Khovanov homology} \label{hthick}

In \cite{GMS} the authors give a procedure for constructing knot diagrams having as many non-trivial extreme Khovanov homology groups as desired (that is, knots as far of being $H$-thin as one wishes); these non-trivial groups are correlative in the sense that there do not exist gaps between them. Based on Examples \ref{gap1} and \ref{gap2}, we show two families of knots whose non-trivial extreme Khovanov homology groups are separated by either one or two gaps as long as desired.

\begin{theorem}\label{theogaps} \
\begin{enumerate}
\item [(1)] For every $n > 0$ there exists an oriented knot diagram $D_n$ with exactly two non-trivial extreme Khovanov homology groups $H^{i_1,j_{max}}(D_n)$ and $H^{i_2,j_{max}}(D_n)$ such that $i_1 - i_2 = n-1$.
\item [(2)] For any $m,n > 0$  there exists an oriented knot diagram $D_{m,n}$ with exactly three non-trivial extreme Khovanov homology groups $H^{i_1,j_{max}}(D_{m,n})$, $H^{i_2,j_{max}}(D_{m,n})$ and $H^{i_3,j_{max}}(D_{m,n})$ such that $i_1 - i_2 = m-1$ and $i_2 - i_3 = n$.
\end{enumerate}
\end{theorem}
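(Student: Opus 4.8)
The plan is to realize these knot diagrams as link diagrams whose associated Lando graphs are exactly the bipartite circle graphs $G_n^k$ and $G_{m,n}^k$ constructed in Examples \ref{gap1} and \ref{gap2}, and then invoke Theorem \ref{KeyTheorem} to translate the computed homotopy types $I_{G_n^k} \sim_h S^{n+k} \vee S^{2n-1+k}$ and $I_{G_{m,n}^k} \sim_h S^{2m+2n+k} \vee S^{m+2n+k+1} \vee S^{m+n+k+1}$ into statements about extreme Khovanov homology groups. First I would recall that, as explained in Subsection \ref{subseclando} (and already remarked when reconstructing a link diagram from a chord diagram by replacing each $B$-chord with a crossing), every bipartite circle graph is the Lando graph of some link diagram $D$. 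So for part (1), starting from the chord diagram $\mathcal{C}_n^k$ of Example \ref{gap1}, I would build the corresponding diagram $D$; its Lando graph is $G_n^k$, and Theorem \ref{KeyTheorem} gives $H^{i,j_{\max}}(D) \approx \tilde H_{p-i-1}(I_{G_n^k})$ where $p$ is the number of positive crossings. Since $I_{G_n^k} \sim_h S^{n+k} \vee S^{2n-1+k}$ is a wedge of two spheres, its reduced homology is nontrivial (and equal to $\mathbb{Z}$) in exactly the two degrees $n+k$ and $2n-1+k$, hence $H^{i,j_{\max}}(D)$ is nontrivial in exactly the two values $i_2 = p-1-(2n-1+k)$ and $i_1 = p-1-(n+k)$, with $i_1 - i_2 = (2n-1+k)-(n+k) = n-1$, as required. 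Part (2) is entirely parallel: use $\mathcal{C}_{m,n}^k$, whose Lando graph is $G_{m,n}^k$ with $I_{G_{m,n}^k} \sim_h S^{2m+2n+k} \vee S^{m+2n+k+1} \vee S^{m+n+k+1}$, a wedge of three spheres, yielding three nontrivial groups in degrees determined by the three sphere dimensions; subtracting the exponents pairwise gives the two gaps $m-1$ and $n$ (the parameter $k$ drops out of the differences, so any $k \geq 0$ works — for instance $k=0$).

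There are two technical points that need care. First, the constructions in Examples \ref{gap1} and \ref{gap2} produce \emph{knot} rather than \emph{link} diagrams: one must check that the reconstructed diagram $D$ has a single component, which can be arranged by choosing the strand-connection pattern appropriately when closing up the chord diagram into a knot diagram (as in the procedure of \cite{GMS}); if a naive reconstruction yields several components one may need to modify the chord diagram slightly without changing the associated graph, using the mutation moves of Chmutov–Lando, or simply absorb extra components into the construction. Second, one should verify that ``exactly two (resp. three) non-trivial extreme Khovanov homology groups'' holds at the level of $H^{i,j_{\max}}(D)$ for the constructed diagram $D$: this is immediate from Theorem \ref{KeyTheorem} together with the fact that a wedge of spheres has reduced homology concentrated precisely in the sphere dimensions, with each summand contributing a free rank-one group — so in particular there is no torsion and the count of nonzero groups equals the number of distinct sphere dimensions (here $2$ and $3$ respectively, since $n+k < 2n-1+k$ for $n > 1$, and $m+n+k+1 < m+2n+k+1 < 2m+2n+k$ for $m,n > 0$).

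I expect the main obstacle to be the bookkeeping in the reconstruction step: passing from the chord diagram to an actual oriented \emph{knot} diagram while keeping track of (a) connectivity of the resulting link and (b) the precise number $p$ of positive crossings and the value $j_{\max}(D)$, so that the indices $i_1, i_2$ (and $i_3$) come out in the claimed form. All of this is routine in spirit — it is the same bookkeeping carried out in \cite{GMS} for the no-gap case — but it must be done carefully enough to confirm that the differences $i_1 - i_2$ and $i_2 - i_3$ are exactly $n-1$ and $n$ (note the asymmetry: one gap is $n-1$, the other is $n$, reflecting the $+1$ shifts in the sphere dimensions of $I_{G_{m,n}^k}$ versus $I_{G_n^k}$), and that the edge cases $n=1$ in part (1) (where the two sphere dimensions $n+k$ and $2n-1+k = n+k$ coincide, so one must instead take $n>1$ or handle $n=1$ separately by a different small example) and $m=n=1$ in part (2) are treated correctly. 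The homotopy-theoretic input — that these independence complexes are wedges of the stated spheres — is already established in Sections \ref{secpermut} and is simply quoted here.
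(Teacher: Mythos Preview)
Your approach is essentially the same as the paper's: the authors take $k=0$, exhibit explicit link diagrams $D_n$ and $D_{m,n}$ (in a figure) whose $s_B$-smoothings are precisely the chord diagrams $\mathcal{C}_n^0$ and $\mathcal{C}_{m,n}^0$ of Examples~\ref{gap1} and~\ref{gap2}, apply Theorem~\ref{KeyTheorem} to read off the two (resp.\ three) nontrivial extreme groups from the wedge-of-spheres homotopy types, and then reduce to a single component exactly via \cite[Remark~14]{GMS} as you anticipate. Your caution about the edge cases is well placed---note in particular that your inequality $m+2n+k+1 < 2m+2n+k$ actually requires $m>1$, not just $m>0$, so the $m=1$ case (not only $m=n=1$) collapses two of the three sphere dimensions; the paper does not address this explicitly either.
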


\begin{proof}
(1) Let $D_n$ be the oriented diagram shown in Figure \ref{diaggaps}. Its associated $s_BD_n$ is the chord diagram $\mathcal{C}_n^0$ depicted in Figure \ref{figgap1}, and therefore $G_{D_n}$ is the graph $G_{n}^0$ in Example \ref{gap1}, with $I_{G_{n}^0} \sim_h S^n \vee S^{2n-1}$.

Following a similar reasoning as in the proof of Corollary \ref{cortor}, Theorem~\ref{KeyTheorem} and the fact that the number of positive crossings of $D_n$ is 5n+1 lead to
$$
H^{i,j_{\max}}(D_n) = \left\{\begin{array}{lll}
     \mathbb{Z}  & & \mbox{if } i = 4n, \\
     \mathbb{Z}  & & \mbox{if } i = 3n+1, \\
     0 & & \mbox{otherwise.}
\end{array}\right.
$$

Finally, note that the number of components of $D_n$ is equal to $n+1$ if $n$ is even or $n+2$ otherwise. By following the construction in \cite[Remark 14]{GMS} the number of components of $D_n$ can be reduced to one in such a way that the resulting knot has the same real-extreme Khovanov homology groups as $D_n$, possibly with some shiftings. \\

(2) The proof is analogous to the one in the previous case by considering the diagram $D_{m,n}$ shown in Figure \ref{diaggaps}, whose associated $s_BD_{m,n}$ is the chord diagram $\mathcal{C}_{m,n}^0$ depicted in Figure \ref{figgap2}, from Example \ref{gap2}. Note that the diagram $D_{m,n}$ is not minimal, as it is equivalent to diagram $D'_{m,n}$ in Figure~\ref{diaggaps}.

\begin{figure}
\centering
\includegraphics[width = 12cm]{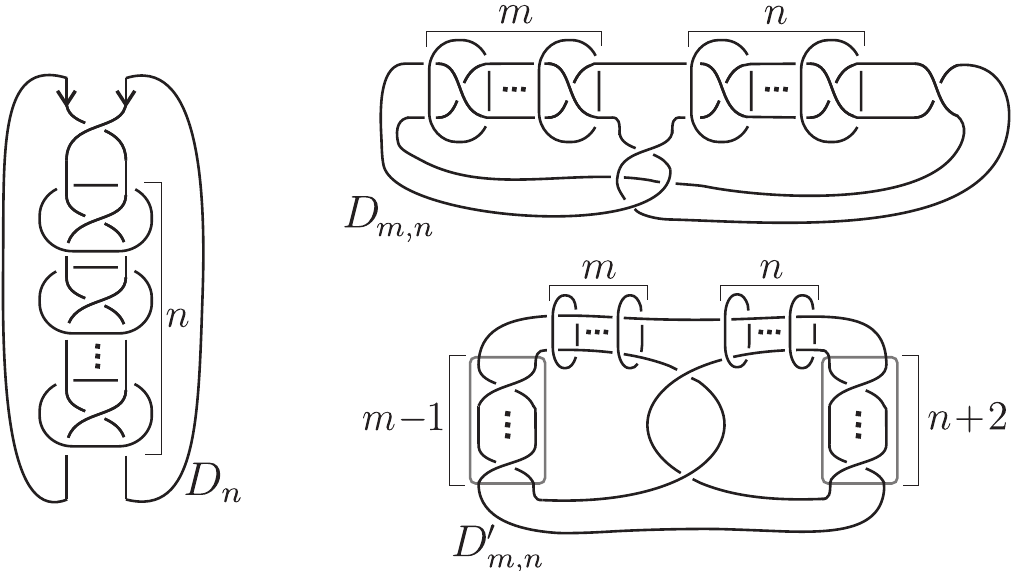}
\caption{\small{The link diagrams $D_n$, $D_{m,n}$ and $D'_{m,n}$ illustrating the proof of Theorem \ref{theogaps}.}}
\label{diaggaps}
\end{figure}

The indices of the homology groups depend on the orientation of $D_{m,n}$. Namely, if one chooses an orientation for the diagram $D_{m,n}$ such that the crossings in the blocks $[m]$ and $[n]$ are positive, then $D_{m,n}$ has $5(m+n)+2$ positive and $3$ negative crossings. From Example \ref{gap2}, $I_{G_{m,n}} \sim S^{2m+2n} \vee S^{m+2n+1} \vee S^{m+n+1}$, and therefore

$$
H^{i,j_{\max}}(D_{m,n}) = \left\{\begin{array}{lll}
     \mathbb{Z}  & & \mbox{if } i=5(m+n)+2-(2m+2n)-1=3(m+n)+1, \\
     \mathbb{Z}  & & \mbox{if } i=5(m+n)+2-(m+2n+1)-1 = 4m+3n, \\
     \mathbb{Z}  & & \mbox{if } i=5(m+n)+2-(m+n+1)-1=4(m+n), \\
     0 & & \mbox{otherwise.}
\end{array}\right.
$$

\end{proof}

For a given link $L$, the ranks of the Khovanov homology groups $H^{i,j}(L)$ can be arranged into a table with columns and rows indexed by $i$ and $j$, respectively. In Figures \ref{table1} and \ref{table2} we present the tables of the ranks and torsions of the Khovanov homology groups of the links represented by diagrams $D_5$ and $D_{3,2}$ respectively, provided kindly by Shumakovitch.

In the particular case $n=5$, according to the proof of Theorem \ref{theogaps}(1), one should get $\mathbb{Z}$ in dimensions 16 and 20, which agrees with the table in Figure \ref{table1}. In the case of $D_{m,n}$, for $m=3$ and $n=2$ one should get $\mathbb{Z}$ in dimensions 16, 18 and 20, as obtained in the table in Figure \ref{table2}.

The computations were possible by writing the links as closed braids. The 7-components link represented by $D_5$ is the closure of the 7-strands braid with 26 positive crossings  $\sigma_2\sigma_1\sigma_3\sigma_2\sigma_4\sigma_3\sigma_5\sigma_4\sigma_6\sigma_5^2\sigma_6\sigma_4\sigma_5\sigma_3\sigma_4\sigma_2\sigma_3\sigma_1\sigma_2\sigma_1^6$. The link given by $D_{3,2}$ is the closure of the 8-strands braid with 32 crossings $\sigma_5^{-1}\sigma_3\sigma_4\sigma_2\sigma_3\sigma_1\sigma_2^2\sigma_1\sigma_3\sigma_2\sigma_4\sigma_3\sigma_4^2\sigma_5^{-1}\sigma_4^4\sigma_6\sigma_5\sigma_4\sigma_7\sigma_6\sigma_5^2\sigma_6\sigma_7^{-1}\sigma_4\sigma_5\sigma_6^{-1}$.

\begin{figure}
\centering
\includegraphics[width = 12cm]{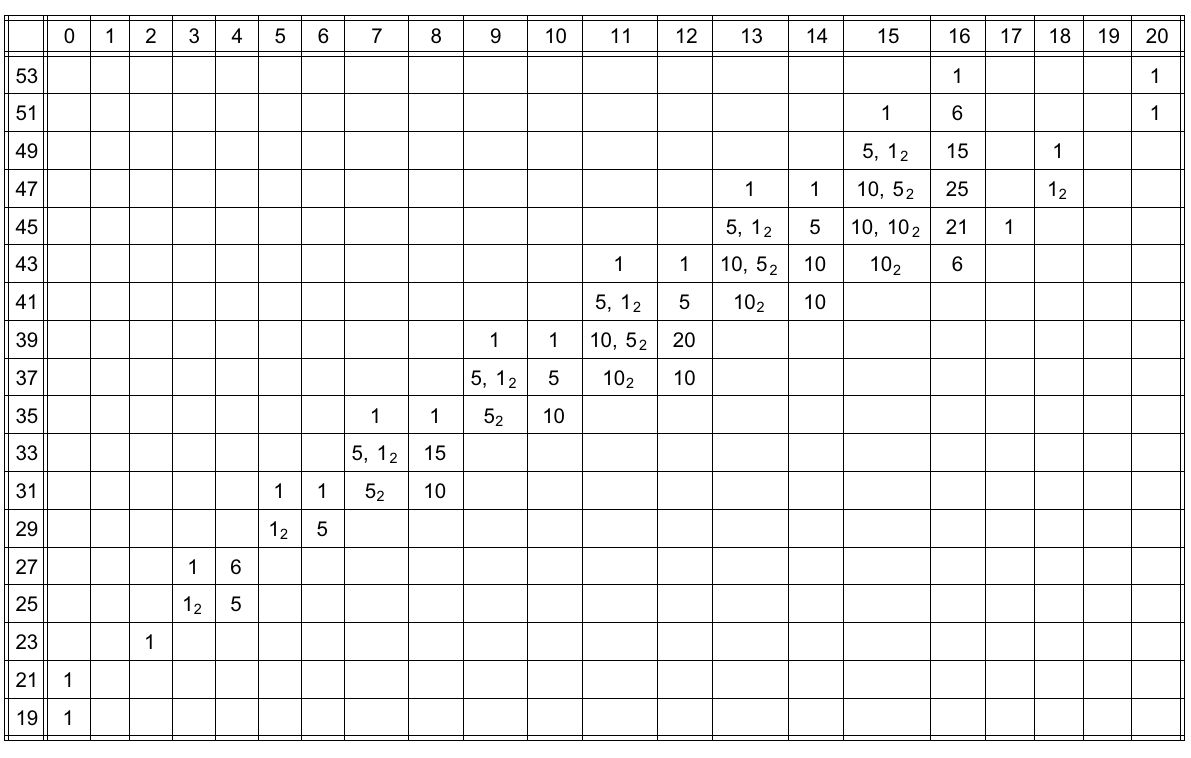}
\caption{\small{Table representing the ranks and torsions of the Khovanov homology groups of the link represented by diagram $D_5$.}}
\label{table1}
\end{figure}

\begin{figure}
\centering
\includegraphics[width = 13.1cm]{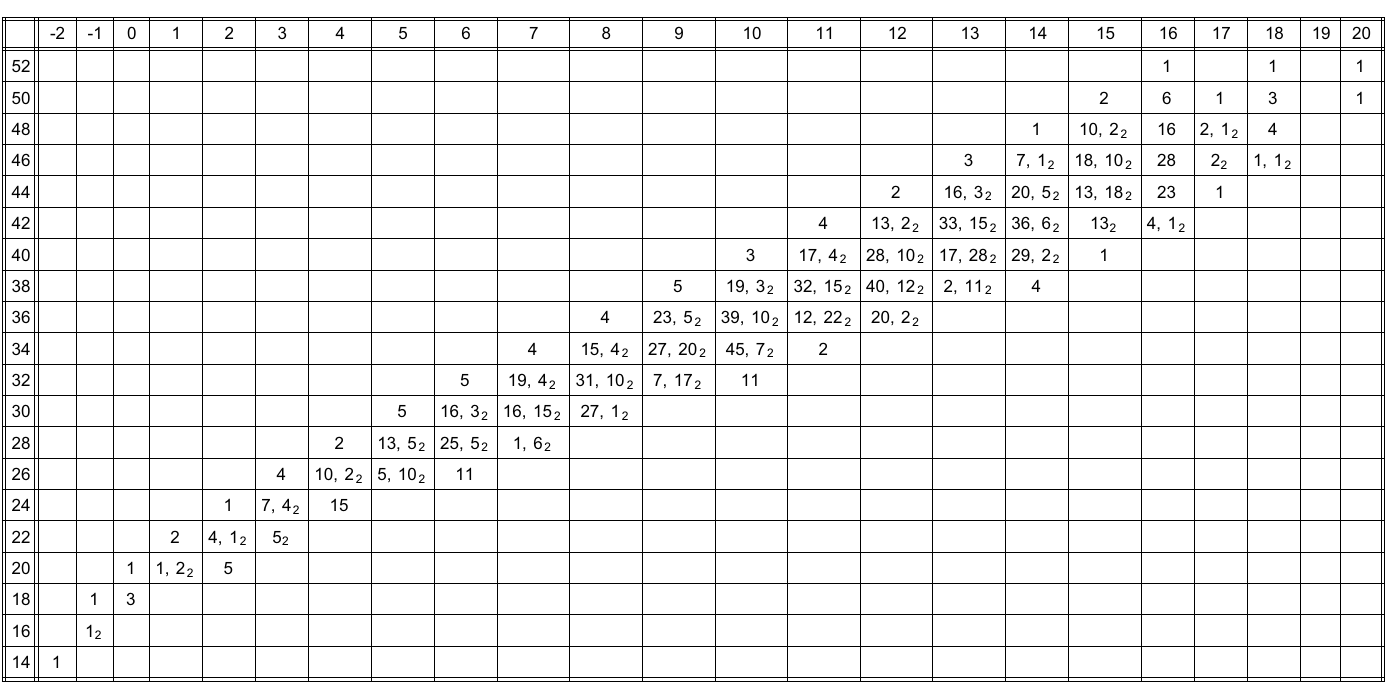}
\caption{\small{Table representing the ranks and torsions of the Khovanov homology groups of the link represented by diagram $D_{3,2}$.}}
\label{table2}
\end{figure}

\vspace{1cm}

\textbf{Aknowledgements} \, J.~H.~Przytycki was partially supported by Simons Collaboration Grant-316446, and M.~Silvero was partially supported by MTM2013-44233-P and FEDER. We would like to thank Micha{\l} Adamaszek and Victor Reiner for many useful discussions. In particular, Reiner helped us with the original version of Subsection \ref{seccone}. The authors are grateful to the Institute of Mathematics of the University of Seville (IMUS) and the Institute of Mathematics of the University of Barcelona (IMUB) for their hospitality.

\vspace{0.8cm}

\noindent J\'ozef H. Przytycki\\
Department of Mathematics\\
The George Washington University\\
University of Gda\'nsk\\
{\tt przytyck@gwu.edu}\\ \ \\

\noindent Marithania Silvero\\
Departamento de \'Algebra\\
Universidad de Sevilla\\
{\tt marithania@us.es}

\end{document}